\newcommand{\N}{{\mathbf N}}                   
\newcommand{\Z}{{\mathbf Z}}                   
\newcommand{\R}{{\mathbf R}}                   
\newcommand{\C}{{\mathbf C}}                   
\newcommand{\CP}{\mathbf{P}^1}                
\renewcommand{\H}{{\mathbf H}}                   
\newcommand{\Ct}{\widehat{\mathbf C}}          
\newcommand{\CPt}{\widehat{\mathbf{P}}^1}     
\newcommand{\E}{E}                  		
\newcommand{\F}{F}              	    
\newcommand{\G}{G}		                  
\newcommand{\Vt}{\widehat{V}}			
\newcommand{\Et}{\widehat{E}}		        
\newcommand{\Ft}{\widehat{F}}		       	
\newcommand{\Pt}{\widehat{P}}        		
\newcommand{\Dt}{\widehat{D}}        		
\renewcommand{\tt}{\widehat{\theta}}                   
\newcommand{\htr}{\widehat{h}}        		
\newcommand{\nablat}{\widehat{\nabla}}		
\renewcommand{\O}{{\mathcal O}}                 
\newcommand{\Nahm}{{\mathcal N}}              
\renewcommand{\d}{\mbox{d}}                      
\newcommand{\gl}{\mathfrak{gl}}                
\newcommand{\h}{{\mathfrak h}}
\newcommand{\e}{e}			
\newcommand{\f}{f}			
\newcommand{\g}{g}			
\newcommand{\dbar}{\bar{\partial}}
\DeclareMathOperator{\res}{res}
\DeclareMathOperator{\Gl}{Gl}
\DeclareMathOperator{\Gr}{Gr}
\DeclareMathOperator{\diag}{diag}
\DeclareMathOperator{\Id}{Id}
\DeclareMathOperator{\coker}{coker}
\DeclareMathOperator{\im}{im}
\DeclareMathOperator{\rank}{rk}
\DeclareMathOperator{\red}{red}
\DeclareMathOperator{\dR}{dR}
\DeclareMathOperator{\Dol}{Dol}
\DeclareMathOperator{\para}{par}
\newtheorem{prop}{Proposition}[section]
\newtheorem{assn}[prop]{Assumption}
\newtheorem{rk}[prop]{Remark}
\newtheorem{clm}[prop]{Claim}
\newtheorem{lem}[prop]{Lemma}
\newtheorem{defn}[prop]{Definition}
\newtheorem{cor}[prop]{Corollary}
\newtheorem{thm}[prop]{Theorem}
\newtheorem{example}{Example}
\title[Nahm transformation]
{Nahm transformation for parabolic integrable connections on the projective line --- case of generic regular graded residues}
\author{Szil\'ard Szab\'o}
\begin{document}

\maketitle

\begin{abstract}
We give a de Rham interpretation of Nahm's transform for certain parabolic harmonic bundles on the projective line 
and compare it to minimal Fourier--Laplace transform of $\mathcal{D}$-modules. We give an algebraic definition 
of a parabolic structure on the transformed bundle and show that it is compatible with the transformed harmonic metric. 
\end{abstract}

\section{Introduction}
In this paper we continue our study of Nahm transformation for some singular solutions of Hitchin's equations \cite{Hit} on the complex projective line 
with finitely many logarithmic singularities and one singularity of Poincar\'e rank (Katz-invariant) $1$. 
These solutions of Hitchin's equations were studied (in higher degree of generality) by O. Biquard and P. Boalch \cite{Biq-Boa}, 
generalizing the work of C. Simpson \cite{Sim}.

Let $\C \subset \CP$ denote the complex affine and projective lines, endowed with the Euclidean metric, 
and with standard holomorphic coordinate denoted by $z\in \C$. 
We consider a parabolic harmonic bundle $(V, F_i^j , \dbar^{\E}, D , h)$ on $\CP$ with logarithmic singularities at some fixed points 
$z_1,\ldots,z_n\in\C$ and a second-order pole with semi-simple leading order term at infinity. 
Let $\Ct$ be a different copy of the complex affine line with coordinate $\zeta$, and $\CPt$ the associated projective line. 
The aim of this paper is to construct a transformed 
harmonic bundle $(\Vt, \dbar^{\Et} , \tt , \htr)$ on $\CPt$ and study the properties of the mapping 
\begin{equation}\label{eq:Nahm}
    \Nahm: (V, F_i^j, \dbar^{\E}, \nabla , h) \mapsto (\Vt, \Ft_i^j, \dbar^{\Et} ,\nablat , \htr),
\end{equation}
called Nahm transformation, on moduli spaces. 

In the case where the residues of $D$ at the singular points are semisimple, the transform was defined in \cite{Sz-these}, 
and its properties were further studied in \cite{Sz-laplace}, \cite{A-Sz}, \cite{Sz-plancherel}. 
Part of the construction was extended to the general case in \cite{Sz-FM34}. 
Namely, in \cite{Sz-FM34} we provided a construction of the parabolic Higgs bundle underlying the transformed solution. 
For this purpose, we extended the Fredholm-theory and the Dolbeault hypercohomology interpretation of the first 
$L^2$-cohomology of a twisted elliptic complex. 

In this paper, after giving some background material on parabolic harmonic bundles in Section \ref{sec:harmonic-bundles} and on 
Nahm transform for parabolic Higgs bundles in Section \ref{sec:Nahm}, we define in Section \ref{sec:transformed-connection} the 
transformed flat connection and harmonic metric. 
Then, in Section \ref{sec:dR} we give a de Rham hypercohomology interpretation of the twisted elliptic complex, 
which then leads the way to an algebraic construction of the transformed parabolic structure carried out in Section \ref{sec:transformed-parabolic}. 
Section \ref{sec:Laplace} (and in particular, Theorem \ref{thm:main}) provides an identification between Nahm transformation and 
Fourier--Laplace transformation of $\mathcal{D}$-modules studied for instance in \cite{Mal} and \cite{Sab-isomonodromy}, 
and in Corollary \ref{cor:dR-isom} we prove that Nahm transformation is a holomorphic isomorphism between de Rham moduli spaces. 
The generic transformation of the singularity parameters under a regularity and a suitable genericity assumption on the eigenvalues 
of the residues is given in Section \ref{sec:transformation}. 
Finally, in Section \ref{sec:parabolic} we show that the transformed harmonic metric $\htr$ is compatible with the parabolic structure 
in the sense that local sections of the various pieces of the filtered vector bundles have well controlled local behaviour near the 
parabolic points. 

The topic of this paper is closely related to recent work \cite{DPS} by R. Donagi, T. Pantev and C. Simpson on push-forward of 
logarithmic parabolic Higgs bundles under a map from a projective surface endowed with a divisor of some special type to a curve, 
using C. Sabbah's work \cite{Sab-twistor} on twistor $\mathcal{D}$-modules and its extension \cite{Moc} by T. Mochizuki's to the parabolic case. 
Indeed, Nahm transformation can be roughly defined in the spirit of other integral transforms, as a composition of 
the following functors: pull-back to a product surface, tensor by a rank $1$ solution, and push-forward with respect to the second projection map. 
It turns out that transforming the compatible parabolic structure requires most of the ideas here, in particular one needs a direct image functor 
for parabolic structures, which is precisely the content of \cite{DPS}. 
In the paper \cite{DPS} the authors assume a nilpotence hypothesis for the residue of the Higgs field. 
Curiously, this appears to be more or less complementary to our assumptions (see Proposition \ref{prop:par_str} and Remark \ref{rem:Assumptions}). 
It seems a natural guess that combining our techniques with those of \cite{DPS} it would be possible to lift at least some of our assumptions.  
We plan on returning to such a generalization of this paper in a joint work with T. Mochizuki.

\section{Parabolic harmonic bundles}\label{sec:harmonic-bundles}

In this section we recall generalities about some singular solutions of Hitchin's equations on a curve. 

Let $C$ be a complex analytic curve. We denote by $\O_C$ and $K_C$ its structure sheaf and its canonical sheaf respectively, 
and by $\Omega^k$ the sheaf of locally $L^2$ differential $k$-forms on $C$. 

Let $V$ be a smooth vector bundle over $C$ of rank $r\geq 2$ and $\E$ be a holomorphic vector bundle with underlying smooth vector bundle $V$. 
The space of local sections of $\E$ may be conveniently described as the kernel of a partial differential operator $\dbar^{\E}$ 
of type $(0,1)$ on $V$. Let 
$$
  \nabla : \E \to \E \otimes_{\O_C} K_C
$$
be a (possibly meromorphic) connection on $\E$. Namely, we assume that $\nabla$ satisfies the Leibniz rule: 
for any open set $U \subset C$ and $f\in \O (U), \e \in \E(U)$ we have 
$$
  \nabla (f \e ) = (\d f) \e + f \nabla (\e ),
$$
where $\d$ denotes exterior differential. 
The couple $(\E, \nabla)$ is then called an integrable bundle. 
We also set 
$$
  D = \dbar^{\E} + \nabla, 
$$
which is a flat connection on $V$. The data $(\E, \nabla)$ determines and is uniquely determined by $(V, D)$. 
Let $h$ be a smooth fibrewise Hermitian metric on $V$. 
Split $D$ as 
$$
  D = D^+ + \Phi
$$
with $D^+$ unitary and $\Phi$ self-adjoint with respect to $h$, and split these operators further with respect to bidegree: 
\begin{align*}
  D^+ & = \partial^h + \bar{\partial}^{\mathcal E}  \\
  \Phi & = \theta + \theta^*, 
\end{align*}
with $\partial^h, \theta$ of type $(1,0)$ and $\bar{\partial}^{\mathcal E}, \theta^*$ of type $(0,1)$, and where $\theta^*$ is the 
adjoint of $\theta$ with respect to $h$. 
With these notations, $(V, D , h)$ is called a harmonic bundle if and only if 
\begin{equation}\label{eq:theta-holomorphic}
  \dbar^{\mathcal{E}} \theta = 0, 
\end{equation}
i.e. $\theta$ is a holomorphic (or meromorphic) $K$-valued endomorphism. 

From now on, we let $(V, D , h)$ (or equivalently, $(E, \nabla, h)$) denote a harmonic bundle over $\CP$ with some singularities. 
We will now spell out explicitly our assumptions on its singularities, 
as well as the definition of a compatible parabolic structure $F_i^j$. 
Fix distinct points $z_1,\ldots,z_n\in\C$, and denote by  $z_0= [0:1] \in \CP$ the point at infinity. 
Let $\E$ be given the structure of a quasi-parabolic bundle on $\C$ with parabolic points 
$z_0,z_1,\ldots,z_n$, i.e. for every $i\in\{0,\ldots,n\}$ a decreasing filtration of $\C$-vector 
subspaces of the fiber of $V$ at $z_i$
\begin{equation}\label{eq:parfiltr}
	\{ 0\} = F_i^{l_i} \subset F_i^{l_i-1} \subset \cdots \subset F_i^1 \subset F_i^0 = V_{z_i}
\end{equation}
of some length $1\leq l_i\leq r$ is given. 
For $i\in\{0,\ldots,n\},j\in\{ 0,\ldots,l_i-1\}$ denote the graded vector spaces associated to (\ref{eq:parfiltr}) by 
\begin{equation}\label{eq:pargr}
	\Gr_{i}^j = \Gr_{F_i}^j = F_i^j/F_i^{j+1}.
\end{equation}
We fix parabolic weights $\{ \beta_i^j \}$ for $i\in\{0,\ldots,n\}, j\in\{ 0,\ldots,l_i-1\}$ satisfying 
\begin{equation}\label{eq:parweights}
	1> \beta_i^{l_i-1} > \cdots > \beta_i^0 \geq 0. 
\end{equation}
For every $0 \leq i \leq n$ we will take a local holomorphic trivialisation $\{ \e_i^s\}_{s=1}^r$ of $\E$ near $z_i$ 
compatible with the filtration $F_i^j$ in the sense that $F_i^j$ is spanned by the evaluations at $z_i$ of the vectors 
$$
  \e_i^1, \ldots, \e_i^{\dim F_i^j}. 
$$
With respect to such a compatible basis, we will use the diagonal matrix 
$$
  \diag(\beta_i^j )_{j=0}^{l_i-1}
$$
consisting of the parabolic weights, each $\beta_i^j$ repeated with multiplicity equal to $\dim \Gr_{i}^j$. 
We assume that $\nabla$ is a logarithmic connection in $E$ over $\C$ with logarithmic singularities at $z_1, \ldots ,z_n$ 
and an irregular singularity of Katz-invariant $1$ at infinity, compatible with the parabolic structure. 
By compatibility in the logarithmic case we mean that the residue 
\begin{equation}\label{eq:residue}
	\res_{z_i}(\nabla ) = \nabla 
	((z-z_i)\partial_z)
\end{equation}
of $\nabla$ at $z_i$ preserves the filtration $F_i^{\bullet}$: 
\begin{equation}\label{eq:logHiggsfield}
	\res_{z_i}( \nabla ) : F_i^j \to F_i^j
\end{equation}
for every $i\in\{1,\ldots,n\}, j\in\{ 0,\ldots,l_i-1\}$ . 
At the singularity $z_0$, in a suitable local holomorphic trivialisation of $E$ we require that $\nabla$ is given by a convergent Laurent series
\begin{equation}\label{eq:Dinfinity}
 \nabla = \d^{1,0} + A \d z + C \frac{\d z}z + O(z^{-2}) \d z, 
\end{equation}
where $A, C \in \gl_r (\C )$ are matrices of dimension $r$ preserving the filtration $F_0^{\bullet}$, 
and moreover such that $A$ is semi-simple. 
Without loss of generality, we may then assume that $A$ is diagonal, and we denote by $\Pt$ the set of eigenvalues of $A$. 
Let us denote by $H$ the centraliser of $A$ in $\Gl_r(\C)$  and by $\h$ its Lie-algebra. 
Then up to applying a holomorphic gauge transformation near $\infty$ we can arrange that $C\in\h$; 
in what follows we will therefore assume $C\in\h$. 

By compatibility, $\res_{z_i}(\nabla )$ acts on the spaces (\ref{eq:pargr}). Let us denote by $\res_{z_i}(\nabla )^j$ this action and let 
$$
	\res_{z_i}(\nabla )^j = S_i^j  + N_i^j 
$$
be its decomposition into its semi-simple and nilpotent components respectively. 
We may (and henceforth will) assume that the compatible trivialisations $\{ \e_i^s\}_{s=1}^r$ are chosen so that $S_i^j$ are diagonal for each $i,j$. 
The generalized eigenspaces of $S_i^j$ then define a block-decomposition of $\Gr_i^j$. 
To each such block there corresponds a single eigenvalue of $\res_{z_i}(\nabla )^j$, and the eigenvalues are different on different blocks. 

Now, there exists a weight filtration $W_{i,\bullet}^j$ of $\Gr_{F_i}^j$ associated to $N_i^j$; 
for a list of its (well-known) properties, see \cite{Sz-FM34}. 
We will work with local trivializations $\{ \e_i^s\}_{s=1}^r$ of $\E$ near $z_i$ that preserve these filtrations too. 
Namely, for any vector $\e_i^s \in F_i^j \setminus F_i^{j+1}$ we require 
$$
  N_i^j (\e_i^s (z_i ) ) = \e_i^{s+1} (z_i ) 
$$
unless $\e_i^s(z_i )\in \ker (N_i^j )$. 

The connection form of $\nabla$ with respect to the local analytic trivialization $\e_i^s$ near $z_i$ is of the form 
\begin{equation}\label{eq:Aiz}
  \frac{A_i(z)}{z-z_i} \d z 
\end{equation}
for some $\gl_r(\C)$-valued analytic function $A_i$. 
In addition, up to applying an analytic gauge transformation, we may assume that the off-diagonal entries $a_{i,s's}$ of $A_i$ 
identically vanish for all $s,s'$ corresponding to eigenvalues $\mu_i^s,\mu_i^{s'}$ of $A_i(0)$ satisfying $\mu_i^s - \mu_i^{s'}\notin \Z$. 
This can be proved along the lines of Propositions 2.11 and 2.13 \cite{Sab-isomonodromy}: indeed, the equation for the corresponding entry of the 
$l$'th coefficient $P_l$ of the local analytic gauge transformation to solve is 
$$
  (\mu_i^s - \mu_i^{s'} + l ) P_{l,s's} = * 
$$
for some polynomial expression of the entries of the previous coefficients $P_1,\ldots ,P_{l-1}$ and of $P_{l,r'r}$ for $|r'-r| < |s'-s|$ on the right-hand side; 
this recursion is solvable for any $l$ by the assumption $\mu_i^s - \mu_i^{s'}\notin \Z$ and the solution is convergent for the same reason as the one found in [{\it op. cit.}]. 
An important observation however is that $A_i$ may have strictly upper diagonal block entries with respect to the filtration $F_i$: indeed, for 
$\e_i^s \in F_i^j, \e_i^{s'} \in F_i^{j'}$ with $j' > j$ such that the corresponding eigenvalues $\mu_i^s,\mu_i^{s'}$ 
differ by an integer, the entry $a_{i,s's}$ may be non-zero. 

We assume given a Hermitian metric $h$ in $\E$ in some neighborhood of $z_i$ ($i\in\{1,\ldots,n\}$) compatible with $(E,\nabla )$; 
for the definition of compatibility, and the relationship between the singularity parameters of $\nabla$ and $\theta$ see \cite{Sim} 
(or \cite{Sz-FM34}). 
From now on we let $(V, F_i^j, \dbar^{\E}, \nabla , h)$ denote a harmonic bundle on $\CP$ with parabolic structure and admissible harmonic metric, 
and singularity behaviour fixed as above. We now make important assumptions necessary to carry out our construction. These assumptions are 
less stringent as the ones appearing in Definitions 1.1 and 1.2 \cite{Sz-laplace}. 
\begin{assn}\label{assn:main}
 For any $i,j$
 \begin{enumerate}
  \item \label{assn:main0}
  if $i=0$ then no eigenvalue $\mu_i^s$ of $\res_{z_i}(\nabla )^j$ is an integer;
  \item \label{assn:main1}
   for $i > 0$ if $\beta_i^0=0$ then $S_i^0$ has no non-zero integer eigenvalue and the nilpotent part $N_i^0$ of the endomorphism $\res_{z_i}(\nabla )^0$ 
   acts trivially on the $0$-eigenspace of $\res_{z_i}(\nabla )^0$; 
 \item \label{assn:main2} 
 for $i > 0$ if $\beta_i^j > 0$ then $\res_{z_i}(\nabla )^j$ has no integer eigenvalue.
  \end{enumerate}
\end{assn}

In particular, in view of the previous paragraph and part (\ref{assn:main2}), for $i > 0$ any entry $a_{i,s's}$ of the connection matrix $A_i(z)$ 
corresponding to a pair of indices $s,s'$ such that $\beta_i^{j(s')} = 0 = \mu_i^{s'}$ and $\beta_i^{j(s')} > 0$ (or vice versa) identically vanishes. 

In what follows we will call singular (or parabolic) harmonic bundle a tuple $(V, F_i^j, \dbar^{\E}, \nabla , h)$ 
satisfying the equations and local behaviours fixed in this section.

\section{Construction of Nahm transformation}\label{sec:Nahm}
In this section we briefly explain the part of the construction of the Nahm transform of a parabolic harmonic bundle 
$(V, F_i^j, \dbar^{\E}, D , h)$ over $\CP$ 
carried out in \cite{Sz-FM34}. 

With the above notations, given any $\zeta \in \Ct \setminus \Pt$ we define a twisted flat connection on $\CP$ by 
\begin{equation}\label{Dzeta}
  D_{\zeta} = D - \zeta \d z,
\end{equation}
and consider the twisted $L^2$ elliptic complex 
\begin{equation}\label{eq:elliptic-complex-Dzeta}
 	0\to V\xrightarrow{D_{\zeta}}  V\otimes \Omega^1 \xrightarrow{D_{\zeta}}  V\otimes \Omega^2 \to 0, 
\end{equation}
where the $L^2$ conditions are defined using the Euclidean metric on $\C$ and the fiber metric $h$. 
The cohomology spaces of this complex of degrees $0$ and $2$ vanish for all $\zeta \in \Ct \setminus \Pt$, 
and its cohomology spaces of degree $1$ are of constant finite dimension. 
Hodge theory then provides an equivalent characterization of $\Vt_{\zeta}$ as the space of $V$-valued harmonic $1$-forms, 
i.e. $1$-forms annihilated by the twisted Dirac--Laplace operator 
\begin{equation}\label{eq:Dirac-Laplace}
   \Delta_{\zeta} = D_{\zeta} D_{\zeta}^* + D_{\zeta}^* D_{\zeta}. 
\end{equation}
It follows that the first $L^2$-cohomology spaces of (\ref{eq:elliptic-complex-Dzeta}) form a smooth vector bundle 
$\Vt\to \Ct \setminus \Pt$. The vector bundle $\Vt$ is the smooth vector bundle underlying the Nahm transform of 
$(V, F_i^j, \dbar^{\E}, D , h)$. 
A Dolbeault resolution of suitable locally free sheaves of $\O_{\CP}$-modules $\mathcal{F}, \mathcal{G}$ 
yields yet another description of $\Vt_{\zeta}$ as the first hypercohomology space of a complex 
$$
	\mathcal{F} \xrightarrow{\theta} \mathcal{G} \otimes K_{\CP}(2\cdot z_0 + z_1 + \cdots + z_n). 
$$
This identification equips $\Vt$ with the structure of a holomorphic vector bundle $\widehat{\mathcal E}$ 
and with a natural extension of $\Vt$ to $\CPt$. 
Using the parabolic filtrations $F_i^j$, we then refine the definitions of $\mathcal{F}$ and $\mathcal{G}$ in the complex 
above in order to define a transformed parabolic structure $\Ft_i^j$. 
Moreover, multiplication by $-z/2 \d \zeta$ induces an endomorphism-valued $1$-form $\widehat{\theta}$ of $\widehat{\mathcal E}$. 
According to Theorem 6.2 of \cite{Sz-FM34}, the outcome of these constructions is a parabolic Higgs bundle 
\begin{equation}\label{eq:transformed-Higgs}
   (\widehat{\mathcal E}, \widehat{\theta}, \Ft_i^j )
\end{equation}
over $\CPt$, with $\widehat{\theta}$ having a first-order pole at the points of $\Pt$ and a second-order pole at 
$\infty \in \CPt$ with semi-simple leading-order term, endowed with a compatible parabolic structure $\Ft_i^j$ 
at the points $\Pt \cup \{ \infty \}$.

\section{Transformed flat connection and Hermitian metric}\label{sec:transformed-connection}

In this section we will construct a transformed flat connection $\Dt$ and Hermitian metric $\htr$ on $\Vt$ over $\Ct \setminus \Pt$, 
which completes (\ref{eq:transformed-Higgs}) into an irregular harmonic bundle, as in \cite{Biq-Boa}. 
These constructions agree with the ones given in Section 3.1 of \cite{Sz-these}. 

Let us start by defining $\htr$. Let  $\zeta \in \Ct \setminus \Pt$ be arbitrary and consider $\hat{f}_1, \hat{f}_2 \in \Vt_{\zeta}$. 
As explained in Section \ref{sec:Nahm}, an element $\hat{f} \in \Vt_{\zeta}$ can be uniquely represented by a $1$-form 
$\hat{f}(z) \d z + \hat{g}(z) \d \bar{z}$ with values in $V$ in the kernel of the operator (\ref{eq:Dirac-Laplace}) and such that 
$h(\hat{f}(z),\hat{f}(z))$ is integrable over $\Ct$ 
with respect to the Euclidean norm. For two such elements it is natural to set 
\begin{equation}\label{eq:transformed_metric}
   \htr (\hat{f}_1, \hat{f}_2) = \int_{\C}   h(\hat{f}_1(z),\hat{f}_2(z)) + h(\hat{g}_1(z),\hat{g}_2(z)) | \d z |^2 
\end{equation}
This formula defines a Hermitian metric on $\Vt_{\zeta}$, and as these subspaces vary smoothly within the space $L^2(\C, V\otimes \Omega^1 )$ of all 
square-integrable $V$-valued $1$-forms, it follows that the formula above yields a smooth fiber metric over the bundle $\Vt \to \Ct \setminus \Pt$. 

Let us now come to the construction of $\Dt$. For this purpose, introduce the orthogonal projection operator 
$$
  \hat{\pi}_{\zeta} : L^2(\C, V\otimes \Omega^1 ) \to \Vt_{\zeta}, 
$$
and let $\widehat{\d}$ denote the trivial connection on the trivial Hilbert-space bundle 
\begin{equation}\label{eq:Hilbert-bundle}
  L^2(\C, V\otimes \Omega^1 ) \times \Ct \setminus \Pt \to \Ct \setminus \Pt. 
\end{equation}
With this notation, we set 
\begin{equation}\label{eq:Dt}
 \Dt = \hat{\pi}_{\zeta}  \circ (\widehat{\d} - z \d \zeta) 
\end{equation}
where the operator $z$ acts on $\hat{f}$ by multiplying it by $z$. 

\begin{thm}
 The connection $\Dt$ defines a flat connection on $\Vt$ and $\htr$ is a harmonic metric for $\Dt$. 
\end{thm}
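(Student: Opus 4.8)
The plan is to realise $\Dt$ as the compression to the subbundle $\Vt$ of a \emph{flat} connection on the trivial Hilbert bundle \eqref{eq:Hilbert-bundle}, and then to read off the harmonic metric data from the decomposition of the ambient operator. First I would record that \eqref{eq:Dt} does define a connection: the orthogonal projection $\hat{\pi}_{\zeta}$ onto the smooth subbundle $\Vt\subset L^2(\C,V\otimes\Omega^1)$ is $\O$-linear and smooth in $\zeta$, while $\widehat{\d}-z\,\d\zeta$ obeys the Leibniz rule, so their composite does too; here one needs that multiplication by $z$ preserves the $L^2$-condition on harmonic representatives, which follows from the decay estimates of Section~\ref{sec:Nahm} and \cite{Sz-FM34}. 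Next, the operator $\widehat{\d}-z\,\d\zeta$ is itself flat: its curvature is $\widehat{\d}(-z\,\d\zeta)=-(\widehat{\d}z)\wedge\d\zeta-z\,\widehat{\d}\,\d\zeta=0$, because $z$ is a multiplication operator not depending on $\zeta$ and $\d\zeta$ is closed. Consequently the curvature of $\Dt$ is computed by the second fundamental form: for a local section $\hat{f}$ of $\Vt$ one has $F_{\Dt}\hat{f}=-\hat{\pi}_{\zeta}(\widehat{\d}-z\,\d\zeta)\bigl((1-\hat{\pi}_{\zeta})(\widehat{\d}-z\,\d\zeta)\hat{f}\bigr)$.

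The heart of the flatness statement is that this expression vanishes, which rests on two identities. On the one hand, if $\hat{f}_{\zeta}\in\Vt_{\zeta}$, i.e. $\hat{f}_{\zeta}$ is annihilated by $\Delta_{\zeta}$, then $D_{\zeta}\hat{f}=0$; differentiating this in $\zeta$ and $\bar{\zeta}$ and using $D_{\zeta}(z\hat{f})=(\d z)\wedge\hat{f}+z\,D_{\zeta}\hat{f}$ together with $\partial_{\zeta}D_{\zeta}=-\d z\wedge(\cdot)$ shows that both $(\partial_{\zeta}-z)\hat{f}$ and $\partial_{\bar{\zeta}}\hat{f}$ are $D_{\zeta}$-closed $1$-forms. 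Since a $D_{\zeta}$-closed $L^2$ $1$-form that is orthogonal to $\ker\Delta_{\zeta}$ is $D_{\zeta}$-exact --- this is the Hodge decomposition for the Fredholm complex \eqref{eq:elliptic-complex-Dzeta}, using the vanishing of its cohomology in degrees $0$ and $2$ --- the component $(1-\hat{\pi}_{\zeta})(\widehat{\d}-z\,\d\zeta)\hat{f}$ lies in the image of $D_{\zeta}$ acting on $V$. On the other hand, the operators $\partial_{\zeta}-z$ and $\partial_{\bar{\zeta}}$ commute with $D_{\zeta}$ (again by the identities just used), so $\widehat{\d}-z\,\d\zeta$ sends $D_{\zeta}$-exact forms to $D_{\zeta}$-exact forms; hence the argument of the outer $\hat{\pi}_{\zeta}$ above is $D_{\zeta}$-exact and is killed by $\hat{\pi}_{\zeta}$. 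Therefore $F_{\Dt}=0$.

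For the harmonic metric I would decompose the ambient twist as $-z\,\d\zeta=\bigl(-\tfrac{z}{2}\,\d\zeta+\tfrac{\bar{z}}{2}\,\d\bar{\zeta}\bigr)+\bigl(-\tfrac{z}{2}\,\d\zeta-\tfrac{\bar{z}}{2}\,\d\bar{\zeta}\bigr)$, the first summand being anti-self-adjoint and the second self-adjoint for the $L^2$-pairing, since the adjoint of multiplication by $z$ is multiplication by $\bar{z}$. Because $\widehat{\d}$ is $L^2$-unitary, $\htr$ is the restriction of the $L^2$-metric \eqref{eq:transformed_metric} to $\Vt$, and $\hat{\pi}_{\zeta}$ is $L^2$-orthogonal, compressing this splitting yields the decomposition $\Dt=\Dt^{+}+\tt+\tt^{*}$ in which $\Dt^{+}$ is the $\htr$-unitary part, $\tt^{*}$ is the $\htr$-adjoint of $\tt$, and $\tt=\hat{\pi}_{\zeta}\circ\bigl(-\tfrac{z}{2}\,\d\zeta\bigr)\big|_{\Vt}$ --- exactly the transformed Higgs field of \eqref{eq:transformed-Higgs}. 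In view of the defining condition \eqref{eq:theta-holomorphic} for a harmonic bundle, it then remains to verify the single equation $\dbar^{\Et}\tt=0$, where $\dbar^{\Et}$ is the $(0,1)$-part of $\Dt^{+}$, namely $\hat{\pi}_{\zeta}\circ(\partial_{\bar{\zeta}}+\tfrac{\bar{z}}{2})\big|_{\Vt}$; this is the holomorphicity of the transformed Higgs field established in Theorem~6.2 of \cite{Sz-FM34}, once one matches $\dbar^{\Et}$ with the Dolbeault operator on $\widehat{\mathcal E}$ of Section~\ref{sec:Nahm}. I expect the analytic underpinnings to be the real obstacle rather than this algebra: one must justify that $z\hat{f}$ is square-integrable near the irregular point at infinity, that the Hodge decomposition and the computation of adjoints involve no boundary contributions at the parabolic points, and that the $L^2$ and Dolbeault holomorphic structures on $\Vt$ coincide --- all of which are supplied by the Fredholm and Hodge theory developed in \cite{Sz-FM34}, so that the argument follows the pattern of Section~3.1 of \cite{Sz-these}; alternatively, $\dbar^{\Et}\tt=0$ can be checked directly through a Green's-operator computation in the style of \cite{Biq-Boa}.
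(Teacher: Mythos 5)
Your argument is correct and is essentially the one the paper invokes by citation: the paper's proof of this theorem consists only of a reference to Proposition 3.5 and Theorem 4.9 of \cite{Sz-these}, where flatness is likewise obtained by showing that the second fundamental form of $\Vt$ inside the flat Hilbert bundle (\ref{eq:Hilbert-bundle}) takes values in the $D_{\zeta}$-exact $1$-forms (hence is annihilated by $\hat{\pi}_{\zeta}$), and harmonicity by splitting $-z\,\d\zeta$ into its skew-adjoint and self-adjoint parts and identifying $\tt=\hat{\pi}_{\zeta}\circ\bigl(-\tfrac{z}{2}\,\d\zeta\bigr)$ with the transformed Higgs field of (\ref{eq:transformed-Higgs}). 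The analytic caveats you list at the end --- square-integrability of $z\hat{f}$ near the irregular point, absence of boundary contributions in the Hodge decomposition, and smooth $\zeta$-dependence of the Green's operator needed to differentiate the exact primitives --- are indeed where the substance lies, and they are precisely what the Fredholm theory of \cite{Sz-FM34} and \cite{Sz-these} supplies.
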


\begin{proof}
 See Proposition 3.5 and Theorem 4.9 \cite{Sz-these}. 
\end{proof}

\section{de Rham interpretation}\label{sec:dR}

Parallel to the Dolbeault interpretation of Section 4 \cite{Sz-FM34}, there is also a de Rham hypercohomology 
interpretation of the transform in terms of a complex of $\C$-vector spaces 
\begin{equation}\label{eq:dRcopml}
	\F \xrightarrow{\nabla_{\zeta}} \G \otimes K_{\CP}(2\cdot z_0 + z_1 + \cdots + z_n). 
\end{equation}
In this section we will write down this interpretation. 

For this purpose, we merely need to suitably define the sub-sheaves $F,G \subset E$ 
so that (\ref{eq:dRcopml}) admit an $L^2$ resolution for the Euclidean metric. 
The sub-sheaves $F,G$ will be defined as elementary transforms of $E$ along some sub-spaces of the fibers of $E$ at 
the points $z_0, z_1,\ldots,z_n$. In particular, they are defined so that they agree with $E$ away from these points. 
We now come to writing down local frames $\{ \f_i^s\}_{s=1}^r$ of $F$ and $\{ \g_i^s\}_{s=1}^r$ of $G$ near the parabolic points 
in terms of local frames $\{ \e_i^s\}_{s=1}^r$ compatible with the filtrations $F_i^{\bullet}$ and $W_{i,\bullet}^j$ 
and with respect to which the semi-simple part $S_i^j$ of $\res_{z_i}{\nabla }^j$ is diagonal, as in Section \ref{sec:harmonic-bundles}. 
We introduce the following notations: for any $1\leq s \leq r$ we let $j_i(s)$ stand for the only $0\leq j \leq l_i - 1$ satisfying 
$$
  \e_i^s \in F_i^j \setminus F_i^{j+1}, 
$$
and $k_i(s)$ for the only integer $k$ such that 
$$
  \e_i^s \in W_{i,k}^j \setminus W_{i,k-1}^j. 
$$
For ease of notation we will drop the subscript $i$ of $j_i(s)$ and $k_i(s)$; hopefully, this will cause no misunderstanding. 
We begin by spelling out a frame of $G$ in the case $i>0$; the formulas are analogous to the ones of Section 4 \cite{Sz-FM34} in 
the case of the Dolbeault interpretation: 
\begin{enumerate}
\item Case $\beta_i^{j(s)}=0$
\begin{enumerate}
	\item sub-case $k(s) < -1$: set $\g_i^s=\e_i^s$
	\item sub-case $k(s)\geq -1$: set $\g_i^s=(z-z_i)\e_i^s$
\end{enumerate}
\item Case $\beta_i^{j(s)}> 0$: set $\g_i^s=\e_i^s$. 
\end{enumerate}
We now come to a local frame of $G$ near $z_0$: 
\begin{enumerate}
\item Case $\beta_0^{j(s)}=0$
\begin{enumerate}
	\item sub-case $k(s) < -1$: set $\g_0^s=z^{-1}\e_0^s$
	\item sub-case $k(s)\geq -1$: set $\g_0^s=z^{-2}\e_0^s$
\end{enumerate}
\item Case $\beta_i^{j(s)}> 0$: set $\g_0^s=z^{-1}\e_0^s$. 
\end{enumerate}
Next, let us denote by $\mu_i^s$ the eigenvalue of $\res_i(\nabla)^{j(s)}$ corresponding to the eigenvector $\e_i^s$. 
With this notation, we define in the case $i>0$: 
\begin{enumerate}
\item Case $\beta_i^{j(s)}=0$
\begin{enumerate}
	\item sub-case $\mu_i^{s}=0$ (and as a consequence necessarily $\res_{z_i}^{j(s)}(e_i^s(z_i))=0
											$ by Assumption \ref{assn:main} (\ref{assn:main1})): set $\f_i^s = \e_i^s$
	\item sub-case $\mu_i^{s}\neq 0, k(s)<-1$: set $\f_i^s = \e_i^s$
	\item sub-case $\mu_i^{s}\neq 0, k(s)\geq -1$: set $\f_i^s = (z-z_i)\e_i^s$
\end{enumerate}
\item Case $\beta_i^{j(s)}>0$: set $\f_i^s = \e_i^s$. 
\end{enumerate}
Finally, we define $\F$ near $z_0$ by the local frame $\f_0^s = \g_0^s$ that we have already defined above. 

An important observation is that according to their definitions, $\F$ is naturally a subsheaf of $\G(z_1+\cdots + z_n)$. 
This will play a role in the next statement, as it is possible to add a multiple of the identity map of $\F$ to $\nabla$. 
Namely, we set 
\begin{equation}\label{eq:twisteddRmap}
   \nabla_{\zeta} = \nabla - \zeta \Id_{\E}\d z,
\end{equation}
where $\Id_{\E}$ stands for the identity map of $\E$ (and its restriction to the sub-sheaves of $\E$). 

We then have the following analogue of Proposition 4.1 \cite{Sz-FM34}. 
\begin{prop}\label{prop:L2dR}
For every $\zeta\in \Ct \setminus \Pt$, the fiber $\Vt_{\zeta}$ is isomorphic to the first hypercohomology space of the 
following twisted de Rham complex 
\begin{equation}\label{eq:twisteddRcopml}
	\F \xrightarrow{\nabla_{\zeta}} \G\otimes K_{\CP}(2\cdot z_0 + z_1+\cdots + z_n).
\end{equation}
\end{prop}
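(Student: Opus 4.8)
The plan is to follow the strategy of Proposition 4.1 of \cite{Sz-FM34}, replacing the Dolbeault resolution by a de Rham (Dolbeault-for-$\nabla$, i.e. $C^\infty$ with $D_\zeta$) resolution, and showing that the hypercohomology of \eqref{eq:twisteddRcopml} computes the first $L^2$-cohomology of the elliptic complex \eqref{eq:elliptic-complex-Dzeta}. First I would set up a fine resolution of the two-term complex \eqref{eq:twisteddRcopml}: resolve $\F$ by the Dolbeault complex $\mathcal{A}^{0,\bullet}(\F)$ and $\G\otimes K_{\CP}(2z_0+z_1+\cdots+z_n)$ by $\mathcal{A}^{1,\bullet}$ of the same bundle, so that the total complex of the resulting double complex has terms $\mathcal{A}^{0,0}(\F)$, $\mathcal{A}^{0,1}(\F)\oplus\mathcal{A}^{1,0}(\G(\cdots))$, $\mathcal{A}^{1,1}(\G(\cdots))$, with differentials assembled from $\dbar^{\F}$, $\nabla_\zeta$ and $\dbar^{\G}$. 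By the standard hypercohomology spectral sequence (or a Dolbeault-type lemma), the cohomology of this total complex in degree $1$ is $\mathbb{H}^1$ of \eqref{eq:twisteddRcopml}.

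The core of the argument is then a comparison of this total complex with the $L^2$ elliptic complex \eqref{eq:elliptic-complex-Dzeta}. Away from $z_0,\dots,z_n$, $\F=\G=\E$ and $\nabla_\zeta=D_\zeta$, so the two complexes literally agree there; all the content is local analysis at the parabolic points. The key step is to show that the natural map from sections of the total complex to $L^2$-sections of \eqref{eq:elliptic-complex-Dzeta} (viewing a $C^\infty$ section of $\F$, resp. of $\G\otimes K(\cdots)$, as a measurable $V$-valued section, resp. $1$-form) is a quasi-isomorphism. This amounts to checking, frame by frame near each $z_i$, that the chosen generators $\f_i^s,\g_i^s$ encode exactly the growth/decay weights that make a local holomorphic section of $\F$ (resp. a local section of $\G\otimes K(\cdots)$) land in $L^2$ for the Euclidean metric and the admissible harmonic metric $h$ — and conversely that every $L^2$-class is represented this way. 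Here one uses the relationship between the parabolic weights $\beta_i^j$, the eigenvalues $\mu_i^s$ of the graded residues, and the $h$-norm asymptotics $|\e_i^s|_h\sim |z-z_i|^{\beta_i^{j(s)}}$ (up to $\log$ powers governed by $k(s)$), exactly as recalled in Section \ref{sec:harmonic-bundles}; Assumption \ref{assn:main} is what guarantees there are no borderline integer-eigenvalue cases that would obstruct the $L^2$ matching (this is the same mechanism that already worked in \cite{Sz-FM34}).

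Concretely I would argue: (i) the vanishing of $H^0$ and $H^2$ of the $L^2$ complex (already stated in Section \ref{sec:Nahm}) forces the hypercohomology to be concentrated in degree $1$ and of the same dimension, so it suffices to produce a single natural map and check it is injective or surjective; (ii) establish the local $L^2$-integrability of frame sections $\f_i^s$ and of $\g_i^s\,dz$ and the local $L^2$-solvability of $\nabla_\zeta$ and $\dbar$ near each $z_i$ with the prescribed weights, which gives local acyclicity of the resolution in $L^2$; (iii) conclude by a Mayer–Vietoris/patching argument gluing the local statements at $z_0,\dots,z_n$ with the trivial statement on $\C\setminus\{z_0,\dots,z_n\}$. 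The treatment of $z_0$, where the pole is irregular of Katz rank $1$, requires using the block structure coming from the eigenvalues $\Pt$ of $A$ and the centraliser $H$, and the shift by $z^{-2}$ (resp. $z^{-1}$) in the definition of $\g_0^s,\f_0^s$ to compensate the $\zeta\,dz$ twist; this is where I expect the main technical obstacle, since one must control the interplay of the exponential factor $e^{(\text{eigenvalue of }A)\,z-\zeta z}$ with the Euclidean $L^2$ condition and with the logarithmic term $C\,dz/z$, and verify that the resulting asymptotics are precisely those built into the frame. Since, however, the $L^2$ Fredholm theory and the local models at $z_0$ were already worked out in \cite{Sz-these} and \cite{Sz-FM34}, and \eqref{eq:twisteddRcopml} differs from the Dolbeault complex of \cite{Sz-FM34} only by the replacement $\theta\leadsto\nabla_\zeta$ (which affects neither the underlying sheaves' $L^2$-adaptedness nor the local cohomology computation), the verification reduces to transcribing those local computations with $\nabla_\zeta$ in place of $\theta$, plus the observation $\F\subset\G(z_1+\cdots+z_n)$ that makes \eqref{eq:twisteddRmap} well defined.
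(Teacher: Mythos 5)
Your proposal is correct and follows essentially the same route as the paper, whose proof simply asserts that the twisted de Rham complex admits an acyclic resolution by sheaves of $L^2$ sections (for the Euclidean metric and $h$) and concludes by acyclicity; your frame-by-frame verification at the $z_i$ that the generators $\f_i^s,\g_i^s$ encode exactly the $L^2$ growth conditions is precisely the ``straightforward computation'' the paper leaves implicit.
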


\begin{proof}
A straightforward computation shows that the twisted de Rham complex admits a resolution by sheaves of $L^2$ 
sections (in the domain of $\nabla_{\zeta}$ whenever this condition is applicable). 
As these sheaves are acyclic, the proof follows. 
\end{proof}

Proposition \ref{prop:L2dR} allows us to endow $\Vt$ with the structure of a holomorphic vector bundle $\Et$ over $\Ct \setminus \Pt$. 
Namely, using $\pi_j$ for the projection morphism to the $j$'th factor in $\CP \times \CPt$ (and on its various open subsets), we may set 
\begin{equation}\label{eq:extension}
   \Et = \mathbf{R}^1 (\pi_2)_* \left( \pi_1^* \F \xrightarrow{\nabla_{\zeta}} \pi_1^*\G\otimes K_{\CP}(2\cdot z_0 + z_1+\cdots + z_n) \right). 
\end{equation}
Then, as $\nabla_{\zeta}$ depends analytically on $\zeta$, $\Et$ naturally inherits the structure of a coherent analytic sheaf 
of $\O_{\Ct\setminus \Pt}$-modules. Now, as the dimensions of the fibers $\Et_{\zeta}$ of $\Et$ is independent of $\zeta$, it follows that 
$\Et$ is the sheaf of sections of an analytic vector bundle over $\Ct \setminus \Pt$. 
Moreover, comparing this to the definition (\ref{eq:Dt}) of $\Dt$, one immediately sees that the two constructions 
are compatible in the sense that the $\dbar$-operator $\Dt^{(0,1)}$ annihilates precisely the sections of the holomorphic bundle $\Et$. 
Indeed, the $(0,1)$-part of $\Dt$ is induced by the trivial $\dbar$-operator in the bundle of Hilbert spaces (\ref{eq:Hilbert-bundle}) 
with respect to the variable $\zeta$, and the holomorphic structure of $\Et$ is also induced by the trivial holomorphic structure of 
$\pi_1^*\F, \pi_1^*\G$ with respect to $\zeta$.

\section{Extension of the transformed vector bundle and transformed parabolic structure}\label{sec:transformed-parabolic}

The description of $\Vt_{\zeta}$ given in Proposition \ref{prop:L2dR} in terms of a twisted de Rham complex allows us to 
extend the smooth vector bundle $\Vt$ over the points $\Pt \cup \infty$ where its $L^2$-theoretic definition fails to exist. 
To be precise, we can extend the holomorphic bundle $\Et$. 
Moreover, using the same kind of ideas it is possible to endow the extension with a parabolic structure. 
The purpose of this section is to spell out this extension and parabolic structure.  
The content of this section closely follows Section 5 of \cite{Sz-FM34}. 

\subsection{Extension of the transformed vector bundle over $\CPt$}
We define the extension of $\Et$ to $\Ct$ simply by the formula (\ref{eq:extension}). As $\nabla_{\zeta}$ depends analytically (in fact, even algebraically) 
on $\zeta$, this obviously defines a coherent analytic sheaf on $\Ct$. 

Let us now define the extension at $\infty$. We consider the global sections $s_0, s_{\infty} \in H^0(\CPt, \O_{\CPt}(1))$ such that on the affine chart $\Ct \subset \CPt$ we have 
$$
  s_0 (\zeta ) = \zeta, \quad s_{\infty} (\zeta ) = 1. 
$$
Let $\pi_i$ stand for the projection from $ \CP \times \CPt$ to its $i$'th factor and $\Id_{\E}$ the sheaf morphism induced by the identity of $\E$. 
We now modify (\ref{eq:twisteddRcopml}) into 
\begin{equation}\label{eq:nabla-extension}
   \nabla_{\zeta} = \nabla \otimes s_{\infty} - \Id_{\E}\d z \otimes s_0 : \pi_1^*\F \to \pi_1^*\G\otimes K_{\CP}(2\cdot z_0 + z_1+\cdots + z_n) \otimes \pi_2^*  \O_{\CPt}(1). 
\end{equation}
This is then clearly a holomorphic deformation of $\nabla$ parametrized by $\CPt$, in particular its index is constant over $\CPt$. 
\begin{prop}\label{prop:hypercohomology}
 The hypercohomology groups of degree $0$ and $2$ of (\ref{eq:nabla-extension}) vanish for all $\zeta \in \CPt$. 
\end{prop}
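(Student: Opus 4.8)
The plan is to reduce the statement about the complex \eqref{eq:nabla-extension} over all of $\CPt$ to the already-established vanishing over the affine chart $\Ct\setminus\Pt$, and then to handle the finitely many remaining points (those of $\Pt$ and the point $\infty$) by a direct local analysis near each parabolic point $z_i$ of $\CP$. First I would recall that for $\zeta\in\Ct\setminus\Pt$ the two bundles $\mathcal F,\mathcal G$ were rigged precisely so that \eqref{eq:twisteddRcopml} computes $\Vt_\zeta$ via an $L^2$-resolution (Proposition \ref{prop:L2dR}), and the $L^2$-elliptic-complex description \eqref{eq:elliptic-complex-Dzeta} has vanishing cohomology in degrees $0$ and $2$; since \eqref{eq:nabla-extension} restricts on $\Ct\setminus\Pt$ to \eqref{eq:twisteddRcopml} after trivializing $\pi_2^*\O_{\CPt}(1)$ by the nonvanishing section $s_\infty$, the desired vanishing holds there. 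So the content is entirely at $\zeta\in\Pt\cup\{\infty\}$.

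Next I would treat the degree-$0$ vanishing, i.e. injectivity of $\nabla_\zeta\colon \F\to\G\otimes K_{\CP}(2z_0+z_1+\cdots+z_n)$ as a map of global sections over $\CP$. A global section in the kernel is a global holomorphic section of $\F$ (a subsheaf of a modification of $\E$) annihilated by the meromorphic connection $\nabla-\zeta\,\d z$ (appropriately weighted by $s_0,s_\infty$). Away from the singular points such a section is $\nabla$-flat for the twisted connection; the point is that a nonzero global flat section would contradict the irregular type at $z_0$ — the leading term $A\,\d z$ with $A$ semisimple and (after the twist) eigenvalues $\mu-\zeta$, which for $\zeta=\infty$ degenerates but in the $s_0$-normalization becomes the $-\Id\,\d z$ term, still forcing exponential growth incompatible with being a section of the chosen lattice $\F$ near $z_0$. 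For $\zeta\in\Pt$ finite, the twisted irregular type $A-\zeta\Id$ simply has one eigenvalue equal to $0$ and the rest nonzero; the flat sections along the nonzero-eigenvalue directions are still exponentially large, and along the zero-eigenvalue subspace one uses the subleading term $C\,\d z/z$ together with Assumption \ref{assn:main}\eqref{assn:main0} (no integer eigenvalue of the residue) and the precise choice of $\f_0^s$ to rule out a holomorphic flat section of $\F$. At the logarithmic points $z_1,\dots,z_n$ one checks, exactly as in Section 4 of \cite{Sz-FM34}, that the local frames $\f_i^s$ were chosen so that $\nabla_\zeta$ has no local kernel in $\F$; this is a block-by-block computation on the graded pieces $\Gr_i^j$ using the semisimple/nilpotent decomposition $\res_{z_i}(\nabla)^j=S_i^j+N_i^j$ and Assumption \ref{assn:main}\eqref{assn:main1}--\eqref{assn:main2}.

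For the degree-$2$ vanishing, i.e. surjectivity of $\nabla_\zeta$ on $\CP$ (equivalently $H^1$ of the target minus the image vanishes), I would argue by a local-to-global / cokernel computation: by the five-term exact sequence for hypercohomology it suffices to show that the sheaf cokernel $\mathcal Q_\zeta=\coker(\nabla_\zeta)$ has $H^1(\CP,\mathcal Q_\zeta)=0$, and that the contribution of $H^0$ of the kernel and cokernel sheaves is accounted for. Because $\nabla_\zeta$ is an isomorphism of sheaves away from $z_0,\dots,z_n$, the cokernel $\mathcal Q_\zeta$ is a skyscraper supported on $\{z_0,\dots,z_n\}$, hence automatically $H^1(\CP,\mathcal Q_\zeta)=0$; so degree-$2$ vanishing reduces to the (already obtained) degree-$0$ vanishing together with an Euler-characteristic bookkeeping that is constant in $\zeta$ by the holomorphic-deformation remark following \eqref{eq:nabla-extension}. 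Concretely: the index of the complex is locally constant on $\CPt$, it equals $-\dim\Vt_\zeta$ on the dense open set $\Ct\setminus\Pt$ where $H^0=H^2=0$, hence $H^0-H^1+H^2$(with signs) has the right value everywhere, and since we have independently shown $H^0=0$ at the special points and $H^2=0$ there for skyscraper reasons, the dimension of $H^1$ at the special points is forced to equal $\dim\Vt_\zeta$, which is what the extension requires; the vanishing in degrees $0$ and $2$ is exactly the stated proposition.

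The main obstacle I expect is the local analysis at $z_0$ for the boundary values $\zeta\in\Pt$ and $\zeta=\infty$: one must check that the specific elementary-modification recipe defining $\F$ and $\G$ near $z_0$ (the $z^{-1}$ versus $z^{-2}$ normalizations indexed by the weight filtration level $k(s)$) still forces triviality of the local kernel and a skyscraper cokernel once the irregular type is twisted — which is the worst case because the twist by $\zeta$ interacts with the leading matrix $A$ and, at $\zeta=\infty$, the roles of the $s_0$- and $s_\infty$-terms in \eqref{eq:nabla-extension} swap the dominant part of the connection. I would handle this by working in the formal/multisummable normal form of $\nabla-\zeta\,\d z$ at $z_0$, separating the $H=\mathrm{centralizer}(A)$-block (governed by $C\in\h$ and the residue eigenvalues, controlled by Assumption \ref{assn:main}\eqref{assn:main0}) from the transversal blocks (purely irregular, hence no $L^2$ or holomorphic-lattice flat sections), mirroring Propositions 2.11 and 2.13 of \cite{Sab-isomonodromy} and Section 5 of \cite{Sz-FM34}; the logarithmic points are comparatively routine and go exactly as in \cite{Sz-FM34}.
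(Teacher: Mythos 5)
Your degree-$0$ argument reaches the right conclusion by essentially the paper's route: a nonzero class in $\H^0$ is a global $\nabla_{\zeta}$-flat section of $\F$, which spans a $\nabla$-invariant line whose residue at $z_0$ would have to be both $0$ and of the form $\mu_0^s+k$ with $k\in\Z$, contradicting Assumption \ref{assn:main} (\ref{assn:main0}); at $\zeta=\infty$ the differential degenerates to the injective map $-\Id_{\E}\,\d z$. Note that this single argument covers all of $\Ct$ at once, so no separate treatment of $\zeta\in\Pt$ versus $\zeta\in\Ct\setminus\Pt$ is needed. Your side claim that the frames $\f_i^s$ force $\nabla_{\zeta}$ to have \emph{no local kernel} at the logarithmic points $z_1,\dots,z_n$ is actually false in general (under Assumption \ref{assn:main} (\ref{assn:main1}) the $0$-eigenspace of the residue carries genuine local flat sections of $\F$), but this is harmless because only the behaviour at $z_0$ is used to kill global flat sections.

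The degree-$2$ part contains a genuine gap. For a two-term complex of sheaves of $\C$-vector spaces on $\CP$ the low-degree exact sequence of the hypercohomology spectral sequence reads
\begin{equation*}
  H^0(\CP,\coker\nabla_{\zeta})\longrightarrow H^2(\CP,\ker\nabla_{\zeta})\longrightarrow \H^2 \longrightarrow H^1(\CP,\coker\nabla_{\zeta})\longrightarrow 0 ,
\end{equation*}
so the skyscraper property of $\coker\nabla_{\zeta}$ only kills the rightmost term; it says nothing about $H^2(\CP,\ker\nabla_{\zeta})$, which lives on the compact real surface $\CP$ and is governed by the coinvariants of the monodromy of the local system of flat sections. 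Your reasoning, if valid, would give $\H^2=0$ for \emph{any} connection, with no hypothesis on the residues; but for the untwisted trivial connection $\d:\O_{\CP}\to K_{\CP}$ the cokernel sheaf vanishes identically (Poincar\'e lemma) while $\H^2=H^2(\CP,\C)=\C\neq 0$. The subsequent index bookkeeping cannot rescue this: constancy of the index together with $h^0=0$ only yields $h^1-h^2=\dim\Vt_{\zeta}$ and does not force $h^2=0$. What is actually needed — and what the paper means by ``the case of degree $2$ can be treated similarly'' — is the argument dual to the degree-$0$ one: a nonzero class in $\H^2$ produces a rank-one flat \emph{quotient} of $(\E,\nabla_{\zeta})$, which is excluded by the same residue-eigenvalue analysis at $z_0$ via Assumption \ref{assn:main} (\ref{assn:main0}) (compare the sub- \emph{and} quotient-module exclusions in Proposition \ref{prop:min-FL}).
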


\begin{proof}
 A non-trivial degree $0$ hypercohomology class is represented by a global holomorphic section $e\in \Gamma (\CP , F)$ 
 that is parallel with respect to $\nabla_{\zeta}$. Let us first treat the case of $\zeta \in \Ct$. Then, $\nabla$ preserves 
 the subsheaf $L \subset F$ generated by $e$. Namely, we have $\nabla (e) = \zeta e \d z$. This implies that $e$ is an eigenvector 
 both for the leading-order term $A$ and for the residue $C$ of $\nabla$ at infinity, see (\ref{eq:Dinfinity}). 
 In addition, the corresponding eigenvalue of $C$ is clearly $0$. 
 On the other hand, it is well-known that the residue must be of the form $\mu_i^s + k$ for some $1\leq s \leq r$ 
 and some integer $k\in \Z$. This contradicts Assumption \ref{assn:main} (\ref{assn:main0}), hence the hypercohomology 
 group of degree $0$ is trivial for $\zeta \in \Ct$. 
 For $\zeta = \infty$, (\ref{eq:nabla-extension}) reduces to $-\Id_{\E}\d z$, and a degree $0$ hypercohomology class is 
 represented by a global section $e\in \Gamma (\CP , F)$ annihilated by $\Id_{\E}$. As the stalks at $\infty$ of the sheaves 
 $F,G$ agree, the only such section is $e=0$ near $\infty$. Then, by analytic contnuation, $e=0$ on $\CP$.   
 
 The case of degree $2$ can be treated similarly. 
\end{proof}

We introduce the symbol $\pi_2$ for the projection onto the second factor 
$$
   \pi_2: \CP \times \CPt \to \CPt.
$$
Let us now denote by $\dR_{\bullet}$ the family (\ref{eq:nabla-extension}) of complexes of $\C$-vector spaces over $\CP$ 
parametrized by $\zeta \in \CPt$ and set 
\begin{equation}\label{eq:transformed_bundle}
   \Et = \R^1 (\pi_2)_* \dR_{\bullet} . 
\end{equation}

\begin{prop}\label{prop:locally_free}
 The sheaf of $\O_{\CPt}$-modules $\Et$ is locally free, with fiber over $\zeta \in \CPt$ given by $\H^1 (\dR_{\zeta})$. 
\end{prop}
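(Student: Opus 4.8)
The plan is to deduce local freeness from the constancy of the fiber dimensions, using the by-now-standard base-change machinery for hypercohomology of a flat family of two-term complexes. First I would invoke Proposition \ref{prop:hypercohomology}, which tells us that $\H^0(\dR_\zeta) = \H^2(\dR_\zeta) = 0$ for every $\zeta \in \CPt$; since the complex (\ref{eq:nabla-extension}) is a holomorphic family over $\CPt$ with proper (in fact, fixed) support $\CP$ in the first factor, the Euler characteristic $\chi(\dR_\zeta) = \dim \H^0 - \dim \H^1 + \dim \H^2$ is locally constant in $\zeta$ by semicontinuity together with the constancy of the index already noted after (\ref{eq:nabla-extension}). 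Combining these two facts, $\dim \H^1(\dR_\zeta) = -\chi(\dR_\zeta)$ is locally constant, hence constant on the connected curve $\CPt$.

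Next I would set up the cohomology-and-base-change argument. Writing the two-term complex $\dR_\bullet$ as $\pi_1^*\F \xrightarrow{\nabla_\zeta} \pi_1^*\G \otimes K_{\CP}(2z_0 + z_1 + \cdots + z_n) \otimes \pi_2^*\O_{\CPt}(1)$, one replaces it (locally over the base) by a quasi-isomorphic complex of $\pi_2$-acyclic sheaves — e.g. a Dolbeault-type or \v{C}ech-type resolution flat over $\CPt$, exactly as is done in Section 5 of \cite{Sz-FM34} for the Dolbeault picture — so that $\R^i(\pi_2)_* \dR_\bullet$ is computed by a bounded complex $K^\bullet$ of locally free (or at least flat) $\O_{\CPt}$-modules whose formation commutes with base change. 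Then for each point $\zeta$ and each $i$ there is the base-change map, and $\R^i(\pi_2)_*\dR_\bullet \otimes k(\zeta) \to \H^i(\dR_\zeta)$; vanishing of $\H^0$ and $\H^2$ for all $\zeta$ forces, via the standard exact-sequence bookkeeping (Grothendieck's exchange property), that $\R^0(\pi_2)_*\dR_\bullet = \R^2(\pi_2)_*\dR_\bullet = 0$ and that $\R^1$ is the only surviving cohomology of $K^\bullet$. A complex of flat modules with cohomology concentrated in a single degree has that cohomology flat, so $\Et = \R^1(\pi_2)_*\dR_\bullet$ is flat; being also coherent (noted already after (\ref{eq:extension}), and clear on $\CPt$ by properness of $\pi_2$), it is locally free. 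Finally, flatness of $\Et$ makes the base-change map $\Et \otimes k(\zeta) \to \H^1(\dR_\zeta)$ an isomorphism for every $\zeta$, which is the asserted description of the fibers.

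The one point requiring genuine care — and the main obstacle — is the construction of the $\pi_2$-acyclic resolution of $\dR_\bullet$ that is simultaneously flat over $\CPt$ and behaves well at the parabolic points $z_0, z_1, \ldots, z_n$ where $\F$ and $\G$ differ from $\E$ and where $\nabla_\zeta$ has poles. In the analytic category one cannot simply quote the projective-morphism base-change theorem; instead one mimics the Dolbeault construction of \cite{Sz-FM34}, resolving $\pi_1^*\F$ and $\pi_1^*\G \otimes K_{\CP}(2z_0 + \cdots + z_n) \otimes \pi_2^*\O_{\CPt}(1)$ by fine sheaves (smooth forms with prescribed growth, or the $L^2$-resolution already used in the proof of Proposition \ref{prop:L2dR}) whose higher direct images under $\pi_2$ vanish, checking that the differentials vary holomorphically in $\zeta$ and that the resulting double complex's total direct image is represented by a finite complex of locally free sheaves. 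Once this resolution is in hand — which is essentially a repackaging of the argument proving Proposition \ref{prop:L2dR} together with the extension at $\infty$ built into (\ref{eq:nabla-extension}) — the rest is the formal homological-algebra argument sketched above. I would therefore organize the proof as: (i) acyclic resolution flat over the base; (ii) constancy of $\chi$ plus vanishing of $\H^0, \H^2$; (iii) cohomology concentrated in degree $1$ $\Rightarrow$ flat $\Rightarrow$ locally free; (iv) base change gives the fiber.
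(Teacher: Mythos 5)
Your proposal is correct and follows essentially the same route as the paper: the paper's proof likewise combines the vanishing of $\H^0$ and $\H^2$ from Proposition \ref{prop:hypercohomology} with the local constancy of the index to get constant $\dim \H^1(\dR_\zeta)$, notes that the family $\dR_\bullet$ is flat over $\CPt$, and then invokes the proper base change theorem. Your steps (i)--(iv) are simply a careful unpacking of that appeal to base change.
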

\begin{proof}
In view of the proposition and because the index of a continuous deformation of a sheaf morphism is locally constant, 
the dimension of the first hypercohomology spaces $\H^1(\nabla_{\zeta})$ of the twisted de Rham complex (\ref{eq:nabla-extension}) 
is independent of $\zeta\in\CPt$. 
The family $\dR_{\bullet}$ of complexes is clearly flat over $\CPt$. 
The statements then follow from the proper base change theorem. 
\end{proof}

\begin{defn}
 The extension of $\Et$ over $\CPt$ is the holomorphic vector bundle whose fiber over $\zeta \in \CPt$ is defined as the first 
 hypercohomology space $\H^1(\nabla_{\zeta})$ of the twisted de Rham complex (\ref{eq:nabla-extension}). 
\end{defn}
By an abuse of notation, we will continue to denote the extension of $\Et$ over $\CPt$ by $\Et$.

\subsection{Transforming the parabolic structure}\label{ssec:transforming-parabolic}

We define the transformed parabolic structure by first refining the definition of $\F$ and $\G$ from Section \ref{sec:dR} to depend on a parameter $\beta\in \R$. 
Namely, for any $1 \leq i \leq n$ 
\begin{enumerate}
 \item \label{case:deletion}
 for the values $1\leq s \leq r$ such that $\beta_i^{j(s)} = 0 = \mu_i^s$ and \emph{any} $\beta\in\R$ we set 
  $$
    \f_i^s (\beta ) = \f_i^s, \quad \g_i^s (\beta ) = \g_i^s;
 $$
 \item \label{case:beta-shifting} 
 for the values $1\leq s \leq r$ such that at least one of $\beta_i^{j(s)}, \mu_i^s$ is non-zero and for the unique integer $m$ satisfying 
 $\beta_i^{j(s)} + m -1 < \beta \leq \beta_i^{j(s)} + m$ we set 
 $$
    \f_i^s (\beta ) = (z-z_i)^m \f_i^s, \quad \g_i^s (\beta ) = (z-z_i)^m\g_i^s.
 $$
\end{enumerate}
In the case $i=0$ the definitions are similar to case (\ref{case:beta-shifting}), up to replacing the local coordinate $z-z_i$ by $z^{-1}$. 
We then let $\F_{\beta}, \G_{\beta}$ be the $\O_{\CP}$-modules that are equal to $\F, \G$ away from the points $z_i$, and that are generated by 
$\f_i^s (\beta ), \g_i^s (\beta )$ respectively near $z_i$ for all indices $1 \leq s \leq r$. 

\begin{prop}
 With the above definitions $\nabla$ induces for all $\beta \in \R$ a sheaf morphism 
$$
 \nabla_{\beta} : \F_{\beta} \to \G_{\beta} \otimes K_{\CP}(z_1+\cdots +z_n + 2\cdot z_0).
$$
\end{prop}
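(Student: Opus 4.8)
The plan is to verify, locally near each parabolic point, that the connection $\nabla_\zeta$ (equivalently $\nabla$, since the two differ by the regular term $-\zeta\,\Id_{\E}\,\d z$ which clearly maps $\F_\beta$ into $\G_\beta(z_1+\cdots+z_n)$) sends each generating section $\f_i^s(\beta)$ into the $\O_{\CP}$-module generated by the $\g_i^s(\beta)$, after tensoring by $K_{\CP}(z_1+\cdots+z_n+2\cdot z_0)$. Away from the $z_i$ there is nothing to prove, since $\F_\beta=\F=\E=\G=\G_\beta$ there and $\nabla$ is holomorphic. So the statement is entirely local at the points $z_0,z_1,\ldots,z_n$, and since the case $i>0$ and the case $i=0$ are handled by the same bookkeeping (with $z-z_i$ replaced by $z^{-1}$ and an extra shift by $z^{-2}$ absorbed into the $2\cdot z_0$ in the twist), I would treat $i>0$ in detail and indicate the modification at $z_0$.

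First I would recall from Section \ref{sec:harmonic-bundles} that in the chosen local frame $\{\e_i^s\}$ the connection matrix is $A_i(z)/(z-z_i)\,\d z$, so $\nabla \e_i^s = \sum_{s'} a_{i,s's}(z)(z-z_i)^{-1}\e_i^{s'}\otimes\d z$, and that the twist $K_{\CP}(\cdots+z_1+\cdots+z_n+\cdots)$ precisely supplies the factor $(z-z_i)^{-1}\d z$, so the target module near $z_i$ is generated over $\O_{\CP}$ by the $(z-z_i)^{-1}\g_i^{s'}(\beta)\otimes\d z$. Writing $\f_i^s(\beta)=(z-z_i)^{m(s)}\f_i^s$ and $\g_i^{s'}(\beta)=(z-z_i)^{m(s')}\g_i^{s'}$ with $m(s),m(s')$ the integers from case (\ref{case:beta-shifting}) (or $0$ in case (\ref{case:deletion})), and $\f_i^s=(z-z_i)^{p(s)}\e_i^s$, $\g_i^{s'}=(z-z_i)^{q(s')}\e_i^{s'}$ with $p,q\in\{0,1\}$ read off from the frame definitions in Section \ref{sec:dR}, the membership $\nabla_\beta\f_i^s(\beta)\in\G_\beta(\cdots)$ becomes the collection of scalar conditions
$$
  m(s)+p(s)+\ord_{z_i} a_{i,s's} \;\geq\; m(s')+q(s'),\qquad \text{for all } s' \text{ with } a_{i,s's}\not\equiv 0,
$$
together with the contribution of the $(z-z_i)^{m(s)}$ being differentiated, which produces a $(z-z_i)^{m(s)-1}\e_i^s$ term requiring $m(s)-1+p(s)\geq m(s')+q(s')-1$ in the diagonal position $s'=s$, i.e. the case $s'=s$ of the same inequality (here the extra $\d z$ without a $(z-z_i)^{-1}$ is still absorbed by the twist). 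So the whole proof reduces to checking this finite list of integer inequalities.

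The bulk of the work, and the main obstacle, is the case analysis for the off-diagonal entries: one must use the vanishing results established in Section \ref{sec:harmonic-bundles} — namely that $a_{i,s's}\equiv 0$ whenever $\mu_i^s-\mu_i^{s'}\notin\Z$, and the strengthening from Assumption \ref{assn:main}(\ref{assn:main2}) and the paragraph following it, that $a_{i,s's}\equiv 0$ when exactly one of $s,s'$ has $\beta_i^{j(\cdot)}=0=\mu_i^{\cdot}$ — to rule out precisely those pairs $(s,s')$ for which the inequality above would fail. For the surviving pairs one checks the inequality by splitting into the cases of the frame definitions: $\beta_i^{j(s)}=0$ versus $>0$, $\mu_i^s=0$ versus $\neq 0$, $k(s)<-1$ versus $\geq-1$, and similarly for $s'$, matching each against the relation $\beta_i^{j(s)}+m(s)-1<\beta\leq\beta_i^{j(s)}+m(s)$. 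The key quantitative input is that if $a_{i,s's}\not\equiv0$ then $\mu_i^s-\mu_i^{s'}\in\Z$, while the weights satisfy (\ref{eq:parweights}), so $\beta_i^{j(s)}-\beta_i^{j(s')}\in(-1,1)$; combining these with $\mu_i^s\equiv\beta$-normalisation (the standard relation $\beta_i^{j(s)}\equiv -\mathrm{Re}\,\mu_i^s$ or the integrality bookkeeping from \cite{Sim}) forces $m(s)\geq m(s')$ or $m(s)\geq m(s')-1$ exactly as needed to beat the worst case $q(s')-p(s)=1$. Once these inequalities are verified uniformly in $\beta$, the same computation with $z^{-1}$ in place of $z-z_i$ and the extra order-shift already built into $2\cdot z_0$ settles $i=0$, using Assumption \ref{assn:main}(\ref{assn:main0}) (no integer eigenvalue at infinity) to control the analogous off-diagonal terms there, and the proof is complete.
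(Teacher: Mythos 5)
Your reduction of the statement to a finite list of valuation inequalities at each $z_i$, and your use of the identical vanishing of the off-diagonal entries $a_{i,s's}$ (when $\mu_i^s-\mu_i^{s'}\notin\Z$, or when exactly one of $s,s'$ is of the type $\beta_i^{j(\cdot)}=0=\mu_i^{\cdot}$), matches the skeleton of the paper's proof. The gap is in how you close the worst case. Since all weights lie in an interval of length $1$, for fixed $s$ the shift $m(s')$ of a target generator can only equal $m(s)-1$, $m(s)$ or $m(s)+1$; the genuinely problematic pairs are those with $m(s')=m(s)+1$, where the term $(z-z_i)^{m(s)-1}\,b_{i,s's}(z)\,\g_i^{s'}$ must land in $(z-z_i)^{m(s)+1}\O\cdot\g_i^{s'}\otimes K_{\CP}(z_i)$, i.e.\ one needs $b_{i,s's}(z_i)=0$. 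You attempt to \emph{exclude} these pairs numerically via ``the standard relation $\beta_i^{j(s)}\equiv-\Re\mu_i^s$'', but no such relation between the de Rham parabolic weights and the residue eigenvalues holds in this setting: by Remark \ref{rem:Assumptions} (Simpson's table) the weight $\beta_i^{j(s)}$ and the eigenvalue $\mu_i^s$ are independent parameters --- they only jointly determine the Dolbeault data $\alpha_i^s=\Re\mu_i^s$, $\lambda_i^s=(\mu_i^s-\beta_i^{j(s)})/2$ --- so the configuration $m(s')=m(s)+1$ with $a_{i,s's}\not\equiv0$ cannot be ruled out this way.

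What actually saves this case, and what the paper uses, is that $m(s')=m(s)+1$ forces a strict inequality between the weights $\beta_i^{j(s')}$ and $\beta_i^{j(s)}$, so $s$ and $s'$ sit in different steps of the parabolic filtration in the order for which compatibility of $\res_{z_i}(\nabla)$ with $F_i^{\bullet}$ (see (\ref{eq:logHiggsfield})) forces the \emph{first-order} vanishing $a_{i,s's}(z_i)=0$; this single extra order of vanishing exactly compensates the single extra power of $(z-z_i)$ in the target generator. Your inequality does contain the slot $\ord_{z_i}a_{i,s's}$ that could absorb this, but you never identify residue--filtration compatibility as the source of a positive contribution there, and without it the inequality fails for precisely these pairs. (A smaller point: for a generator of the ``deletion'' type $\beta_i^{j(s)}=0=\mu_i^s$, whose shift stays at $0$ for all $\beta$ while other shifts grow, your inequality would fail for large $\beta$ unless such a generator couples only to generators of the same type; this is exactly what Assumption \ref{assn:main} and the normal form of $A_i$ give, and since you cite that vanishing this part of your argument is recoverable.)
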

\begin{proof}
We only treat the case $i>0$, the case $i=0$ being similar and simpler. 
Using the notations introduced above, the action of $\nabla$ on $(z-z_i)^m \e_i^s$ with respect to the chosen trivialization $\e_i^{s'}$ is given by 
$$   
  \nabla ((z-z_i)^m \e_i^s) = (z-z_i)^{m-1} \left( \sum_{s' = 1}^r (a_{i,s's}(z) + m \delta_{s's}) \e_i^{s'}\right) \d z
$$
with $a_{i,s's}$ the entries of the regular matrix $A$ in (\ref{eq:Aiz}) and $\delta_{s's}$ standing for Kronecker's $\delta$-symbol. 
As we observed, $a_{i,s's} = 0$ unless $\mu_i^s - \mu_i^{s'} \in \Z$. 
Moreover since the residue is compatible with the parabolic structure, we also have $a_{i,s's}(z_i) = 0$ if $j(s) > j(s')$. 
Let us now write 
\begin{equation}\label{eq:nabla-action}
     \nabla ( \f_i^s ) = (z-z_i)^{-1} \left( \sum_{s' = 1}^r \tilde{a}_{i,s's}(z) \g_i^{s'}\right) \d z. 
\end{equation}
\begin{lem}
 The coefficients appearing on the right-hand side of (\ref{eq:nabla-action}) satisfy the following conditions.
 \begin{enumerate}
  \item $\tilde{a}_{i,s's}$ are regular at $z_i$ \label{item:vanishing1}
  \item if $s',s$ are such that $\mu_i^s - \mu_i^{s'} \notin \Z$ then $\tilde{a}_{i,s's} =0$ \label{item:vanishing2}
  \item if $s',s$ are such that $j(s') > j(s)$ then $\tilde{a}_{i,s's}(z_i) = 0$. \label{item:vanishing3}
 \end{enumerate}
\end{lem}
\begin{proof}
These claims can be directly checked using the definitions of $\f_i^s,\g_i^s$ and the connection form of $\nabla$ with respect to the vectors $\e_i^s$ given above. 
Indeed, the only coefficients $\tilde{a}_{i,s's}$ that may be more singular than the corresponding coefficient ${a}_{i,s's}$ correspond to pairs of indices $s',s$ 
such that $\g_i^{s'} = (z-z_i) \e_i^{s'}$ i.e. $\beta_i^{j(s')} = 0, k(s') \geq -1$. We separate cases. 

If $\beta_i^{j(s)} = 0, \mu_i^{s} = 0$ then by Assumption \ref{assn:main} (\ref{assn:main1}) 
we have that $\f_i^s = \e_i^s$ is annihilated by $\res_{z_i}^{j(s)}$, said differently $a_{i,s's} (z_i) =0$, 
hence $\tilde{a}_{i,s's} = (z-z_i)^{-1}a_{i,s's}$ is regular at $z_i$. 

If $\beta_i^{j(s)} = 0,\mu_i^{s} \neq 0$ and $k(s) < -1$ then $\f_i^s = \e_i^s$, and as the nilpotent part $N_i^{j(s)}$ 
decreases the index of the weight-filtration $k$, it follows that the coefficient $a_{i,s's}$ vanishes at $z_i$, 
hence again $\tilde{a}_{i,s's}$ is regular at $z_i$. 

If $\beta_i^{j(s)} = 0, \mu_i^{s} \neq 0$ and $k(s) \geq -1$ then $\f_i^s = (z-z_i) \e_i^s$, so 
$\tilde{a}_{i,s's} = a_{i,s's} + \delta_{s's}$, thus $\tilde{a}_{i,s's}$ is regular at $z_i$ since $a_{i,s's}$ is regular there. 

Finally, if $\beta_i^{j(s)} > 0$ then by Assumption \ref{assn:main} (\ref{assn:main2}) 
and the vanishing condition on $a_{i,s's}$, we see that $a_{i,s's}$ is identically zero, therefore so is $\tilde{a}_{i,s's}$. 
This finishes the proof of part (\ref{item:vanishing1}).

Similarly, for all values of $s',s$ such that $\mu_i^s - \mu_i^{s'} \notin \Z$ we have 
$$
  \tilde{a}_{i,s's} = (z-z_i)^l a_{i,s's}
$$
for some integer $l$, hence $a_{i,s's} = 0$ implies $\tilde{a}_{i,s's} =0$. 
This finishes the proof of part (\ref{item:vanishing2}). 

As for part (\ref{item:vanishing3}), notice that the condition $j(s') > j(s)$ implies in particular that $\beta_i^{j(s')} > 0$. 
It then follows that  $\f_i^s = \e_i^s = \g_i^s$, and in particular $\tilde{a}_{i,s's} = a_{i,s's}$, for which the 
corresponding statement is known. 
\end{proof}

We return to the proof of the Proposition. 
For vectors $\e_i^{s}$ corresponding to $0$ parabolic weight and $0$ eigenvalue of the residue, 
by Assumption \ref{assn:main} (\ref{assn:main1})  
and the above vanishing conditions only vectors of the same kind appear with non-zero 
coefficient on the right-hand side of (\ref{eq:nabla-action}). Given the definitions of $\F_{\beta}$, one then 
sees that the right-hand side belongs to $\G_{\beta}\otimes K_{\CP}(z_i)$. 

Fix now $1\leq s \leq r$ such that $\beta_i^{j(s)} > 0$ or $\beta_i^{j(s)} = 0 \neq \mu_i^{j(s)}$ and let $\beta \in \R$ be arbitrary. 
Recall that we set $m$ the unique integer satisfying $\beta_i^{j(s)} + m -1 < \beta \leq \beta_i^{j(s)} + m$, and we used 
$m$ to define $\f_i^s(\beta ) = (z-z_i)^m \f_i^s$. 
Let us now write an obvious consequence of the Lemma: if $\beta_i^{j(s)} > 0$ or $\beta_i^{j(s)} = 0 \neq \mu_i^{j(s)}$ then 
\begin{align*}
     \nabla ( (z-z_i)^m \f_i^s ) & =  (z-z_i)^{m-1} \left( m \f_i^s + \sum_{s' = 1}^r \tilde{a}_{i,s's}(z) \g_i^{s'}\right) \d z \\
				 & =  (z-z_i)^{m-1} \left( \sum_{s' = 1}^r b_{i,s's}(z) \g_i^{s'}\right) \d z 
\end{align*}
with 
$$
  b_{i,s's}(z) = \tilde{a}_{i,s's}(z) + m \delta_{s's}.
$$
(For ease of notation we do not write out the dependence of $b_{i,s's}$ on $m$). 
This formula readily follows from the definitions of $\f_i^s, \g_i^s$, as they only differ in the case 
$\beta_i^{j(s)} = 0 = \mu_i^{j(s)}$ that we exclude here. 
The Lemma then implies the same regularity and vanishing conditions for the coefficients $b_{i,s's}$ as for $\tilde{a}_{i,s's}(z)$. 
We will now show that all the vector components on the right-hand side of the above expression for $\nabla ( (z-z_i)^m \f_i^s )$ 
belong to $\G_{\beta}\otimes K_{\CP}(z_i)$. Let us introduce the subsets of indices 
\begin{align*}
   I_1 & = \{ s' | \quad \beta_i^{j(s')} + m -2 < \beta \leq \beta_i^{j(s')} + m - 1 \}, \\
   I_2 & = \{ s' | \quad \beta_i^{j(s')} + m -1 < \beta \leq \beta_i^{j(s')} + m \}, \\
   I_3 & = \{ s' | \quad \beta_i^{j(s')} + m < \beta \leq \beta_i^{j(s')} + m + 1 \}. 
\end{align*}
Then, it is easy to see that $I_1 \coprod I_2 \coprod I_3 = \{ 1, \ldots , n \}$, as all weights lie within an interval of length $1$. 
For $s'\in I_1$, we have $\g_i^{s'} (\beta ) = (z-z_i)^{m-1} \g_i^{s'}$, 
hence the corresponding term on the right-hand side belongs to $\G_{\beta}\otimes K_{\CP}$ because $b_{i,s's}$ is regular.
For $s'\in I_2$, we have $\g_i^{s'} (\beta ) = (z-z_i)^m \g_i^{s'}$, 
hence the corresponding term on the right-hand side belongs to $\G_{\beta}\otimes K_{\CP}(z_i)$ because $b_{i,s's}$ is regular. 
Finally, for $s' \in I_3$ we necessarily have $\beta_i^{j(s')} < \beta_i^{j(s)}$, said differently $j(s') > j(s)$, so 
we have $b_{i,s's}(z_i) = 0$. Moreover, in this case we have $\g_i^{s'} (\beta ) = (z-z_i)^{m+1} \g_i^{s'}$. 
Therefore, the corresponding term on the right-hand side is of the form 
$$
  (z-z_i)^{-2} b_{i,s's}(z) \g_i^{s'}(\beta ), 
$$
which is again in $\G_{\beta}\otimes K_{\CP}(z_i)$ as $b_{i,s's}(z_i) = 0$. This finishes the proof.
\end{proof}

By virtue of the proposition, we may consider the filtered versions of (\ref{eq:nabla-extension}) that read as: 
\begin{equation}\label{eq:filtered-nabla-extension}
   \nabla_{\beta, \zeta} = \nabla_{\beta} \otimes s_{\infty} - \Id_{\E}\d z \otimes s_0 : \pi_1^*\F_{\beta} \to \pi_1^*\G_{\beta} \otimes K_{\CP}(2\cdot z_0 + z_1+\cdots + z_n) \otimes \pi_2^*  \O_{\CPt}(1). 
\end{equation}
For convenience, we introduce the notation $\dR_{\beta, \zeta}$ for the above complex, that we call the twisted filtered 
de Rham complex. Analogously to (\ref{eq:transformed_bundle}), for all $\beta \in [ 0,1 )$ we may then set 
\begin{equation}\label{eq:transformed_parabolic_sheaf}
   \Et_{\beta} = \R^1 (\pi_2)_* \dR_{\beta, \bullet} . 
\end{equation}

\begin{clm}\label{clm:qis}
For all $\zeta \notin \Pt \cup \{\infty \}$ and any $\beta, \beta'\in \R$ sufficiently close to each other, 
the filtered twisted de Rham complexes $\dR_{\beta, \zeta}$ and $\dR_{\beta', \zeta}$ are quasi-isomorphic.
In particular, for all $\beta \in [0,1)$ the sheaf $\Et_{\beta}$ is a locally free subsheaf of $\Et_0$ of full rank. 
\end{clm}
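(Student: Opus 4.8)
The plan is to prove the claim locally around each point of $\CP$ and then globalize via the hypercohomology long exact sequence. Away from the parabolic points $z_0, z_1, \ldots, z_n$, the sheaves $\F_\beta, \G_\beta$ do not depend on $\beta$ at all, so the two complexes literally coincide there; hence the only work is near the points $z_i$. Near $z_i$, the definitions in subsection \ref{ssec:transforming-parabolic} show that the frames $\f_i^s(\beta), \g_i^s(\beta)$ change only when $\beta$ crosses one of the finitely many threshold values $\beta_i^{j(s)} + m$ ($m\in\Z$); for $\beta, \beta'$ in a common such gap the frames are \emph{equal}, so the local complexes are identical and \emph{a fortiori} quasi-isomorphic. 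Thus the content of the claim is that crossing a single threshold induces a quasi-isomorphism, and it suffices to treat $\beta' = \beta + \varepsilon$ with $\varepsilon>0$ small and exactly one index $s$ (and the indices sharing its parabolic weight) affected.

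First I would set up the natural inclusion of complexes. By construction $\f_i^s(\beta') $ is either equal to $\f_i^s(\beta)$ or equal to $(z-z_i)\f_i^s(\beta)$ (and similarly for $\g$), so for $\beta < \beta'$ sufficiently close there is a termwise injective morphism of complexes $\dR_{\beta', \zeta} \hookrightarrow \dR_{\beta, \zeta}$ compatible with $\nabla_{\beta,\zeta}$ (this is exactly the content of the Proposition preceding the claim, applied at both weights). The cokernel complex is supported on the divisor $z_1 + \cdots + z_n + z_0$ and is a complex of skyscraper-type sheaves: in degree $0$ it is $\bigoplus$ (over the affected $s$) of $\F_\beta / \F_{\beta'}$, a length-one module at $z_i$, and similarly in degree $1$. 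The key computation is that the induced map $\nabla_{\beta,\zeta}$ on the cokernel complex is an \emph{isomorphism}. Concretely, on the graded piece at $z_i$ the map $\nabla_{\beta,\zeta}$ acts on $\e_i^s(z_i)$ by $\mu_i^s - \zeta (z-z_i)|_{z_i} + \cdots$; but the affected indices $s$ are precisely those with $(\beta_i^{j(s)}, \mu_i^s) \neq (0,0)$, and for $\zeta \notin \Pt$ together with Assumption \ref{assn:main} the relevant scalar — which is $\mu_i^s$ itself in the residue direction at the finite points, and is controlled by the non-integrality of residues at $z_0$ — is nonzero, while the off-diagonal entries vanish by the Lemma's conditions \eqref{item:vanishing2}, \eqref{item:vanishing3}. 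Hence the cokernel complex is acyclic, and the inclusion $\dR_{\beta',\zeta} \hookrightarrow \dR_{\beta,\zeta}$ is a quasi-isomorphism.

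Chaining finitely many such single-threshold quasi-isomorphisms handles arbitrary $\beta, \beta'$ in the same gap pattern, and this proves the first assertion. For the second assertion, I would argue as follows: taking $\R^1(\pi_2)_*$ and using that the quasi-isomorphism above is actually an \emph{inclusion} of complexes inducing an isomorphism on $\H^1$ for every $\zeta \notin \Pt \cup \{\infty\}$, the induced sheaf map $\Et_\beta \to \Et_0$ (for $\beta \in [0,1)$) is injective with torsion cokernel supported on $\Pt \cup \{\infty\}$; since $\Et_0$ is locally free (Proposition \ref{prop:locally_free}) and $\Et_\beta$ is coherent and torsion-free of the same generic rank $r$, it is a locally free subsheaf of full rank. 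One should note that $\Et_\beta$ is coherent because it is again an $\R^1(\pi_2)_*$ of a flat family of complexes, and torsion-free because it injects into the locally free $\Et_0$.

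The main obstacle I anticipate is the acyclicity of the cokernel complex at the singular point $z_0$, where the second-order pole makes the direct "residue is nonzero" argument less transparent; there one must carefully track how $\nabla_{\beta,\zeta}$ acts on $z^{-k}\e_0^s$ modulo the next order, using that $A$ is diagonal and invertible in the relevant block (so the leading $A\,\d z$ term already gives an isomorphism on the skyscraper cokernel, independently of $\zeta$), and invoke Assumption \ref{assn:main}\eqref{assn:main0} only where the residue-level term is the one that matters. The finite points $z_i$ ($i>0$) are comparatively routine given the Lemma.
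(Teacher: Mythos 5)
Your proposal follows essentially the same route as the paper: a short exact sequence of complexes whose quotient is a skyscraper complex supported at the parabolic points, acyclicity of that quotient via the non-integrality assumptions and the vanishing of the relevant off-diagonal entries, and then local freeness of $\Et_{\beta}$ plus generic isomorphism to conclude the full-rank subsheaf statement. One correction at $z_0$: the leading term of the twisted map there is $(A-\zeta)\,\d z$, not $A\,\d z$, so the induced map on the skyscraper quotient is \emph{not} an isomorphism ``independently of $\zeta$'' --- its invertibility is exactly the hypothesis $\zeta\notin\Pt$ (neither invertibility of $A$ nor non-integrality of the residue is the operative condition at leading order), which is where that hypothesis enters the argument. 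With that adjustment the proof matches the paper's.
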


\begin{proof}
 Without loss of generality we may assume that $\beta > \beta'$. 
 Writing $L = K_{\CP}(2\cdot z_0 + z_1+\cdots + z_n)$ we then have a short exact sequence of complexes of sheaves of $\C$-vector spaces on $\CP$ 
 $$
  \xymatrix{\F_{\beta} / \F_{\beta'} \ar[r] & (\G_{\beta} / \G_{\beta'})\otimes L \\
	    \F_{\beta}  \ar[r]^{\nabla_{\beta, \zeta}}  \ar[u] & \G_{\beta}\otimes L \ar[u]\\
	    \F_{\beta'} \ar[r]^{\nabla_{\beta', \zeta}}  \ar[u] & \G_{\beta'}\otimes L \ar[u]} 
 $$
 with the upper horizontal map induced by the middle one. 
 By the definition of $\F_{\beta}, \G_{\beta}$ they are equal to $\E$ away from the set $P \cup \{\infty \}$, therefore the sheaves in the top row of this diagram are 
 supported at this finite set. 
 Now assume that for all $1 \leq i \leq n$ there is at most one value of the form $\beta_i^j + m$ between $\beta$ and $\beta'$. 
 If there is zero such value between $\beta$ and $\beta'$ then near $z_i$ the complexes $\dR_{\beta, \zeta}$ and $\dR_{\beta', \zeta}$ are clearly quasi-isomorphic near $z_i$. 
 Let us now assume that there exists exactly one such  $\beta \geq \beta_i^j + m > \beta'$. 
 Then, there are two possibilities: either $\beta_i^j = 0$ or $\beta_i^j > 0$. 
 If $\beta_i^j = 0$ then it follows from the definitions of $\F_{\beta}, \G_{\beta}$ that the quotients $\F_{\beta} / \F_{\beta'}$ do not contain any classes represented by 
 vectors of the form $\phi e_i^s$ such that $\mu_i^s = 0$, for any non-zero function $\phi$. 
 On the other hand, for all other basis vectors  it follows from Assumptions \ref{assn:main} (\ref{assn:main1}) and (\ref{assn:main2}) 
 coupled with (\ref{eq:nabla-action}) that on the quotient $\F_{\beta} / \F_{\beta'}$ the action of $\nabla_{\beta, \zeta}$ is by non-zero constants. 
 Therefore, the morphism induced on the quotients maps the fiber of $\F_{\beta} / \F_{\beta'}$ over ${z_i}$ isomorphically onto that of $(\G_{\beta} / \G_{\beta'})\otimes L$. 
 
 For $i=0$ from the assumption $\zeta \notin \Pt$, the formulas (\ref{eq:Dinfinity}), (\ref{eq:twisteddRmap}) 
 and compatibility of $\nabla$ with the parabolic structure, we see that locally the map $\nabla_{\beta, \zeta}$ maps the fiber of $\F_{\beta}$ over $z_0$ 
 isomorphically onto that of $(\G\otimes L)_{\beta}$. This will then clearly remain the case after passing to quotients. 
 
 Now, it follows from the first statement that $\Et_{\beta}$ is a subsheaf of full rank. 
 As $\Et_0$ is locally free by Proposition \ref{prop:locally_free} and $\CP$ is smooth, we obtain the desired result. 
\end{proof}

We now set
\begin{equation}
 \widehat{D}_{\red} = \mbox{div} (\Pt ) + \infty ,
\end{equation}
where $\mbox{div}$ stands to denote the simple effective divisor associated to a set of distinct points, 
and extend the definition of $\Et_{\beta}$ to all $\beta \in \R$ by the requirement 
$$
 \Et_{\beta + 1} = \Et_{\beta} (-\widehat{D}_{\red} ). 
$$

\begin{prop}\label{prop:par_str}
Assume that in addition to Assumption \ref{assn:main} the following conditions are also fulfilled for all $i,j$: 
 \begin{itemize}
  \item 
   if $i=0$ then $\beta_i^j$ is not an eigenvalue of $\res_{z_i}(\nabla )^j$;
  \item 
   if $i > 0$ and 
   $\beta_i^j = 0$ 
   then the nilpotent part $N_i^j$ of the residue acts trivially on the generalized $0$-eigenspace of $\res_{z_i}(\nabla )^j$; 
 \item 
   if $i > 0$ and for some $s$ with $j(s) =j$ we have $\Re \mu_i^s \neq 0$ then $\beta_i^j \neq \mu_i^s$. 
  \end{itemize}
Then, the family $\{ \Et_{\beta} \}_{\beta \in \R}$ is an $\R$-parabolic sheaf with divisor $\widehat{D}_{\red}$. 
\end{prop}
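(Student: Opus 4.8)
The plan is to verify the two defining conditions of an $\R$-parabolic sheaf for the family $\{\Et_\beta\}_{\beta\in\R}$: first, the \emph{semicontinuity/left-continuity} condition, namely that for each $\beta$ there exists $\epsilon>0$ with $\Et_{\beta'} = \Et_\beta$ for all $\beta'\in(\beta-\epsilon,\beta]$; and second, the \emph{normalization} condition $\Et_{\beta+1} = \Et_\beta(-\Dt_{\red})$, which here holds \emph{by definition} of the extension to all real $\beta$. So the real work is the left-continuity (together with checking that the jumps actually occur, so that the filtration is genuinely indexed by $\R$ and not eventually constant on each unit interval — but this is automatic from the normalization once left-continuity is established). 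The strategy is to reduce everything to a local statement near each point of $\Dt_{\red} = \operatorname{div}(\Pt) + \infty$, using Claim \ref{clm:qis}: for $\beta,\beta'$ sufficiently close and $\zeta\notin\Pt\cup\{\infty\}$ the complexes $\dR_{\beta,\zeta}$ and $\dR_{\beta',\zeta}$ are quasi-isomorphic, hence $\Et_\beta$ and $\Et_{\beta'}$ have the same fibers over $\Ct\setminus\Pt$; I then need to control what happens at the points of $\Pt$ and at $\infty$.

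First I would set up the local picture: on $\CP$, the sheaves $\F_\beta,\G_\beta$ only change at the points $z_0,z_1,\dots,z_n$, and as $\beta$ increases past a value of the form $\beta_i^{j}+m$ (with $m\in\Z$) for some $i,j$, the stalk at $z_i$ gets multiplied by an extra factor of the local coordinate. The jump locus of the family $\{\Et_\beta\}$ in the variable $\zeta$ can only occur at the points where the pushforward $\R^1(\pi_2)_*$ changes, which by proper base change and the short exact sequence of complexes in the proof of Claim \ref{clm:qis} is governed by the cokernel/kernel of the map induced by $\nabla_{\beta,\zeta}$ on the skyscraper quotient complexes $\F_\beta/\F_{\beta'}\to(\G_\beta/\G_{\beta'})\otimes L$ supported on $P\cup\{\infty\}$. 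I would reuse exactly the case analysis from the proof of Claim \ref{clm:qis} — the three extra hypotheses in the present Proposition are precisely what is needed to upgrade "quasi-isomorphic for generic $\zeta$" to "the quotient complex is acyclic for \emph{all} $\zeta$, except that the jump in $\Et_\beta$ as a subsheaf of $\Et_0$ is concentrated exactly at the points of $\Dt_{\red}$". Concretely: the first bullet ($\beta_i^j$ not an eigenvalue of $\res_{z_0}(\nabla)^j$ when $i=0$) guarantees that at $z_0$ the induced map on quotients is an isomorphism, so no contribution to $\Et_\beta$ from $z_0$ on $\Ct$; the second and third bullets do the same at $z_1,\dots,z_n$, ruling out the degenerate situations ($\beta_i^j$ equal to an eigenvalue, or nontrivial nilpotent action on a $0$-eigenspace) where the induced map on the skyscraper would fail to be an isomorphism and one would get a spurious jump at a point of $\Ct\setminus\Pt$.

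Then I would assemble: from the local analysis, $\Et_\beta$ as a subsheaf of $\Et_0$ is, near each point $\hat z\in\Ct$, of the form $\Et_0$ twisted down by $\lfloor\text{something}(\beta)\rfloor\cdot\hat z$ only when $\hat z\in\Pt$, and the jumps in $\beta$ happen at a locally finite set of values; left-continuity then follows because the relevant floor/ceiling functions appearing in the definitions of $\F_\beta,\G_\beta$ (the integer $m$ with $\beta_i^{j(s)}+m-1<\beta\le\beta_i^{j(s)}+m$) are themselves left-continuous by the closed-on-the-right convention in case (\ref{case:beta-shifting}). The behaviour at $\infty\in\CPt$ I would handle separately using the explicit form (\ref{eq:nabla-extension}): at $\zeta=\infty$ the map degenerates to $-\Id_\E\,\d z$, and I would check that the induced jump in $\Et_\beta$ at $\infty$ is again governed by a left-continuous integer-valued function of $\beta$, consistent with $\infty$ being a component of $\Dt_{\red}$. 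Finally the normalization $\Et_{\beta+1}=\Et_\beta(-\Dt_{\red})$ is the definition, so the family is an $\R$-parabolic sheaf with divisor $\Dt_{\red}$.

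The main obstacle I expect is the bookkeeping at $\infty\in\CPt$: unlike the points of $\Pt$, this point does not come from an $L^2$/elliptic-complex description, so I cannot invoke Hodge-theoretic constancy there, and I must instead argue purely algebraically from (\ref{eq:nabla-extension}) that $\R^1(\pi_2)_*\dR_{\beta,\bullet}$ behaves correctly as $\beta$ crosses integer-shifted values. A secondary technical point is to make sure that when more than one eigenvalue $\mu_i^s$ (or more than one weight $\beta_i^j$) lies in a short interval, the various skyscraper quotients interact correctly — this is why Claim \ref{clm:qis} was phrased for $\beta,\beta'$ "sufficiently close", and I would need to chain finitely many such elementary steps to cover an arbitrary unit interval, checking at each step that the three genericity hypotheses still apply.
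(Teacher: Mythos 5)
Your proposal takes a genuinely different route from the paper, and it has a real gap at its crux. The paper does not argue directly on the de Rham side at all: it observes that for small $\varepsilon$ the fibers of $\Et_{1-\varepsilon}$ and of the Dolbeault-side bundle $\widehat{\mathcal E}_{1-\varepsilon}$ both compute the same space of $L^2$ harmonic $1$-forms, so the two smooth bundles agree, and then it imports the inclusion $\widehat{\mathcal E}_0\otimes\O_{\CPt}(-\widehat{D}_{\red})\subseteq\widehat{\mathcal E}_{1-\varepsilon}$ from Proposition 5.6 of \cite{Sz-FM34}. The three extra hypotheses in the statement are exactly (via Remark \ref{rem:Assumptions}, i.e.\ Simpson's table) the translation of the Assumptions of \cite{Sz-FM34} to the de Rham parameters, which is what licenses that citation — not, as you read them, conditions making skyscraper-quotient maps isomorphisms away from the jump locus.

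The gap in your direct approach is the inclusion $\Et_0(-\widehat{D}_{\red})\subseteq\Et_{1-\varepsilon}$, equivalently the statement that as $\beta$ sweeps a unit interval the cumulative drop of $\Et_\beta$ at each point of $\Pt\cup\{\infty\}$ is exactly one twist by that point. Claim \ref{clm:qis} is silent precisely there: it only asserts quasi-isomorphism of $\dR_{\beta,\zeta}$ and $\dR_{\beta',\zeta}$ for $\zeta\notin\Pt\cup\{\infty\}$, so chaining it tells you the jumps are \emph{supported} on $\widehat{D}_{\red}$ but gives no bound on their \emph{size}. The jump at $\zeta_l\in\Pt$ is produced by the degeneration of $\nabla_{\beta,\zeta}$ at $z_0=\infty\in\CP$ (where $A-\zeta_l$ fails to be invertible on an eigenspace), and the jump at $\zeta=\infty\in\CPt$ by the degeneration at $z_1,\dots,z_n$; quantifying these requires a local Fourier-transform/stationary-phase type computation (in the paper's framework, the spectral-curve and proper-transform analysis of \cite{Sz-FM34} and \cite{A-Sz}), not just the skyscraper bookkeeping on $\CP$. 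You correctly flag the point $\infty\in\CPt$ as "the main obstacle", but the proposal then only asserts that the jump there is "governed by a left-continuous integer-valued function consistent with $\infty$ being a component of $\widehat{D}_{\red}$" without proving the needed bound; the same issue is present, unflagged, at the points of $\Pt$. Without that quantitative control the family could a priori drop by more than one twist at some point of $\widehat{D}_{\red}$ within a unit interval, which would violate the normalization $\Et_{\beta+1}=\Et_\beta(-\widehat{D}_{\red})$ imposed by definition, and the family would fail to be decreasing — so this is not a technicality one can defer.
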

\begin{rk}\label{rem:Assumptions}
 According to Simpson's table \cite{Sim} p. 720, the parabolic weights and eigenvalues of the Higgs bundle associated to $(V, F_i^j, \dbar^{\E}, \nabla , h)$ 
 read respectively as 
 $$
  \alpha_i^s = \Re \mu_i^s, \quad \lambda_i^s = \frac{\mu_i^s - \beta_i^{j(s)}}2.
 $$
 The assumptions listed in the statement of Proposition \ref{prop:par_str} then mean that the Assumptions of \cite{Sz-FM34} hold for the 
 associated Higgs bundle. 
\end{rk}

\begin{proof}
For some small $\varepsilon$ the Hermitian metric $h$ is the harmonic metric for $F_{1-\varepsilon}$ and the underlying holomorphic vector bundle 
of the associated Higgs bundle is $\mathcal{F}_{1-\varepsilon}$ appearing in Section 5 \cite{Sz-FM34}. 
In particular, the first hypercohomology spaces of the twisted de Rham and Dolbeault complexes both compute the space of $L^2$ harmonic $1$-forms with 
values in $V_{1-\varepsilon}$, so they are isomorphic. 
It then follows that the smooth vector bundles underlying the holomorphic vector bundles 
$$
  \Et_{1-\varepsilon}  \quad \mbox{and} \quad \widehat{\mathcal{E}}_{1-\varepsilon}
$$
agree, as the fiber over $\zeta$ of the former is $\H^1 (\dR_{1-\varepsilon, \zeta} )$ and the one of the latter is $\H^1 (\mbox{Dol}_{1-\varepsilon, \zeta} )$. 
According to Proposition 5.6 \cite{Sz-FM34}, we have an inclusion of sheaves of $\O$-modules 
$$
   \widehat{\mathcal{E}}_0 \otimes \O_{\CPt}(-\widehat{D}_{\red}) \subseteq \widehat{\mathcal{E}}_{1-\varepsilon} .
$$
It then follows that a similar inclusion holds for $\Et$ too. 
\end{proof}

\section{Minimal Fourier--Laplace transformation}\label{sec:Laplace}

In this section, we recall the classical construction of minimal Fourier--Laplace transformation of a holonomic $\mathcal{D}_{\CP}$-module. 
Throughout the section we use the following standard notation: given a complex analytic manifold $X$, a smooth divisor $D\subset X$ 
and a locally free sheaf $S$ of $\O_X$-modules over $X$, we denote by $S(*D)$ the sheaf of meromorphic sections of $S$ with poles 
of arbitrary order along $D$. 

\subsection{Minimal extension of a $\mathcal{D}_{C}$-module}

For a complex curve $C$ we let $\mathcal{D}_{C}$ stand for the sheaf of analytic differential operators on $C$. 
Locally, in some complex chart $w$ the sections of $\mathcal{D}_{C}$ are of the form 
$$
  \sum_i h_i(w) (\partial_w)^i 
$$
where the sum is finite and $h_i$ are analytic functions.
Given a vector bundle $E$ endowed with an integrable connection $\nabla$ with singularities in the points $z_i$ as in Section \ref{sec:harmonic-bundles}, 
a classical construction associates a left $\mathcal{D}_{\CP}$-module $\mathcal{M}$ to $(E, \nabla )$: as an $\O_{\CP}$-module, 
$\mathcal{M}$ is isomorphic to the meromorphic bundle 
$$
  \mathcal{M} = E (* (z_0 + z_1 + \cdots + z_n)),
$$ 
with the differential operator $\partial_z$ acting on a section $e \in E(U)$ on some open set $U \subseteq C^0$ by 
$$
  \partial_z (e) = \nabla (e) \left( \frac{\partial}{\partial z} \right). 
$$
We say that a $\mathcal{D}_{\CP}$-module $\mathcal{M}$ of this type is a meromorphic bundle. 
Meromorphic bundles are holonomic $\mathcal{D}_{\CP}$-modules: every local section is annihilated by a local section of $\mathcal{D}_{\CP}$. 

We define an extension $\mathcal{N}$ of $\mathcal{M}$ as a $\mathcal{D}_{\CP}$-module such that the meromorphic bundle 
$$
  \mathcal{N} \otimes_{\O_{\CP}} \O_{\CP} (* (z_0 + z_1 + \cdots + z_n))
$$ 
is isomorphic to $\mathcal{M}$ as a $\mathcal{D}_{\CP}$-module. 
Observe that for any $\mathcal{M}$ of the above type (i.e. a meromorphic bundle) $\mathcal{N} = \mathcal{M}$ is always an extension. 
An extension $\mathcal{N}$ is called minimal if it has no non-trivial submodules and no non-trivial quotient modules. 
\begin{example}\label{ex:minext}
In these examples we let $\mathcal{M} = \O_{\CP} (* \{ 0 \})$ as an $\O_{\CP}$-module. 
\begin{enumerate}
 \item \label{ex:minext1} If $\partial_z$ acts by the trivial connection 
 $$
  \partial_z \cdot f = \frac{\d f}{\d z} 
 $$
 then $\mathcal{N} = \mathcal{M}$ is not a minimal extension: indeed, then the submodule 
 $$
  \O_{\CP} \subset \O_{\CP} (* \{ 0 \})
 $$
 is preserved by $\partial_z$. Instead, the lattice $\O_{\CP}$ with the induced $\partial_z$-action is a minimal extension. 
 \item \label{ex:minext2} Similarly, if $\partial_z$ acts by a logarithmic connection 
 $$
  \partial_z \cdot f = \left( \frac{\d}{\d z} + \frac nz \right) f 
 $$
 with integer residue $n \in \Z$ at $0$ then $\mathcal{N} = \mathcal{M}$ is not a minimal extension, because of the 
 submodule $\O_{\CP} (n \cdot \{ 0 \})$. Again, this latter lattice with the induced $\partial_z$-action is a minimal extension. 
 \item \label{ex:minext3} On the other hand, if we let $\partial_z$ act by a logarithmic connection 
 $$
  \partial_z \cdot f = \left( \frac{\d}{\d z} + \frac{\mu}z \right) f 
 $$
 with some non-integer residue $\mu \in \C \setminus \Z$ at $0$, then $\mathcal{N} = \mathcal{M}$ is a minimal extension. 
 Indeed, any non-trivial submodule of $\mathcal{N}$ is a locally free $\O_{\CP}$-module, and is necessarily of 
 the type $\O_{\CP} (n \cdot \{ 0 \})$ for some $n\in \Z$; however, it is easy to see that such modules 
 are not preserved by $\partial_z$. 
 \item \label{ex:minext-irregular} If $\partial_z$ acts by a connection with an irregular singularity 
 $$
  \partial_z \cdot f = \left( \frac{\d}{\d z} + m(z) \right) f 
 $$
 where $m$ is a meromorphic function with a pole of order at least $2$ at $0$, then $\mathcal{M}$ is a minimal extension. 
 Indeed, if the $z$-adic valuation of $f$ is some $n \in \Z$ then the $z$-adic valuation of $\partial_z \cdot f$ 
 is necessarily less than or equal to $n-2$, so again no submodule of the form $\O_{\CP} (n \cdot \{ 0 \})$ may be 
 invariant by $\partial_z$. 
\end{enumerate}
\end{example}

\begin{example}\label{ex:minext-higher-rank}
 Let us now take the $r$-fold direct power $\O_{\CP}$-module 
 $$
  \mathcal{M} = \O_{\CP} (* \{ 0 \}) \cdot e^1 \oplus \cdots \oplus \O_{\CP} (* \{ 0 \}) \cdot e^r
 $$
 for some $r\geq 2$.
 \begin{enumerate}
 \item 
 Let $\partial_z$ act on $\mathcal{M}$ by a logarithmic connection with maximal nilpotent residue at $0$, i.e. 
 $$
  \partial_z e^1 = 0, \quad \partial_z e^j = \frac{e^{j-1}}z \mbox{ for } j\geq 2. 
 $$
 Then a minimal extension is given by the $\O_{\CP}$-module 
 $$
  \O_{\CP} (* \{ 0 \}) \cdot e^1 \oplus \cdots \oplus \O_{\CP} (* \{ 0 \}) \cdot e^{r-1} \oplus \O_{\CP} \cdot e^r. 
 $$
 For simplicity, we sketch the argument in the case $r=2$. As usual, denote by $\O_0$ and $K_0$ the ring of local functions at $0\in\CP$ and its 
 fraction field. Assume there exist some elements $a_1,b_1 \in K_0$ and $a_2,b_2 \in \O_0$ such that the $\O_{\CP}$-module generated by 
 \begin{equation}\label{eq:generators}
  a_1 e^1 + a_2 e^2 \quad \mbox{and} \quad  b_1 e^1 + b_2 e^2
 \end{equation}
 is $\partial_z$-invariant. The action of $\partial_z$ reads as 
 \begin{equation}\label{eq:partial-action}
    \partial_z (a_1 e^1 + a_2 e^2) = \left( \mbox{d} a_1 + a_2 \frac{\mbox{d}z}z \right) e^1 + \mbox{d} a_2 e^2, 
 \end{equation}
 and similarly for $a_j$ replaced by $b_j$. 
 With respect to the local coordinate $z$ near $0\in\CP$, we may speak of the pole order of $a_1,b_1$. 
 Assume that at least one of $a_1,b_1$ has a pole of order at least $1$. Assume without loss of generality that 
 the order of the pole of $a_1$ is at least as high as that of $b_1$. Then, the order of the pole of $\mbox{d} a_1$ 
 is strictly larger than the maximum of the pole orders of $a_1,b_1$ and the term $a_2\mbox{d}z/z$ does not change the 
 pole order of the coefficient of $e^1$ on the right-hand side of (\ref{eq:partial-action}), 
 hence (\ref{eq:partial-action}) may not be expressed as a linear combination of $a_1,b_1$ with coefficients in $\O_0$. 
 We thus see that $a_1,b_1 \in \O_0$. Looking at the coefficient of $e^1$ in (\ref{eq:partial-action}) we also see that 
 one must then have $a_2(0) = 0$, and similarly we get $b_2(0) = 0$. Now, the coefficient of $e^2$ in (\ref{eq:partial-action}) is a 
 linear combination of $a_2,b_2$ with coefficients in $\O_0$, hence we get $\mbox{d} a_2 (0) = 0$ and similarly $\mbox{d} b_2 (0) = 0$. 
 It is now easy to see by induction that $a_2, b_2$ vanish to order $k$ for any $k \in \N$. Said differently, $a_2 = 0 = b_2$, 
 and $\mathcal{M}$ is not generated by (\ref{eq:generators}) as an $\O_{\CP} (* \{ 0 \})$-module, a contradiction. 
 \item Let $\partial_z$ act on $\mathcal{M}$ by a logarithmic connection with a regular residue with a single integer eigenvalue $n\in \Z$ at $0$, i.e. 
 up to conjugacy a Jordan block: 
 $$
  \partial_z e^1 = \frac nz e^1, \quad \partial_z e^j = \frac{n e^j + e^{j-1}}z \mbox{ for } j\geq 2. 
 $$
 Then, combining the ideas of Example \ref{ex:minext} (\ref{ex:minext2}) with the above, it is easy to see that a minimal extension is given by 
 $$
  \O_{\CP} (* \{ 0 \}) \cdot e^1 \oplus \cdots \oplus \O_{\CP} (* \{ 0 \}) \cdot e^{r-1} \oplus  \O_{\CP}(n \{ 0 \}) \cdot e^r. 
 $$
  \item Let now $\partial_z$ act on $\mathcal{M}$ by a logarithmic connection with a regular residue with a single non-integer eigenvalue 
  $\mu \in \C \setminus \Z$ at $0$, i.e.
 $$
  \partial_z e^1 = \frac{\mu}z e^1, \quad \partial_z e^j = \frac{\mu e^j + e^{j-1}}z \mbox{ for } j\geq 2. 
 $$
 Then, just as in Example \ref{ex:minext} (\ref{ex:minext3}), a minimal extension is given by $\mathcal{M}$. 
\end{enumerate} 
\end{example}

It follows from Section 1 of \cite{Sab-hi} that for any meromorphic bundle $\mathcal{M}$ a minimal extension 
exists and is unique up to isomorphism; we denote it by $\mathcal{M}_{\min}$. 
The above Examples give the local form of the minimal extension of a meromorphic bundle $\mathcal{D}_{\CP}$-module $\mathcal{M}$ induced by an integrable connection with poles. 
In particular, according to our assumption (\ref{eq:Dinfinity}) and Example \ref{ex:minext} (\ref{ex:minext-irregular}), the minimal extension of the meromorphic bundle $\mathcal{M}$ 
near $\infty$ is equal to $\mathcal{M}$ itself. 
On the other hand, for all $1\leq i \leq n$ and arbitrary $\mu \in \{ \mu_i^s \}_{s=1}^r$ let 
$$
  \psi_i^{\mu} = \ker(\res_{z_i}(\nabla ) - \mu)^r
$$
denote the generalized $\mu$-eigenspace of $\res_{z_i}(\nabla )$. 
We will use the convention that $\psi_i^{\mu} = 0$ for any $\mu \notin \{ \mu_i^s \}_{s=1}^r$. 
Then the above examples show that the minimal extension has an explicit description in terms 
of the kernel and cokernel of the restriction of the residue to the various eigenspaces $\psi_i^{\mu}$. 
This observation will play a key role in the proof of Theorem \ref{thm:main}. 

\subsection{Fourier--Laplace transformation}

Consider the one-dimensional Weyl algebra ${\C}[z]\langle\partial_{z}\rangle$ generated by 
the formal variables $z$ and $\partial_{z}$ subject to the only relation $[z,\partial_{z}]=-1$. 
Let us be given a left ${\C}[z]\langle\partial_{z}\rangle$-module $M$. 
Furthermore, let $\zeta$ be a variable algebraically independent of $z$ and consider the 
Weyl algebra ${\C}[\zeta ]\langle\partial_{\zeta}\rangle$ defined as above, with each occurence of 
$z$ replaced by $\zeta$. We then define the Fourier--Laplace transform $\widehat{M}$ of $M$ as follows: 
as a $\C$-vector space, we set $\widehat{M} = M$, and we let $\zeta,\partial_{\zeta}$ act by 
\begin{align*}
 \zeta \cdot m & = \partial_{z} \cdot m \\
 \partial_{\zeta} \cdot m & = - z \cdot m. 
\end{align*}
It can be easily checked that this then turns $\widehat{M}$ into a left ${\C}[\zeta ]\langle\partial_{\zeta}\rangle$-module. 

Let us now give an equivalent description of the Fourier--Laplace transform $\widehat{M}$ of $M$ in terms of the 
cokernel of a suitable map of modules, according to \cite{KL} Lemma 7.1.4. 
For this purpose, let us consider the two-dimensional Weyl algebra 
${\C}[z, \zeta]\langle\partial_{z}, \partial_{\zeta}\rangle$ generated by the formal variables 
$z,\zeta, \partial_{z}, \partial_{\zeta}$ subject to the relations 
$$                                                                                                                                                 
 [z,\partial_{z}]  = - 1 \quad [\zeta,\partial_{\zeta}]  = - 1, 
$$
an all other generators commuting with each other. 
We consider the module 
$$
    \mathbf{M} = M \otimes_{\C} \C[\zeta].
$$
We turn $\mathbf{M}$ into a left module over ${\C}[z, \zeta]\langle\partial_{z}, \partial_{\zeta}\rangle$ as follows: 
$z,\partial_{z}$ act on $M$ and $\zeta,\partial_{\zeta}$ act on $\C[\zeta]$ in the standard way. 
It then turns out that the kernel of the map 
$$
  \partial_{z} - \zeta : \mathbf{M} \to \mathbf{M}
$$
vanishes, and its cokernel is isomorphic to $M$ as a $\C$-vector space. 
Furthermore, this cokernel space inherits a left ${\C}[\zeta ]\langle\partial_{\zeta}\rangle$-module structure 
from $\mathbf{M}$, with the action of $\partial_{\zeta}$ induced by the action of  
$$
  \partial_{z} + \partial_{\zeta} - z - \zeta 
$$
on $\mathbf{M}$. An easy argument shows that the cokernel space endowed with this 
left ${\C}[\zeta ]\langle\partial_{\zeta}\rangle$-module structure is isomorphic to $\widehat{M}$.

\subsection{$\mathcal{D}_{\CP}$-modules and ${\C}[z]\langle\partial_{z}\rangle$-modules}

To any left $\mathcal{D}_{\CP}$-module $\mathcal{M}$ we may associate a left module $M$ over 
${\C}[z]\langle\partial_{z}\rangle$ by the following globalization procedure: we let 
$$
  M = \Gamma (\C, \mathcal{M})
$$
be the $\C$-vector space of sections of $\mathcal{M}$ over $\C$, we let $z$ act on such a section 
by multiplication by $z$, and $\partial_{z}$ act on $M$ via the action of $\partial_{z}$ on $\mathcal{M}$. 

Conversely, given any left ${\C}[z]\langle\partial_{z}\rangle$-module $M$, we may take the sheafification $\mathcal{M}$ of $M$. 
Namely, we may define $\mathcal{M}$ as the sheaf of $\O (\CP )$-modules associated to the preseheaf in analytic topology 
$$
  U \mapsto M \otimes_{\O (\CP )} \O (U), 
$$
and we may let $\partial_z$ act on $\mathcal{M}$ by 
$$
  \partial_z \cdot (f m) = f \partial_z \cdot  m + \frac{\d f}{\d z} m 
$$
for any $f\in \O (U), m \in \mathcal{M} (U)$. Then, $\mathcal{M}$ is a left $\mathcal{D}_{\CP}$-module.

\subsection{Minimal Fourier--Laplace transformation}

For any integrable bundle $(E, \nabla )$ with singularities as in Section \ref{sec:harmonic-bundles}, 
consider the minimal extension $\mathcal{M}_{\min}$ associated to the meromorphic bundle $\mathcal{M}$,  
and denote by $M_{\min}$ the corresponding left ${\C}[z]\langle\partial_{z}\rangle$-module. 
Consider moreover the Fourier--Laplace transform $\widehat{M_{\min}}$, 
and denote the associated left $\mathcal{D}_{\CP}$-module by $\widehat{\mathcal{M}_{\min}}$. 
We then define the minimal Fourier--Laplace transform of $\mathcal{M}_{\min}$ as $\widehat{\mathcal{M}_{\min}}$.  

In the same way as in Proposition 4.2 \cite{Sz-laplace}, Assumption \ref{assn:main} (\ref{assn:main0}) implies the following result. 
\begin{prop}\label{prop:min-FL}
 The $\mathcal{D}_{\CPt}$-module $\widehat{\mathcal{M}_{\min}}$ is the minimal extension of a meromorphic integrable bundle $\widehat{\mathcal{M}}$ 
 with singularities at the set $\Pt$ of eigenvalues of the leading-order term $A$ of $\nabla$ at infinity. 
 In particular, the minimal Fourier--Laplace transform of $\widehat{\mathcal{M}_{\min}}$ is isomorphic to 
 $(-1)^*\mathcal{M}_{\min}$, where $(-1):\C \to \C$ is the map $z \mapsto -z$. 
\end{prop}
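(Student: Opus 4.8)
The plan is to prove Proposition \ref{prop:min-FL} by reducing the global statement to a collection of purely local computations at the singular points, in the spirit of the proof of Proposition 4.2 \cite{Sz-laplace}. First I would recall that the Fourier--Laplace transform $\widehat{M_{\min}}$ of the $\C[z]\langle\partial_z\rangle$-module $M_{\min}$ is automatically a holonomic $\C[\zeta]\langle\partial_\zeta\rangle$-module, since Fourier--Laplace transformation preserves holonomicity; moreover it is known (by stationary phase, or by the classical computation of Katz \cite{KL}) that the singular points of $\widehat{\mathcal{M}_{\min}}$ in $\C_\zeta$ are precisely the slopes of the irregular part of $\mathcal{M}_{\min}$ at $\infty$, together with possibly $\zeta = \infty$. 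By Assumption \ref{assn:main}(\ref{assn:main0}) and the explicit local form of $\mathcal{M}_{\min}$ near $z_1,\ldots,z_n$ recorded in Examples \ref{ex:minext} and \ref{ex:minext-higher-rank} — where the minimal extension at a logarithmic singularity with non-integer residue (resp. integer residue) is a locally free lattice not preserved by $\partial_z$ of a specific type — the local monodromy/formal data of $\mathcal{M}_{\min}$ at each $z_i$ contributes only tame (regular singular) behaviour to the transform, so these points do not produce irregularity at $\zeta = \infty$. The leading-order term $A$ at $\infty$ of $\nabla$, by (\ref{eq:Dinfinity}), has the form $A\,dz$ with $A$ semisimple and eigenvalues $\Pt$; under Fourier--Laplace this exponential factor $e^{A z}$ becomes, fiberwise over each eigenvalue $a \in \Pt$, a shift producing a first-order pole of the transformed connection at $\zeta = a$. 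Hence $\widehat{\mathcal{M}_{\min}}$ is, away from $\Pt$, an integrable bundle, and near each point of $\Pt$ its local structure is regular singular; this identifies $\widehat{\mathcal{M}_{\min}}$ as the minimal extension of a meromorphic bundle $\widehat{\mathcal{M}}$ on $\CPt$ with singular locus $\Pt$ (minimality being inherited because Fourier--Laplace of a module with no sub/quotient supported at a point is again such, using the self-duality of the construction).

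The key steps, in order, are: (i) establish that $\widehat{M_{\min}}$ is holonomic and locate its singular support using the known correspondence between singularities of the transform and the characteristic data (slopes, eigenvalues) of the original module at $\infty$ and at finite points; (ii) check that under Assumption \ref{assn:main}(\ref{assn:main0}) the finite singularities of $\mathcal{M}_{\min}$ contribute no jump in the generic rank of the transform and no irregularity at $\zeta=\infty$, so that $\widehat{\mathcal{M}_{\min}}$ is generically a connection of the expected rank and its only finite singularities lie in $\Pt$; (iii) verify that at each $a \in \Pt$ the transformed connection has a regular (indeed logarithmic, up to a meromorphic gauge) singularity, so $\widehat{\mathcal{M}_{\min}}$ is the minimal extension of its associated meromorphic bundle $\widehat{\mathcal{M}}$; (iv) for the last sentence, apply the Fourier inversion formula $\widehat{\widehat{M}} \cong (-1)^* M$ at the level of $\C[z]\langle\partial_z\rangle$-modules (immediate from the defining formulas $\zeta \cdot m = \partial_z \cdot m$, $\partial_\zeta \cdot m = -z \cdot m$ applied twice), and observe that it restricts to an isomorphism of the minimal extensions since $(-1)^*$ commutes with minimal extension and the minimal extension is characterized by having no sub/quotient supported at points.

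The main obstacle, which I expect to require the most care, is step (ii)–(iii): precisely tracking how the \emph{integer} eigenvalues of the residues $\res_{z_i}(\nabla)$ at the finite singularities would otherwise create submodules or quotients in the transform — this is exactly what Assumption \ref{assn:main}(\ref{assn:main0}) is designed to rule out, and one must show that in its presence the local contribution of each $z_i$ to $\widehat{\mathcal{M}_{\min}}$ at $\zeta = \infty$ is a regular singular, middle extension, with no missing or extra lattice. Concretely, one decomposes $M_{\min}$ near each $z_i$ according to the generalized eigenspaces $\psi_i^\mu$ of $\res_{z_i}(\nabla)$ as in the examples, computes the Fourier--Laplace transform of each elementary piece $\O_{\CP}(k\{z_i\}) e^{z_i \zeta}$-type factor, and checks that the resulting local behaviour at $\zeta = \infty$ is regular singular precisely because no residue is an integer; this is the analogue of the computation in Proposition 4.2 \cite{Sz-laplace}, and I would present it by that reference together with the local normal forms already displayed in Examples \ref{ex:minext}--\ref{ex:minext-higher-rank}. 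The remaining items (i) and (iv) are formal: (i) is a standard property of the algebraic Fourier--Laplace transform, and (iv) is a two-line verification from the definitions plus the uniqueness of the minimal extension established via \cite{Sab-hi}.
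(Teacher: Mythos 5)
The paper's proof is a two--line reduction: a nonzero sub- or quotient module of $\widehat{\mathcal{M}_{\min}}$ supported at $\zeta_l\in\Pt$ corresponds, under inverse Fourier--Laplace transform, not to a punctual sub/quotient of $\mathcal{M}_{\min}$ but to a sub/quotient on which $\partial_z$ acts through a connection of the form $\d-\zeta_l\,\d z$, i.e.\ to a global $\nabla_{\zeta_l}$-flat section; and the non-existence of such sections (and, dually, of such quotients) is exactly the vanishing of the degree $0$ and $2$ hypercohomology established in Proposition \ref{prop:hypercohomology}, which rests on Assumption \ref{assn:main}~(\ref{assn:main0}) \emph{at the irregular point $z_0=\infty$}. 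Your proposal misses this mechanism, and the parenthetical you use in its place --- ``minimality being inherited because Fourier--Laplace of a module with no sub/quotient supported at a point is again such'' --- is false as a general principle: the Fourier--Laplace transform of a punctual module is an exponential line, not a punctual module, so having no punctual sub/quotients is \emph{not} a Fourier-invariant property (e.g.\ $\C[z]$ with the trivial connection has no punctual sub/quotients, yet its transform is $\delta_0$). This is precisely the point the Proposition has to address, and it is the one place where Assumption \ref{assn:main}~(\ref{assn:main0}) enters; your sketch instead attributes that assumption to the finite singularities $z_i$, whereas part (\ref{assn:main0}) concerns only $i=0$ (the residue $C$ at infinity), and the finite points are governed by parts (\ref{assn:main1})--(\ref{assn:main2}).

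A secondary error: you assert that the regular singularities at $z_1,\dots,z_n$ ``contribute only tame behaviour'' and ``do not produce irregularity at $\zeta=\infty$.'' Stationary phase says the opposite: a regular singularity at a finite point $z_i$ produces a slope-one irregular contribution $e^{z_i\zeta}$ at $\zeta=\infty$ of the transform (compare Theorem \ref{thm:st_phase1}, where $\Dt^{(1,0)}+z_1\,\d\zeta$ --- not $\Dt^{(1,0)}$ itself --- is logarithmic at $\infty$). This does not by itself derail the statement (irregular formal pieces are automatically minimal, as in Example \ref{ex:minext}~(\ref{ex:minext-irregular})), but together with the first point it shows the local analysis you outline is not tracking the correct obstruction. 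To repair the argument, replace steps (ii)--(iii) by the identification of punctual sub/quotients of $\widehat{\mathcal{M}_{\min}}$ at $\zeta_l$ with global $\nabla_{\zeta_l}$-horizontal sub/quotient line bundles of $\mathcal{M}_{\min}$, and then quote Proposition \ref{prop:hypercohomology}. Your step (iv) (Fourier inversion plus uniqueness of the minimal extension) is fine.
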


\begin{proof}
 Any sub- or quotient module of $\widehat{\mathcal{M}_{\min}}$ supported at $\zeta_l \in \Pt$ would necessarily come from a 
 sub- or quotient module of $\mathcal{M}$ with $\partial_z$-action induced by a connection of the form $\d - \zeta_l \d z$. 
 As already shown in Proposition \ref{prop:hypercohomology}, Assumption \ref{assn:main} (\ref{assn:main0}) excludes 
 the existence of such a sub- or quotient bundle. A similar argument works for $\zeta = \infty$. 
\end{proof}

\begin{thm}\label{thm:main}
 The meromorphic integrable bundle $\widehat{\mathcal{M}}$ appearing in Proposition \ref{prop:min-FL} is the meromorphic 
 integrable bundle associated to the Nahm-transform $(\Et, \widehat{\nabla})$. 
\end{thm}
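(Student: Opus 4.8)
The plan is to show that the two de Rham objects on $\CPt$ — the transformed connection $(\Et, \nablat)$ constructed via the $L^2$-hypercohomology/direct image in Section \ref{sec:transformed-parabolic}, and the meromorphic bundle $\widehat{\mathcal{M}}$ underlying the minimal Fourier--Laplace transform $\widehat{\mathcal{M}_{\min}}$ — are isomorphic as meromorphic integrable bundles away from $\Pt \cup \{\infty\}$, and that this isomorphism extends correctly. The natural strategy is to compare both sides to a common intermediate object built from the two-dimensional picture on $\CP \times \CPt$. Recall that Proposition \ref{prop:locally_free} expresses $\Et = \R^1(\pi_2)_*\dR_{\bullet}$, the first hyperdirect image of the relative twisted de Rham complex $\pi_1^*\F \xrightarrow{\nabla_\zeta} \pi_1^*\G\otimes K_{\CP}(2z_0+z_1+\cdots+z_n)$. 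On the Fourier--Laplace side, the Katz--Laumon description recalled in Section \ref{sec:Laplace} exhibits $\widehat{M_{\min}}$ as the cokernel of $\partial_z - \zeta$ acting on $M_{\min}\otimes_{\C}\C[\zeta]$, which is exactly a (global, algebraic) incarnation of the relative de Rham cohomology $\R^1(\pi_2)_*$ of the complex $\mathcal{M}_{\min} \boxtimes \O \xrightarrow{\partial_z - \zeta} \mathcal{M}_{\min}\boxtimes\O$ on $\C \times \Ct$. So both sides are relative $H^1_{\dR}$ of a twisted-by-$(-\zeta\,dz)$ de Rham complex, and the content of the theorem is that the lattice $\F$ (resp. $\G$) and the lattice $\mathcal{M}_{\min}$ (via its explicit local description at the $z_i$) produce the \emph{same} sheaf after $\R^1(\pi_2)_*$, together with the same $\partial_\zeta$-action.

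The key steps, in order: \textbf{(1)} Identify the $\partial_\zeta$-action. On the Nahm side, $\nablat$ is defined in (\ref{eq:Dt}) by $\hat\pi_\zeta \circ (\widehat{\d} - z\,d\zeta)$, i.e. the direct-image connection obtained by differentiating in $\zeta$ and using that multiplication by $z$ is the only $\zeta$-dependence after twisting; this matches precisely the Katz--Laumon prescription that $\partial_\zeta$ acts on the cokernel via $\partial_z + \partial_\zeta - z - \zeta$, since on the quotient the $\partial_z - \zeta$-relation kills $\partial_z - \zeta$ and leaves $\partial_\zeta - z \equiv -z$ up to the trivial $\partial_\zeta$ on the $\C[\zeta]$-factor. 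So the connections agree once the underlying sheaves are identified. \textbf{(2)} Identify the underlying meromorphic bundles away from the singular fibers. Away from $\Pt\cup\{\infty\}$ the $L^2$ condition is vacuous and Proposition \ref{prop:L2dR} already identifies $\Vt_\zeta$ with $\H^1$ of the algebraic de Rham complex; one checks that enlarging $\F,\G$ to the full meromorphic bundle $\mathcal{M} = E(*(z_0+\cdots+z_n))$ does not change $\H^1$ (the quotient complexes are supported at the $z_i$ and acyclic there by the same argument as in the proof of Claim \ref{clm:qis}), so $\Et(*\widehat{D}_{\red}) \cong \widehat{M}(*\Pt)$ as $\O$-modules, compatibly with $\partial_\zeta$. \textbf{(3)} Match the lattices: this is where Assumption \ref{assn:main} and the explicit frames $\f_i^s, \g_i^s$ of Section \ref{sec:dR} enter. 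One must verify that the lattice $\F$ inside the de Rham complex is, fiberwise over generic $\zeta$, the one that produces the minimal extension on the $\widehat{\zeta}$-side — equivalently, that the hypercohomology computed with $\F,\G$ has no sub- or quotient $\mathcal{D}_{\CPt}$-module supported at a point of $\Pt$ or at $\infty$. But this is exactly the content of Proposition \ref{prop:hypercohomology} (vanishing of $\H^0$ and $\H^2$ over all of $\CPt$) together with Proposition \ref{prop:min-FL}; combining them shows that $(\Et,\nablat)$ \emph{is} a minimal extension of its restriction to $\Ct\setminus\Pt$, and by uniqueness of minimal extensions (Sabbah, cited after Example \ref{ex:minext-higher-rank}) it coincides with $\widehat{\mathcal{M}_{\min}}$. \textbf{(4)} Finally, apply the Fourier--Laplace inversion in Proposition \ref{prop:min-FL}: since minimal FL-transform of $\widehat{\mathcal{M}_{\min}}$ returns $(-1)^*\mathcal{M}_{\min}$, and the twist in (\ref{eq:Dt}) is by $-z\,d\zeta$ (matching the sign of the inversion), the meromorphic integrable bundle $\widehat{\mathcal{M}}$ of Proposition \ref{prop:min-FL} is exactly the one associated to $(\Et,\nablat)$.

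The main obstacle I expect is \textbf{Step (3)}: reconciling the two \emph{local} descriptions at the finite singularities $z_i$. The Nahm construction uses the parabolic data and the refined frames $\{\f_i^s\}$ (with their various sub-cases governed by the sign of $\beta_i^{j(s)}$, the integer $k(s)$ from the weight filtration, and whether $\mu_i^s = 0$), whereas the minimal extension $\mathcal{M}_{\min}$ is described intrinsically via $\ker$ and $\coker$ of $(\res_{z_i}(\nabla) - \mu)$ on the generalized eigenspaces $\psi_i^\mu$ (the formula in the commented-out block). Showing these give quasi-isomorphic de Rham complexes near each $z_i$ requires a careful case analysis exactly paralleling the one in Claim \ref{clm:qis} and Proposition \ref{prop:L2dR}, using Assumption \ref{assn:main}\,(\ref{assn:main1})--(\ref{assn:main2}) to control the non-resonant/resonant and nilpotent/semisimple pieces; the case $\beta_i^j = 0 = \mu_i^s$ (the "deletion" case \ref{case:deletion}) is the delicate one, since that is precisely where $\mathcal{M}_{\min}$ differs from $\mathcal{M}$ by a lattice shift and where the frame $\f_i^s = \e_i^s$ is unshifted. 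The cleanest way around the bookkeeping is to avoid it entirely by invoking Proposition \ref{prop:hypercohomology} + uniqueness of minimal extensions as in the sketch above — that reduces the whole theorem to the vanishing statements already proved, plus the formal identification of $\partial_\zeta$-actions in Step (1), which is routine.
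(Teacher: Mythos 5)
Your overall framework --- realizing both sides as $\R^1(\pi_2)_*$ of a relative de Rham complex twisted by $-\zeta\,\d z$ and matching the $\partial_\zeta$-actions via the Katz--Laumon cokernel description --- is the same as the paper's, and your Steps (1) and (4) are fine. The genuine gap is in Step (2), and Step (3) does not repair it. It is \emph{not} true that enlarging $\F,\G$ to the full meromorphic bundle $\mathcal{M}=E(*(z_0+\cdots+z_n))$ leaves $\H^1$ unchanged: the quotient complex supported at $z_i$ fails to be acyclic precisely in the deletion case $\beta_i^{j(s)}=0=\mu_i^s$. Concretely, on the weight-zero part of $\psi_i^0$ the residue acts by zero (Assumption \ref{assn:main}~(\ref{assn:main1})), so there $\mathcal{M}/\mathcal{M}_{\min}$ is a sum of copies of $\C\{w\}[w^{-1}]/\C\{w\}$ with $\partial_z-\zeta$ acting by $w^{-k}\mapsto -k\,w^{-k-1}-\zeta w^{-k}$; this map is injective but has a one-dimensional cokernel per basis vector (spanned by $w^{-1}$) --- this is just the fact that the Fourier transform of a delta-module is the rank-one module generated by $e^{z_i\zeta}$. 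Hence the de Rham complex built from $\mathcal{M}$ computes the Fourier transform of $\mathcal{M}$, whose generic rank exceeds that of $\widehat{\mathcal{M}_{\min}}$ by $\sum_i\dim\psi_i^0$, and the two localized objects are genuinely different meromorphic bundles. (Note also that Claim \ref{clm:qis}, which you invoke for the acyclicity, explicitly excludes these deletion-case vectors from the quotients $\F_{\beta}/\F_{\beta'}$ for exactly this reason.) Your fallback via Proposition \ref{prop:hypercohomology} plus uniqueness of minimal extensions is circular: uniqueness identifies two minimal extensions only \emph{after} their restrictions to $\Ct\setminus\Pt$ have been identified as meromorphic integrable bundles, and that identification is precisely the assertion of the theorem.

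The paper closes this gap by proving directly that the lattice complex $\F\xrightarrow{\nabla_\zeta}\G\otimes K_{\CP}(2\cdot z_0+z_1+\cdots+z_n)$ is quasi-isomorphic to $\mathcal{M}_{\min}\xrightarrow{\partial_z-\zeta}\mathcal{M}_{\min}$, not to the complex built from $\mathcal{M}$. Under Assumption \ref{assn:main} the minimal extension has the explicit local form $\bigoplus_{\mu\neq 0}\psi_i^\mu\otimes\C\{z-z_i\}[(z-z_i)^{-1}]\oplus\psi_i^0\otimes\C\{z-z_i\}$, which is exactly the inductive limit of the sheaves $\F_{\beta},\G_{\beta}$ over $\beta$ --- the deletion-case directions are never localized in this limit --- and Claim \ref{clm:qis} shows that all the complexes $\dR_{\beta,\zeta}$ are quasi-isomorphic to one another, so the limit complex is quasi-isomorphic to $\dR_{0,\zeta}$. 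If you want to keep your structure, replace ``full meromorphic bundle $\mathcal{M}$'' by ``$\mathcal{M}_{\min}$'' in Step (2) and carry out that local comparison; this is the bookkeeping you were hoping to avoid, and it cannot be avoided.
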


\begin{proof}
 The key point is to show the following statement.

 \begin{prop}
  For all $\zeta \in \Ct \setminus \Pt$, the complex (\ref{eq:nabla-extension}) is quasi-isomorphic to 
  \begin{equation}\label{eq:minimal-dR}
       \mathcal{M}_{\min} \xrightarrow{\partial_z - \zeta} \mathcal{M}_{\min}. 
  \end{equation}
 \end{prop}
 \begin{proof}
 In view of Assumptions \ref{assn:main} (\ref{assn:main1}) and (\ref{assn:main2}), 
 the formula for $\mathcal{M}_{\min}$ given in Examples \ref{ex:minext} and \ref{ex:minext-higher-rank} is quite simple. 
 Indeed, the only integer eigenvalue appearing in $A_i$ is $\mu_i^s = 0$, and by Assumption \ref{assn:main} (\ref{assn:main1}) 
 we have 
 $$
  \ker (\res_{z_i}(\nabla ) |_{ \psi_i^{0}}) = \psi_i^{0}, \quad \im (\res_{z_i}(\nabla ) |_{ \psi_i^{0}} ) = 0. 
 $$
 We infer that 
 $$
  \mathcal{M}_{\min} = \bigoplus_{i=1}^n \left( \bigoplus_{\mu \neq 0 } \psi_i^{\mu} \otimes_{\C} \C\{ z-z_i \}[(z-z_i)^{-1}]  \right) \oplus  
   \left( \psi_i^{0} \otimes_{\C} \C\{ z-z_i \} \right) 
 $$
 as a ${\C}[z]\langle\partial_{z}\rangle$-module. 
 Now, it follows from the definition of $\F_{\beta}, \G_{\beta}$ in Subsection \ref{ssec:transforming-parabolic} and 
 Assumption \ref{assn:main} (\ref{assn:main2}) 
 that the quasi-coherent $\O_{\CP}$-module $\mathcal{M}_{\min}$ is the inductive limit of the coherent $\O_{\CP}$-modules $\F_{\beta}, \G_{\beta}$ 
 as $\beta \to \infty$. 
 Claim \ref{clm:qis} shows that the corresponding filtered twisted de Rham complexes $\dR_{\beta, \zeta}$ are all 
 quasi-isomorphic to each other. 
 As the complex (\ref{eq:minimal-dR}) is an inductive limit of the complexes $\dR_{\beta, \zeta}$, and these latter 
 are all quasi-isomorphic to each other, we infer that (\ref{eq:minimal-dR}) is quasi-isomorphic to $\dR_{\beta, \zeta}$ 
 for any $\beta \in \R$. This then holds in particular for $\beta = 0$. 
 \end{proof}
With the proposition established, the line of argument of Section 5 of \cite{Sz-laplace} proves the theorem. 
\end{proof}

Just as in Theorem 4.1 \cite{Sz-laplace}, the above theorem has the following consequence. 
\begin{cor}\label{cor:dR-isom} 
 The transform (\ref{eq:Nahm}) preserves the de Rham complex structure of the moduli spaces. 
\end{cor}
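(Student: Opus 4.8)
The plan is to deduce the statement from the identification of Theorem \ref{thm:main} together with the involutivity of minimal Fourier--Laplace transformation recorded in Proposition \ref{prop:min-FL}. First I would fix what is meant: on either side the de Rham moduli space is the analytic moduli space of (parabolic) integrable connections with the fixed local singularity behaviour of Section \ref{sec:harmonic-bundles}, equivalently of the associated minimal-extension $\mathcal{D}$-modules together with the extension data, and its complex structure is the one implicitly used when constructing $\Et$ through the proper-pushforward formula (\ref{eq:extension}). The assertion to prove is then that $\Nahm$ is a biholomorphism between the de Rham moduli space of the input data and that of the transformed data.

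Next I would establish holomorphicity. By Theorem \ref{thm:main} the bundle-with-connection $(\Et,\nablat)$ underlying $\Nahm(V,F_i^j,\dbar^{\E},\nabla,h)$ is precisely the meromorphic integrable bundle attached to $\widehat{\mathcal{M}_{\min}}$, and the passage $(E,\nabla)\mapsto\mathcal{M}_{\min}\mapsto\widehat{\mathcal{M}_{\min}}$ is purely algebraic: $\mathcal{M}_{\min}$ is produced from $(E,\nabla)$ by the explicit kernel/cokernel recipe of Examples \ref{ex:minext}--\ref{ex:minext-higher-rank}, and Fourier--Laplace is the elementary cokernel construction $\coker(\partial_z-\zeta)$. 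All of these operations are functorial and flat in families, hence induce a morphism of analytic moduli spaces; alternatively one reads directly off (\ref{eq:nabla-extension}) that the family $\dR_\bullet$ of twisted de Rham complexes, and therefore $\Et=\R^1(\pi_2)_*\dR_\bullet$ and the induced $\nablat$, depend holomorphically on the moduli parameters of the input. Either way $\Nahm$ is holomorphic.

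For bijectivity I would invoke Proposition \ref{prop:min-FL}: applying minimal Fourier--Laplace to $\widehat{\mathcal{M}_{\min}}$ returns $(-1)^*\mathcal{M}_{\min}$. Hence $\Nahm\circ\Nahm$ coincides on de Rham moduli spaces with the pullback by the automorphism $z\mapsto -z$ of $\CP$ (together with the evident relabelling of the singular points), which is manifestly a biholomorphism; consequently $\Nahm$ is a bijection whose inverse is again of Fourier--Laplace type, hence holomorphic by the same argument. A holomorphic bijection of reduced analytic spaces with holomorphic inverse is an isomorphism, so $\Nahm$ preserves the de Rham complex structure. This is the scheme of Theorem 4.1 of \cite{Sz-laplace}, transposed once Theorem \ref{thm:main} is in hand.

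The main obstacle, exactly as in \cite{Sz-laplace}, is not the pointwise statement but the compatibility with base change: one must check that the minimal-extension operation and the cokernel construction commute with pullback along the base of a holomorphic family of input data, so that $\Nahm$ is an isomorphism of complex analytic families and not merely a bijection of point sets, and that the transformed singularity parameters (the parabolic weights and residue eigenvalues, whose transforms are determined in Section \ref{sec:transformation}) vary holomorphically. Granting this family version of Theorem \ref{thm:main}, the remainder of the argument is formal.
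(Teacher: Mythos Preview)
Your proposal is correct and follows essentially the same approach as the paper: the paper's entire argument is the one-line reference ``Just as in Theorem 4.1 \cite{Sz-laplace}, the above theorem has the following consequence,'' and you have simply unpacked what that scheme amounts to --- holomorphicity from the algebraic nature of (\ref{eq:nabla-extension}) and Theorem~\ref{thm:main}, and invertibility from the involutivity in Proposition~\ref{prop:min-FL} --- exactly as you yourself note at the end of your second paragraph.
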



\section{Transformation of the singularity parameters}\label{sec:transformation}

In this section we describe the transformation of the singularity parameters under $\Nahm$ in the most simple situation where the 
residues of the associated Higgs bundle are regular and satisfy the assumptions made in \cite{Sz-FM34}. 
We plan to return to the case of non-necessarily regular residue in ongoing joint work with Takuro Mochizuki. 

\subsection{Irregular singularity of the transformed Higgs bundle}\label{subsec:irreg_st_phase}
We first treat the case of $\infty \in \CPt$. 
We denote by $J_s(\mu)$ a Jordan block of dimension $s$ for the eigenvalue $\mu$. 
For ease of notation, in the next theorem we assume $n=1$ and lift the subscript $i$ wherever applicable. 
\begin{thm}\label{thm:st_phase1}
Assume that the conditions of Proposition \ref{prop:par_str} are satisfied, and in addition that for any $1 \leq s\neq s' \leq r$ 
the following conditions hold: 
\begin{enumerate}
 \item  $\mu^s - \beta^{j(s)} = \mu^{s'} - \beta^{j(s')}$ implies $\beta^{j(s)} = \beta^{j(s')}$ 
    (and then necessarily $\mu^s = \mu^{s'}$ too);
 \item the graded residue $\res_{z_1}(\nabla )^j$ is regular, i.e. has Jordan decomposition 
  \begin{equation}\label{eq:Jordan}
   \bigoplus_m J_{s_m} (\mu^{j,m} ), 
  \end{equation}
    where $\mu^{j,m} \neq \mu^{j,l}$ for $m\neq l$, and $s_m \in \N$. 
\end{enumerate}
Then $\Dt^{(1,0)} + z_1 \d \zeta$ has a logarithmic singularity at $\zeta = \infty$ with respect to the extension of $\Et$ described 
in Section \ref{sec:transformed-parabolic}, the parabolic structure is compatible, and generically the $j^{th}$ graded piece of 
its residue (corresponding to the weight $\beta^j$) has the Jordan decomposition 
$$
  \bigoplus_{m|\mu^{j,m} \neq \beta^{j}}  J_{s_m} (-\mu^{j,m} ).
$$
\end{thm}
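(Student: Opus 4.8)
The strategy is to work near $\zeta = \infty$ using the de Rham description of $\widehat{\mathcal{E}}$ from Section \ref{sec:transformed-parabolic}, together with the identification of Nahm transformation with minimal Fourier--Laplace transformation established in Theorem \ref{thm:main}. By that theorem, the meromorphic bundle underlying $(\widehat{\mathcal{E}}, \widehat{\nabla})$ is $\widehat{\mathcal{M}}$, and by Proposition \ref{prop:min-FL} this has singularities exactly at $\widehat{P}$; in particular it is \emph{regular} at $\zeta = \infty$, since under Fourier--Laplace the irregular second-order pole at $\infty \in \mathbf{P}^1$ (governed by the semi-simple leading term $A$) corresponds to the set $\widehat{P}$ of eigenvalues of $A$ viewed as first-order poles in the $\zeta$-plane, and what sits over $\zeta = \infty$ comes from the \emph{logarithmic} singularities at $z_1,\ldots,z_n$. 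So the first step is to pin down the behaviour of the stalk of $\mathcal{M}_{\min}$ at a single logarithmic point $z_1$, which by Assumption \ref{assn:main} and the explicit formula for $\mathcal{M}_{\min}$ in the proof of Theorem \ref{thm:main} decomposes, on each graded piece $\psi_1^{\mu^{j,m}}$, according to the regular Jordan block $J_{s_m}(\mu^{j,m})$: for $\mu^{j,m} \notin \mathbf{Z}$ (equivalently, since the only integer eigenvalue permitted is $0$, for $\mu^{j,m}\neq 0$) the block contributes a free rank-$s_m$ meromorphic module, while for $\mu^{j,m}= 0$ it contributes $\psi_1^0 \otimes \mathbf{C}\{z-z_1\}$ with nilpotent residue acting trivially.

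\textbf{The stationary-phase computation.} Next I would carry out the local Fourier--Laplace (stationary phase) analysis at $\zeta = \infty$. The complex (\ref{eq:minimal-dR}), namely $\mathcal{M}_{\min}\xrightarrow{\partial_z - \zeta}\mathcal{M}_{\min}$, computes $\widehat{\mathcal{E}}_\zeta$ by Theorem \ref{thm:main}; to extract the residue of $\widehat{\nabla}$ at $\zeta=\infty$ one passes to the variable $\eta = 1/\zeta$ and analyses the asymptotics as $\eta\to 0$. A local section supported near $z_1$ with $\partial_z$-action having residue $\mu$ (mod the Jordan structure) transforms, under the local stationary phase formula (for instance Propositions 2.11, 2.13 of \cite{Sab-isomonodromy} as cited in Section \ref{sec:harmonic-bundles}, or the Fourier--Laplace dictionary in \cite{Mal}, \cite{KL}), into a local piece of $\widehat{\mathcal{M}}$ at $\zeta=\infty$ with residue $-\mu$; the sign is exactly the $z\mapsto -z$ of Proposition \ref{prop:min-FL} together with the $\partial_\zeta = -z$ convention, and the nilpotent part (the off-diagonal $1$'s of $J_{s_m}$) is transported without change of Jordan type. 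This gives, before accounting for the lattice refinement, the Jordan decomposition $\bigoplus_m J_{s_m}(-\mu^{j,m})$ on the graded piece corresponding to $\beta^j$.

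\textbf{Lattice bookkeeping and the genericity hypothesis.} The final ingredient is to identify which of these blocks actually survive in the chosen extension $\widehat{\mathcal{E}}$ of Section \ref{sec:transformed-parabolic} and to locate the graded pieces of the parabolic filtration. Here one uses that the extension at $\infty$ is built from $\mathcal{F}_\beta, \mathcal{G}_\beta$ via (\ref{eq:nabla-extension})--(\ref{eq:transformed_parabolic_sheaf}); a block $J_{s_m}(\mu^{j,m})$ with $\mu^{j,m} \neq \beta^j$ contributes genuinely to the $\beta^j$-graded residue, whereas the block with $\mu^{j,m} = \beta^j$ — excluded in the statement's sum — gets absorbed into the lattice jump defining the neighbouring parabolic piece (this is precisely the phenomenon that in Example \ref{ex:minext} changes $\mathcal{M}$ to $\mathcal{O}_{\mathbf{P}^1}(n\{0\})$ when the residue hits an integer, transposed to the parabolic setting). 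The genericity hypothesis (1), $\mu^s - \beta^{j(s)} = \mu^{s'}-\beta^{j(s')} \Rightarrow \beta^{j(s)}=\beta^{j(s')}$, is what guarantees that distinct graded pieces do not interact under the transform — in Simpson's normalisation (Remark \ref{rem:Assumptions}) $\lambda_i^s = (\mu_i^s - \beta_i^{j(s)})/2$ is the Higgs eigenvalue, so the hypothesis says the non-$\beta$ part of the residue eigenvalues separates the parabolic strata — and hypothesis (2) (regularity of each graded residue) is what makes the answer a clean direct sum of Jordan blocks rather than something requiring a further normal-form argument. The word ``generically'' in the conclusion refers to the open dense locus in the moduli space where no further integral-difference coincidences among the $\mu^{j,m}$ force additional lattice merges.

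\textbf{Main obstacle.} The hard part is the lattice bookkeeping at $\zeta=\infty$: showing precisely that, for the \emph{specific} extension $\widehat{\mathcal{E}}$ fixed in Section \ref{sec:transformed-parabolic} (and not merely up to meromorphic equivalence), the block with $\mu^{j,m}=\beta^j$ drops out of the residue while all others contribute with the stated Jordan type and the stated parabolic weight $\beta^j$. This requires tracking the $(z-z_i)^m$-twists in the definition of $\mathcal{F}_\beta,\mathcal{G}_\beta$ through the stationary-phase transform and matching them against Proposition 5.6 of \cite{Sz-FM34} (used in the proof of Proposition \ref{prop:par_str}); the two genericity hypotheses are exactly what is needed to keep this bookkeeping decoupled block-by-block, but verifying the decoupling rigorously, rather than the eigenvalue computation itself, is where the real work lies.
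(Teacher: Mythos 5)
Your route is genuinely different from the paper's: you stay on the de Rham side and propose to read off the local structure at $\zeta=\infty$ from the stationary phase formula for the minimal Fourier--Laplace transform of $\mathcal{M}_{\min}$, whereas the paper switches to the Dolbeault side via non-abelian Hodge theory (Biquard--Boalch and Simpson's table), describes the transform through the spectral sheaf $M$ on the Hirzebruch surface and its proper transform under the two blow-ups, and then computes directly: for a single Jordan block $J_r(\lambda)$ with $\lambda\neq 0$ and generic constant term, the characteristic polynomial becomes an Eisenstein polynomial in $\zeta'-\lambda=z\zeta-\lambda$, so Newton--Puiseux gives a single branch of full ramification index $r$ in \eqref{eq:converse_Puiseux}, which is what forces the transformed residue to be again a single regular block $J_r(\lambda)$ sitting in the expected parabolic weight. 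For the \emph{meromorphic} connection $\widehat{\mathcal M}$ at $\zeta=\infty$ your plan is viable: under the standing assumptions the only blocks with $\mu^{j,m}=\beta^j$ are the $\mu=\beta=0$ ones, these are exactly the directions where $\mathcal M_{\min}$ is smooth at $z_i$ (no vanishing cycles), so the local Fourier transform drops them, and the remaining blocks go to $J_{s_m}(-\mu^{j,m})$ with unchanged size by the classical formulas you cite. (Your attribution of the exclusion to an ``integer-residue lattice jump'' as in Example \ref{ex:minext}~(\ref{ex:minext2}) is the wrong mechanism, though; the correct one is the absence of vanishing cycles, equivalently the deletion of these directions in the definition of $\F$ and $\G$, equivalently the vanishing of the associated Higgs eigenvalue discussed in the Remark following the theorem.)

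The genuine gap is that everything the theorem asserts beyond the formal isomorphism class of $\widehat{\mathcal M}$ at $\infty$ --- that $\Dt^{(1,0)}+z_1\d\zeta$ is logarithmic \emph{with respect to the specific extension} $\Et$ of Section \ref{sec:transformed-parabolic}, that the parabolic filtration $\Et_\beta$ is compatible with the residue, and that the block $J_{s_m}(-\mu^{j,m})$ lands precisely in the $\beta^j$-graded piece --- is not accessible from the stationary phase formula, which is blind to lattices and to parabolic weights. You correctly flag this ``lattice bookkeeping'' as the main obstacle, but you do not carry it out, and it is exactly where the content of the theorem lies: one must track the $(z-z_i)^m$-twists in $\F_\beta,\G_\beta$ through the transform and match the result against the weighted extension \eqref{eq:transformed_parabolic_sheaf}. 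The paper resolves this by working with the harmonic metric: the extension and filtration at $\infty$ are identified (via Proposition \ref{prop:par_str} and Proposition 5.6 of \cite{Sz-FM34}) with the Dolbeault-side construction, where the spectral-sheaf description makes the weight of each Puiseux branch and the local decoupling of the $\lambda_k$-groups explicit. Without either reproducing that comparison or supplying a de Rham substitute for it (a filtered stationary phase formula), your argument establishes only the Jordan type of the formalized connection at $\infty$, not the statement about the graded residue of the chosen parabolic extension.
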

For  the generalization of the theorem to the case $n\geq 2$, we would need to make the same assumptions as in the case $n=1$ at each of the 
logarithmic points, and the conclusion should be replaced by saying that the transformed flat connection is a direct sum of flat 
connections behaving as in the case $n=1$, up to holomorphic terms. 
The proof of this generalization would then be similar to the case $n=1$, but with more complicated notation, hence we content 
ourselves with proving the theorem in its form as stated above. 
\begin{proof}
The idea is to use non-Abelian Hodge theory to switch to the Dolbeault interpretation of the moduli space, 
and the proof generalizes that of Theorem 4.31 \cite{Sz-these} where the case of regular semi-simple residue was treated. 
Namely, according to Theorem 1 \cite{Biq-Boa}, for every parabolically stable irregular flat connection there exists a unique harmonic metric, 
and therefore an irregular Higgs bundle $(\mathcal{E}, \theta)$ where $\mathcal{E}$ is a holomorphic vector bundle over $\CP$ and $\theta$ is a 
global meromorphic $1$-form valued endomorphism of $\mathcal{E}$, satisfying some properties. The moduli space of irregular Higgs 
bundles is often called irregular Dolbeault space, and the meromorphic integrable connection interpretation is referred to as de Rham space. 
Furthermore, it is shown in \cite{Biq-Boa} that the polar parts of the integrable connection and $\theta$ are related by the following transformation: 
\begin{itemize}
 \item the irregular part of the integrable connection $\nabla$ is equal to twice the irregular part of $\theta$; 
 \item the eigenvalues of the residue and the parabolic weights on the de Rham and Dolbeault sides obey the relations given by Simpson's table 
 (see Remark \ref{rem:Assumptions}); 
 \item the Jordan decompositions of the nilpotent parts of the residue of the connection and of the Higgs field admit blocks of the same size. 
\end{itemize}
In particular, in view of this correspondence, the assumptions of the theorem imply that the entire residue at $z_1$ of the Higgs bundle associated to $(E, \nabla )$ 
(rather than just its $j^{th}$ graded piece) is regular: indeed, any graded piece is regular by the second assumption and the eigenvalues of different graded 
pieces are different by the first assumption and Simpson's transformation table.  

By Proposition 5.6 \cite{Sz-FM34} (see also Proposition 4.22 \cite{Sz-these}), 
in order to construct the transformed Higgs bundle we need to consider the spectral curve 
$$
  \Sigma = (\theta \xi - \mbox{Id}_E \zeta / 2) \subset H = \mathbf{P} (\O_{\CP} \oplus K_{\CP}(2\cdot z_0 + z_1+\cdots + z_n) )
$$
of $\theta$ in the Hirzebruch surface $H$, for the canonical sections $\xi,\zeta$ of 
$$
  \O_{\mbox{rel}}(1), K_{\CP}(2\cdot z_0 + z_1+\cdots + z_n) \otimes \O_{\mbox{rel}}(1)
$$
respectively, and the spectral sheaf 
$$
  M = \coker (\theta \xi - \mbox{Id}_E \zeta / 2)
$$
supported on $\Sigma$. Then $M$ is a torsion-free sheaf, pure of dimension $1$, and of rank $1$ for generic $(\mathcal{E}, \theta)$. 
We will use the birational transformation 
$$
  H \xleftarrow{\omega} H^+ \xrightarrow{\eta} \CP \times \CPt 
$$
where 
\begin{itemize}
 \item $\omega$ is the blow-up of the intersection points of the fibers of $H$ over the points $z_1, \ldots , z_n$ and the $0$-section $\zeta = 0$ of $H$
 \item $\eta$ is the blow-up of the points $(z_i,\infty) \in \CP \times \CPt$. 
\end{itemize}
In Section 5 of \cite{A-Sz} (see also \cite{Sz-BNR}), under some conditions we defined a proper transform functor $\omega^{\sigma}$ for coherent sheaves 
with respect to a blow-up map $\omega$, such that the support of $\omega^{\sigma}(M)$ is equal to the proper transform of $\Sigma$. 
Namely, we set 
$$
  \omega^{\sigma}(M) = \omega^*(M) / M^E, 
$$
where $M^E$ is the subsheaf of $M$ consisting of sections supported on the exceptional divisor $E$ of $\omega$. 
With these preliminaries, a straightforward generalization of Theorem 8.5 \cite{A-Sz} implies that the transformed Higgs bundle is obtained as follows: 
\begin{enumerate}
 \item take the proper transform $\omega^{\sigma} M$ of $M$ on $H^+$;
 \item take the direct image $\eta_* \omega^{\sigma} M$ of $\omega^{\sigma} M$ with respect to $\eta$ on $\CP \times \CPt$;
 \item then the holomorphic bundle $\widehat{\mathcal{E}}$ underlying the transformed Higgs bundle is the direct image $(\pi_2)_* \eta_* \omega^{\sigma} M$ 
  with respect to the second projection $\pi_2: \CP \times \CPt \to \CPt$, and the transformed Higgs field $\widehat{\theta}$ is induced by multiplication by $\frac z2 \d \zeta$. 
\end{enumerate}

Now, $\mathcal{E}$ splits into a direct sum over the ring $\C \{ z-z_1 \}$ according to the eigenvalues $\lambda_1, \ldots, \lambda_K$ of the residue of $\theta$. 
The eigenvalues of $\theta$ are then partitioned into subsets as follows: for each $k$ the $\lambda_k$-group is formed by those eigenvalues whose Puiseux-series 
starts with $\lambda_k z^{-1}$. 
Introduce (replacing $\zeta / \xi$ by $\zeta$) the characteristic polynomial
$$
   \chi_{\theta} (z, \zeta) = \det (\theta - \mbox{Id}_E \zeta / 2)
$$
of $\theta$.
Let $\chi_k (z, \zeta)$ denote the polynomial whose roots are all the eigenvalues in the $\lambda_k$-group of $\theta$, each with multiplicity $1$. 
Then $\chi_{\theta}$ decomposes into the product of these factors: 
$$
  \chi_{\theta} (z, \zeta) = \chi_1 (z, \zeta) \cdots \chi_K (z, \zeta). 
$$
The spectral curve $\Sigma$ also decomposes locally into components: 
$$
  \Sigma = \Sigma_1 \coprod \cdots \coprod \Sigma_K. 
$$
The restrictions of the spectral sheaf $M$ to each of these components define torsion-free rank $1$ coherent sheaves of modules $M_1,\ldots , M_K$ 
over the respective rings of regular functions of $\Sigma_1, \ldots ,\Sigma_K$. 
According to Lemma 5.12 \cite{A-Sz}, we have 
$$
  R^0 \omega_* \omega^{\sigma} M = M 
$$
and $\omega^{\sigma} M$ is pure of dimension $1$, supported on the proper transform $\widetilde{\Sigma}$ of $\Sigma$ with respect to $\omega$. 
In addition, for all $k$ we have 
$$
  \omega^{\sigma} M_k = \omega^* M_k
$$
unless $\lambda_k = 0$, which by assumption holds for at most one value $k\in \{ 1, \ldots ,K\}$, 
and the subsheaf $M^E$ satisfies 
$$
  M^E = M_k^E
$$
for this unique value $k$ if such a value exists, otherwise $M^E = 0$. 
In particular, this implies that 
$$
  \omega^{\sigma}(M) = \oplus_{k=1}^K \omega^{\sigma}(M_k). 
$$
As direct image commutes with direct sums, we infer that 
\begin{equation}\label{eq:transform_decomposition}
   \widehat{\mathcal{E}} = \oplus_{k=1}^K  (\pi_2)_* \eta_* \omega^{\sigma}(M_k),
\end{equation}
and $\widehat{\theta}$ respects this decomposition. 
Consequently, in order to give the Jordan decomposition of the residue of $\widehat{\theta}$, it is sufficient to consider the case $K=1$. 

We now consider the case of a Higgs field with a logarithmic pole at $z_1$ and with residue equal to a Jordan block, and study the local behaviour at infinity of the transformed object. 
We will assume that the logarithmic point is $z_1=0$ with a single parabolic weight $\alpha$ and the residue at this point has 
a single Jordan block of dimension $r$ and with eigenvalue $\lambda$, i.e. 
$$
  \theta = \begin{pmatrix}
            \lambda & 1 & 0 & \cdots & 0 \\
            0 & \lambda & 1 & \cdots & 0 \\
            \vdots & \vdots &  & \ddots & \vdots \\
            0 & 0 & 0 & \cdots & 1 \\
            0 & 0 & 0 & \cdots & \lambda 
           \end{pmatrix} \frac{\d z}z + O(1) \d z.
$$
The case of arbitrary $z_1$ reduces to the case $z_1=0$ simply by replacing the coordinate $z$ by $z-z_1$ throughout the below analysis. 
The assumptions of Proposition \ref{prop:par_str} imply that we only need to consider the case $\lambda \neq 0$, see Remark \ref{rem:Assumptions}. 
We make the further genericity assumption that the bottom left entry $a_{r1}$ of the constant term of $\theta$ does not vanish. 
Now, the characteristic polynomial of the Higgs field reads as 
\begin{equation}\label{eq:charpoly}
   \left( \zeta - \frac{\lambda}z \right)^r + A_1(z)  \left( \zeta - \frac{\lambda}z \right)^{r-1} + \cdots + A_r(z), 
\end{equation}
where for each $1\leq j \leq r$ the coefficient $A_j$ is meromorphic with a pole of order at most $j-1$. 
Observe moreover that we have 
$$ 
  A_r(z) = a_{r1} z^{1-r} + O(z^{2-r}).
$$
Introducing the variable $\zeta' = z \zeta$ we may write (\ref{eq:charpoly}) multiplied by $z^r$ as 
$$
  (\zeta' - \lambda)^r + z A_1(z)  (\zeta' - \lambda)^{r-1} + \cdots + z^r A_r(z). 
$$
The coefficients of this polynomial (other than the leading one) are all divisible by $z$, and by our 
genericity assumption $a_{r1}\neq 0$ the $z$-adic valuation of  $z^r A_r(z)$ is precisely equal to $1$. 
Therefore, this is an Eisenstein polynomial with respect to the variable $\zeta' - \lambda$. 
Hence, according to Newton's theorem its roots may be expressed as a convergent Puiseux-series 
$$
  \zeta'_j -\lambda = \sum_{k=1}^{\infty} c_{k-r} z^{\frac kr} 
$$
for some constants $c_{k-r} \in \C, c_{1-r}\neq 0$, where the index $j$ refers to the choice of an $r^{th}$ root of $z$. 
Obvously, the dimension of the corresponding eigenspace of $\theta$ for each of these spectral points is equal to $1$, 
said differently, $M$ is of rank $1$ over $\Sigma$. We may rewrite the above formula as 
\begin{equation}\label{eq:spectral_Puiseux}
   \zeta_j = \sum_{k=-r}^{\infty} c_k z^{\frac kr},
\end{equation}
with $c_{-r} = \lambda$. Now, for $\lambda \neq 0$ the converse relation reads 
\begin{equation}\label{eq:converse_Puiseux}
  z_j = \sum_{k=r}^{\infty} d_k \zeta^{-\frac kr}
\end{equation}
with 
$$
  d_r = \lambda \neq 0, \quad d_{r+1} = c_{1-r} \lambda^{\frac 1r}\neq 0.
$$
In particular, the converse power series is again of ramification index $r$ so that 
there exist again $r$ distinct solutions $z_j$ with $1\leq j \leq r$ for any given $\zeta\in \C\setminus \{ 0 \}$. 
It follows from the description of $(\widehat{\mathcal{E}}, \widehat{\theta})$ outlined in the previous paragraph that $\widehat{\mathcal{E}}$ is of rank $r$.
Moreover, $\widehat{\theta}$ is logarithmic with respect to the extension because the most singular term in the above Puiseux series is $\zeta^{-1}$ and the 
$1$-form $\zeta^{-1} \d \zeta$ has a first-order pole at $\zeta = \infty$. 
Furthermore, the residue of $\widehat{\theta}$ is conjugate to a Jordan block of dimension $r$ with eigenvalue $\lambda$. 
Finally, it follows from the construction of the transformed parabolic structure that this piece of the residue lies in 
its weight $\alpha$ piece.

\end{proof}

\begin{rk}
Notice that for $\lambda = 0$ the above Puiseux expansion reads 
$$
  \zeta_j = \sum_{k=1-r}^{\infty} c_k z^{\frac kr},
$$
and in the generic case we again have $c_{1-r} \neq 0$. Therefore, in this case the converse relation reads 
$$
  z_l =  \sum_{k=r}^{\infty} e_k \zeta^{-\frac k{r-1}},
$$
with $(e_r)^{r-1} = (c_{1-r})^{-r} \neq 0$, and where the subscript $l$ refers to the choice of an $(r-1)^{th}$ root of $\zeta$. 
In particular, for fixed $\zeta\in\C\setminus \{ 0 \}$ there are $r-1$ distinct solutions $z$ for which $\zeta$ is one of the 
spectral points of $\theta(z)$. 
Therefore, in this case the rank of the transformed Higgs bundle would be equal to $r-1$. 
However, in this case Proposition \ref{prop:par_str} does not apply and we do not get any canonical transformed parabolic structure. 
A related phenomenon has been observed for the Fourier transform of $\mathcal{D}$-modules by Malgrange \cite{Mal} (see also Sabbah \cite{Sab-isomonodromy}) 
and for $\ell$-adic sheaves by G. Laumon \cite{Katz}. 
 Namely, it accounts for the need for microlocalization in the context of the stationary phase formula given in Proposition V.3.6 \cite{Sab-isomonodromy}. 
 On the other hand, part (7) of Theorem 7.5.4. \cite{Katz} states that the dimensions of the Jordan blocks corresponding to the eigenvalue $0$ decrease by $1$ under 
 the local Fourier transform from $0$ to $\infty$. 
 Moreover, as explained in section 7.5.2 \it{op. cit.}, in order to get a counting polynomial that behaves ``nicely'' with respect to Fourier transform, 
 for the trivial character one needs to take into account additional virtual Jordan blocks of size $0$ of the stalk at $\infty$ of the transformed sheaf. 
\end{rk}



\subsection{Logarithmic singularities of the transformed Higgs bundle}

We now turn our attention to the case of the logarithmic singularities of $(\widehat{\mathcal{E}}, \widehat{\theta})$ on $\CP$. 
Recall from \eqref{eq:Dinfinity} the form of the singularity of $\nabla$ at $z_0 = \infty \in \CP$, where 
$A$ is diagonal with not necessarily simple eigenvalues and $C$ is block-diagonal with respect to the 
decomposition of $V|_{\infty}$ into the various eigenspaces of $A$. Let us denote by 
$$
  \Pt = \{ \zeta_1, \ldots , \zeta_{\nu} \}
$$
the eigenvalues of $A$. For any $1 \leq \iota \leq \nu$ let us denote by $C_{\iota}$ the block of $C$ 
corresponding to the $\zeta_{\iota}$-eigenspace of $A$.

\begin{thm}\label{thm:st_phase2}
Assume that the conditions of Proposition \ref{prop:par_str} are satisfied, and in addition that for any  
$1 \leq \iota \leq \nu$ and any eigenvalues $\mu^s, \mu^{s'}$ of $C_{\iota}$ with corresponding parabolic weights 
$\beta^{j}, \beta^{j'}$ the following conditions hold: 
\begin{enumerate}
 \item  $\mu^s - \beta^{j} = \mu^{s'} - \beta^{j'}$ implies $\beta^{j} = \beta^{j'}$ 
    (and then necessarily $\mu^s = \mu^{s'}$ too);
 \item the $j^{th}$ graded piece of the residue $\Gr^{j} C_{\iota}$ for the parabolic filtration is regular. 
\end{enumerate}
Then 
\begin{enumerate}
 \item $\Dt^{(1,0)}$ has a logarithmic singularity at $\zeta = \zeta_{\iota}$ with respect to the extension of $\Et$ described 
    in Section \ref{sec:transformed-parabolic},
 \item the parabolic structure is compatible with its residue,
 \item generically, the $j^{th}$ graded piece of its residue corresponding to a weight $\beta_0^j>0$ is $-\Gr^{j} C_{\iota}$,
 \item generically, the graded piece of its residue corresponding to the weight $\beta_0^0 = 0$ is $-\Gr^{0} C_{\iota} \oplus 0$, 
  where $0$ stands for the $0$ endomorphism of appropriate dimension. 
\end{enumerate}
\end{thm}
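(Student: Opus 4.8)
The plan is to run the argument from the proof of Theorem~\ref{thm:st_phase1} with the two kinds of singularities interchanged: the irregular point $z_0=\infty\in\CP$ of $\nabla$ is now the source of the logarithmic singularities $\zeta_{\iota}\in\CPt$ of the transform. By non-Abelian Hodge theory (Theorem~1 of \cite{Biq-Boa}) I would first pass to the Dolbeault side, so that the transform is encoded by the transformed Higgs bundle $(\Et,\tt)$, constructed, exactly as recalled in the proof of Theorem~\ref{thm:st_phase1}, from the spectral sheaf $M=\coker(\theta\xi-\Id_E\zeta/2)$ of $\theta$ on the Hirzebruch surface $H$, by a proper-transform functor $\omega^{\sigma}$ for suitable blow-ups followed by the direct image $(\pi_2)_*$. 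The only change in the geometric set-up is that the blow-ups must be taken over the intersection points of the fibre of $H$ over $z_0$ with the sections $\zeta=\zeta_{\iota}$, respectively over the points $(z_0,\zeta_{\iota})\in\CP\times\CPt$, because the $L^2$-theoretic transform near a finite point $\zeta_{\iota}$ depends only on the part of the spectral curve lying over a neighbourhood of $z_0$. All computations are therefore local near $(z_0,\zeta_{\iota})$, in the coordinate $w=1/z$ at $z_0$.

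Next I would carry out the same reductions as in Theorem~\ref{thm:st_phase1}. Over $\C\{w\}$ the pair $(\mathcal{E},\theta)$ splits according to the eigenvalues $\zeta_1,\dots,\zeta_{\nu}$ of the leading term $A$ of $\nabla$ at $\infty$, and since direct image commutes with direct sums only the $\iota$-th summand contributes near $\zeta_{\iota}$; on it $\theta$ has a double pole at $z_0$ with scalar leading term $(\zeta_{\iota}/2)\Id$ and a residue determined, via Simpson's table (Remark~\ref{rem:Assumptions}), by the block $C_{\iota}$ of $C$. Assumptions (1) and (2) of the statement, together with Simpson's table and precisely as in the proof of Theorem~\ref{thm:st_phase1}, force this residue to be regular, hence a direct sum of Jordan blocks with pairwise distinct eigenvalues; splitting once more and again using that direct image commutes with direct sums, we reduce to the model in which the residue of $\theta$ at $z_0$ is a single Jordan block $J_{r'}(\rho)$. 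Here $\rho\neq 0$: if $\rho$ were $0$ then $\beta_0^{j}$ would be an eigenvalue of $\res_{z_0}(\nabla)^{j}$, contrary to the hypotheses of Proposition~\ref{prop:par_str}, and Assumption~\ref{assn:main}~(\ref{assn:main0}) already excludes $0$ and the non-zero integers as eigenvalues at $z_0$. This is the structural feature distinguishing the present case from the one at $\infty\in\CPt$ discussed in the Remark following Theorem~\ref{thm:st_phase1}, and it is the reason no rank is lost here.

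With this model, the Puiseux analysis proceeds as expected. Writing the characteristic polynomial of $\theta$ near $(z_0,\zeta_{\iota})$ in the shifted variable $\zeta-\zeta_{\iota}-2\rho w$, and imposing the genericity condition on a suitable entry of the constant term of $\theta$ at $z_0$ that plays the role of $a_{r1}\neq 0$ in Theorem~\ref{thm:st_phase1}, this polynomial is Eisenstein; so by Newton's theorem the spectral curve has near $(z_0,\zeta_{\iota})$ a single branch of ramification index $r'$, with $\zeta-\zeta_{\iota}=2\rho w+c\,w^{1+1/r'}+\cdots$ and $c\neq 0$. Inverting, $w$ (equivalently $z$) is a Puiseux series of ramification $r'$ in $\zeta-\zeta_{\iota}$ with $z\to\infty$ as $\zeta\to\zeta_{\iota}$; hence the eigenvalue one-form $-(z/2)\,\d\zeta$ of $\tt$ has a simple pole at $\zeta_{\iota}$, so $\Dt^{(1,0)}$ is logarithmic there, and the ramification together with the genericity of $c$ makes the corresponding piece of the residue conjugate to a single Jordan block. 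Translating the eigenvalue back through Simpson's table and the sign conventions built into the Nahm transform (the $z\mapsto-z$ of Proposition~\ref{prop:min-FL}), and using the defining frames $\F_{\beta},\G_{\beta}$ of Subsection~\ref{ssec:transforming-parabolic} to read off the residue with respect to the transformed extension, one identifies this piece with the corresponding Jordan block of $-\Gr^{j}C_{\iota}$, sitting in the weight-$\beta_0^{j}$ part of the transformed parabolic structure. Summing over the Jordan blocks of $C_{\iota}$ yields part~(1), part~(3), and the $-\Gr^{0}C_{\iota}$ summand of part~(4).

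Finally, the zero summand in part~(4) and the parabolic compatibility (part~(2)) are where I expect the real work. Over $\C\{\zeta-\zeta_{\iota}\}$ the transformed bundle splits into the part supported on the spectral branches through $(z_0,\zeta_{\iota})$, which carries the logarithmic pole just analysed and has rank $r_{\iota}=\dim(C_{\iota})$, and a complement on which $\tt$ — equivalently $\Dt^{(1,0)}$ — is holomorphic at $\zeta_{\iota}$; being regular, this complement lies in the weight-$0$ piece of the transformed parabolic structure of Section~\ref{sec:transformed-parabolic} and supplies the $0$ endomorphism of part~(4). Compatibility of the parabolic structure with the residue is then in essence Proposition~\ref{prop:par_str}: $\{\Et_{\beta}\}$ is already known to be an $\R$-parabolic sheaf, and the residue of $\Dt^{(1,0)}$ preserves the associated filtration at $\zeta_{\iota}$ because $\nabla_{\beta}$ is a sheaf morphism $\F_{\beta}\to\G_{\beta}\otimes K_{\CP}(2\cdot z_0+z_1+\cdots+z_n)$. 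The main obstacle is the bookkeeping in this last step: correctly matching the Newton-polygon data of the spectral curve at $(z_0,\zeta_{\iota})$, where $\theta$ has a double pole, with the $\{\Et_{\beta}\}$-construction so as to pin down both the Jordan type of the residue of $\Dt^{(1,0)}$ and the weight in which each piece — in particular the regular complement — is placed. The harmonic-metric half of the compatibility statement is deferred to Section~\ref{sec:parabolic}.
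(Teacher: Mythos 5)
Your proposal follows essentially the same route as the paper's proof: pass to the Dolbeault side via non-abelian Hodge theory, split $\mathcal{E}$ locally near $z_0$ according to the eigenvalues $\zeta_\iota$ of $A$, use the two hypotheses together with Simpson's table to conclude that each residue block $B_\iota$ is regular with non-zero eigenvalues, reduce to a single Jordan block, and then run the Eisenstein/Puiseux expansion and its inversion exactly as in Theorem \ref{thm:st_phase1} (now in the coordinate $w=1/z$ at $z_0$) to read off the logarithmic pole and the Jordan type of the residue of the transform. Your explicit identification of the holomorphic complement (spectral points staying away from $z_0$) as the source of the extra $0$ summand in part (4), and your justification that the relevant eigenvalue $\rho$ is non-zero via the first hypothesis of Proposition \ref{prop:par_str}, spell out steps the paper leaves implicit, but they are the intended reading rather than a different argument.
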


\begin{proof}
Again, the idea is to apply the non-Abelian Hodge correspondence, and the analysis is similar to the irregular case 
treated in Subsection \ref{subsec:irreg_st_phase}. 
According to Theorem 1 of \cite{Biq-Boa}, \eqref{eq:Dinfinity} implies that with respect to some holomorphic trivialization of 
$\mathcal{E}$ near $z_0$ the Higgs field reads as 
$$
  \theta = \left( \frac A2 + \frac Bz + O(z^{-2}) \right) \d z, 
$$
with $O(z^{-2}) \d z$ standing for holomorphic $1$-forms.
The holomorphic vector bundle $\mathcal{E}$ then splits holomorphically in some small disc containing $z_0$ as 
\begin{equation}\label{eq:E_decomposition}
  \mathcal{E} \cong \bigoplus_{\iota=1}^{\nu}  \mathcal{E}_{\iota}
\end{equation}
so that $\theta$ respects this decomposition, i.e. that 
\begin{equation}\label{eq:theta_decomposition}
   \theta|_{\mathcal{E}_{\iota}} - \frac{\zeta_{\iota}}2 \Id_{\mathcal{E}_{\iota}} \d z: \mathcal{E}_{\iota} \to \mathcal{E}_{\iota} \otimes K_{\CP} (z_0).
\end{equation}
Let us denote by $B_{\iota}$ the residue of $\theta_{\mathcal{E}_{\iota}}$. 
Again, the parabolic weights and the eigenvalues of the graded residues $\Gr^{j} C_{\iota}$ of the integrable connection $\nabla$ 
and those of $\theta$ fulfill Simpson's relations listed in the proof of Theorem \ref{thm:st_phase1}. 
The eigenvalues of $\theta$ form branches of a multi-valued analytic function and they split into their $\zeta_{\iota}$-group 
as $\iota$ ranges from $1$ to $\nu$. 
For any $\iota$, the $\zeta_{\iota}$-group of the eigenvalues of $\theta$ provides the singular part of the eigenvalues of 
$\widehat{\theta}$ at $\zeta_{\iota}$. 
We may work near one of the logarithmic singularities $\zeta_{\iota}$ of the transformed Higgs bundle. 
For ease of notation, we will assume $\zeta_{\iota} = 0$; the general case can be reduced to this case simply by a 
translation $\zeta \mapsto \zeta - \zeta_{\iota}$. 
The local decomposition \eqref{eq:E_decomposition} implies that the $\zeta_{\iota}$-group is only affected by the 
restriction \eqref{eq:theta_decomposition}. 
For any fixed $\iota$ the $\zeta_{\iota}$-group further decomposes according to the eigenvalues of $B_{\iota}$. 
Now, according to the assumptions of the theorem the residue $B_{\iota}$ is regular 
(and not just its graded pieces for the parabolic filtration). 
Correspondingly, the vector bundle $\mathcal{E}_{\iota}$ further decomposes locally holomorphically into 
holomorphic subbundles over which $B_{\iota}$ has only one eigenvalue. 
It follows that for any eigenvalue $\lambda\in\C$ of $B_{\iota}$ we need to consider a Higgs field of the form 
$$
  \theta = \begin{pmatrix}
            \lambda & 1 & 0 & \cdots & 0 \\
            0 & \lambda & 1 & \cdots & 0 \\
            \vdots & \vdots & & \ddots & \vdots \\
            0 & 0 & 0 & \cdots & 1 \\
            0 & 0 & 0 & \cdots & \lambda 
           \end{pmatrix} \frac{\d z}z + O(z^{-2}) \d z.
$$
Now, in the generic case that the $(r,1)$-entry of the term $z^{-2} \d z$ does not vanish, we get exactly as in the 
proof of Theorem \ref{thm:st_phase1} Puiseux expansions for the eigenvalues $\zeta_j(z)$ of the form  
\begin{equation}\label{eq:logarithmic_spectral_Puiseux}
   \zeta_j = \sum_{k=r}^{\infty} \tilde{c}_k z^{-\frac kr},
\end{equation}
with $\tilde{c}_{r} = \lambda$. Now, for $\lambda \neq 0$ the converse relation reads as 
\begin{equation}\label{eq:logarithmic_converse_Puiseux}
  z_j = \sum_{k=-r}^{\infty} \tilde{d}_k \zeta^{\frac kr}
\end{equation}
with 
$$
  \tilde{d}_{-r} = \lambda \neq 0, \quad \tilde{d}_{1-r} \neq 0.
$$
From this point on, the proof follows verbatim the end of the proof of Theorem \ref{thm:st_phase1}. 
\end{proof}

\section{Computation of transformed parabolic weights}\label{sec:parabolic}

In this section we will compute the transform of the last piece of singularity parameters, namely that of the parabolic weights.  
Our argument will closely follow the one of \cite{Biq-Jar} building on estimates of \cite{Sz-these}. 

It will be more appropriate to formulate and prove the result on the Dolbeault side. Therefore, we start by explaining the parallel 
story of the transform of the corresponding irregular Higgs bundles. The details may be found in \cite{Sz-FM34}. 

As in Section \ref{sec:transformation}, we let $(\mathcal{E}, \theta)$ denote the stable irregular Higgs bundle corresponding to 
$(E, \nabla)$ under non-Abelian Hodge theory. We define sheaves $\mathcal{F}, \mathcal{G}$ as elementary modifications of 
$\mathcal{E}$ by formulas analogous to the ones of Section \ref{sec:dR}, except for replacing a holomorphic trivialization of 
$E$ by a holomorphic trivialization of $\mathcal{E}$, and replacing each parabolic weight $\beta$ and each eigenvalue of the 
residue $\mu$ by the corresponding values $\alpha$ and $\lambda$ under the relationship of Remark \ref{rem:Assumptions}. 
We then consider the twisted Dolbeault complex $\Dol_{\bullet}$ as follows 
\begin{equation}\label{eq:twisted_Dolbeault}
   \theta_{\zeta} = \theta \otimes s_{\infty} - \frac 12 \Id_{\mathcal{E}}\d z \otimes s_0 : \pi_1^* \mathcal{F} \to \pi_1^* \mathcal{G} \otimes 
   K_{\CP}(2\cdot z_0 + z_1+\cdots + z_n) \otimes \pi_2^*  \O_{\CPt}(1). 
\end{equation}
This is a direct analog of \eqref{eq:nabla-extension}. The holomorphic bundle underlying the transformed Higgs bundle is then defined as 
\begin{equation*}
   \widehat{\mathcal E} = \R^1 (\pi_2)_* \Dol_{\bullet}, 
\end{equation*}
which is a direct analogue of \eqref{eq:transformed_bundle}. 
The parabolic filtration on $\widehat{\mathcal E}$ is defined in Section 5 (specifically, Proposition 5.6) \cite{Sz-FM34} using a procedure similar to 
the one appearing in Section \ref{sec:transformed-parabolic} of this paper. 
Namely, for all $0 \leq  \alpha < 1$ we define suitable elementary modifications $\mathcal{F}_{\alpha}, \mathcal{G}_{\alpha}$ of $\mathcal{F}, \mathcal{G}$, 
and consider the weighted twisted Dolbeault complex $\Dol_{\alpha, \zeta}$, obtained by replacing the sheaves $\mathcal{F}, \mathcal{G}$ in 
\eqref{eq:twisted_Dolbeault} by the filtered sheaves $\mathcal{F}_{\alpha}, \mathcal{G}_{\alpha}$ (and restricting all morphisms to these subsheaves). 
The filtration on $\widehat{\mathcal E}$ is the defined as follows: 
\begin{equation*}
   \widehat{\mathcal E}_{\alpha} = \R^1 (\pi_2)_* \Dol_{\alpha, \bullet}.
\end{equation*}

We will use the notation 
$$
  \Gr_{\alpha} \widehat{\mathcal E} = \widehat{\mathcal E}_{\alpha}/ \widehat{\mathcal E}_{\alpha + \varepsilon} 
$$
for sufficiently small $\varepsilon> 0$, and a similar notation $\Gr_{\alpha} \mathcal{E}$. 
We will also denote by $\psi_i^0 \Gr_{\alpha} \widehat{\mathcal E}$ and $\psi_i^{\neq 0} \Gr_{\alpha} \widehat{\mathcal E}$ the generalized eigenspace of 
$\Gr_{\alpha} \res_{\zeta_i} \widehat{\theta}$ at a singular point $\zeta_i$ of the transformed object for the eigenvalue $0$ 
and the direct sum of the eigenspaces for all other eigenvalues respectively. 
When we put no subscript $i$, we mean the direct sum of these spaces for all singular points $\zeta_i$ (including $\zeta = \infty$). 

\begin{thm}\label{thm:parabolic_weights}
 Assume that the conditions of Theorem \ref{thm:st_phase1} hold. Then, near any parabolic point of the transformed 
 parabolic Higgs bundle $(\widehat{\mathcal E}, \widehat{\theta})$ and for any $\alpha \in (0,1)$ the parabolic weight 
 induced by the transformed harmonic metric $\htr$ on $\Gr_{\alpha} \widehat{\mathcal E}$ is $\alpha - 1$. 
 The parabolic weight induced by $\htr$ on $\Gr_0 \widehat{\mathcal E}$ is 
 \begin{itemize}
  \item $0$ on $\psi^0 \Gr_0 \widehat{\mathcal E}$ 
  \item $0$ on the weight $k \geq -1$ part of $\psi^{\neq 0} \Gr_0 \widehat{\mathcal E}$
  \item $-1$ on the weight $k < -1$ part of $\psi^{\neq 0} \Gr_0 \widehat{\mathcal E}$.
 \end{itemize}
\end{thm}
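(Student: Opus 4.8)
The plan is to establish the result on the Dolbeault side, following the strategy of \cite{Biq-Jar} and relying on the local harmonic-form estimates of \cite{Sz-these}. Fix a parabolic point $\zeta_0 \in \Pt \cup \{\infty\}$ of the transformed object and a local holomorphic coordinate $\sigma$ on $\CPt$ centred at $\zeta_0$. By the Hodge-theoretic description recalled in Section \ref{sec:Nahm}, a local holomorphic section of $\widehat{\mathcal E}$ near $\zeta_0$ is represented by a smooth family $\zeta \mapsto \hat f_\zeta = \hat f(z)\,\d z + \hat g(z)\,\d\bar z$ of $V$-valued $1$-forms annihilated by the twisted Dirac--Laplace operator $\Delta_\zeta$ of \eqref{eq:Dirac-Laplace}, and its transformed norm equals $\htr(\hat f_\zeta,\hat f_\zeta) = \int_{\C}\bigl(h(\hat f,\hat f) + h(\hat g,\hat g)\bigr)\,|\d z|^2$ by \eqref{eq:transformed_metric}. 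The parabolic weight $\gamma$ of $\htr$ on a given graded piece at $\zeta_0$ is, by definition, read off from the asymptotics $\htr(\hat s_\zeta,\hat s_\zeta) \sim |\sigma|^{2\gamma}$ (up to powers of $\log|\sigma|$) for a holomorphic section $\hat s$ generating that piece of the algebraically defined filtered bundle $\widehat{\mathcal E}_\alpha$ of Section \ref{sec:transformed-parabolic}. Thus the computation reduces to identifying the $\Delta_\zeta$-harmonic representative of each such generator and extracting the leading $\sigma$-asymptotics of its $L^2$-norm.

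First I would set up the concentration picture. As $\zeta \to \zeta_0$, the harmonic representatives of the relevant classes concentrate on $\CP$: near the finite singular point $z_1$ when $\zeta_0 = \infty \in \CPt$, and near $\infty \in \CP$ --- more precisely on the $\zeta_\iota$-eigenspace of the leading term $A$ of $\nabla$, cf.\ \eqref{eq:Dinfinity} --- when $\zeta_0 = \zeta_\iota \in \Pt$. In either case there is a local coordinate $u$ on $\CP$ centred at the concentration point in terms of which the relevant component of the harmonic form is concentrated at scale $|u| \sim |\sigma|$. On the concentration locus one substitutes the local normal forms fixed in Section \ref{sec:harmonic-bundles}, together with the corresponding model harmonic metrics: the $\beta_i^j$-graded piece has model norm $|u|^{2\beta_i^j}$ times a factor varying like a power of $\log|u|$ dictated by the nilpotent part $N_i^j$ and the weight filtration $W_{i,\bullet}^j$, and near $\infty$ one uses the Biquard--Boalch model of \cite{Biq-Boa}. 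Splitting $\int_{\C}$ into the contribution of a small disc around the concentration point --- of radius a fixed power of $|\sigma|$ --- and a remainder which, by the off-diagonal decay estimates of \cite{Sz-these}, is of strictly lower order in $|\sigma|$, reduces matters to an explicit model integral.

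Next I would evaluate these model integrals. For a generic generator of type $\f_i^s(\beta) = (z - z_i)^m\f_i^s$ from case (\ref{case:beta-shifting}) of Subsection \ref{ssec:transforming-parabolic} (and its Dolbeault analogue), the rescaling $u = |\sigma|\,v$ turns $\int h(\hat f,\hat f)\,|\d z|^2$ into $|\sigma|^{2(\alpha - 1)}$ times a convergent integral in $v$; the shift by $-1$ relative to the naive exponent $\alpha$ comes precisely from the Jacobian $|\d z|^2$ of the rescaling, that is, from the fact that one is measuring a harmonic $1$-form and not the holomorphic section itself. This gives the weight $\alpha - 1$ for every $\alpha \in (0,1)$. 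For the exceptional generators sitting at $\alpha = 0$ the same computation produces the three listed values. On $\psi^0\Gr_0\widehat{\mathcal E}$ one uses the untwisted generator $\f_i^s = \e_i^s$ with $\mu_i^s = 0$ of case (\ref{case:deletion}), for which Assumption \ref{assn:main} removes the logarithmic factor and the rescaled integral stays bounded, giving weight $0$. On the $k(s) \geq -1$ part of $\psi^{\neq 0}\Gr_0\widehat{\mathcal E}$ the $(z - z_i)$-twist built into the generator in that sub-case likewise produces a bounded rescaled integral, hence weight $0$. On the $k(s) < -1$ part the untwisted generator produces the extra Jacobian factor $|\sigma|^{-2}$ with no compensating growth, hence weight $-1$. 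Here the index $k$ of the weight filtration $W_{i,\bullet}^j$ governs the power of $\log|u|$ in the model metric, which is exactly what decides between the three regimes.

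The main obstacle is the passage from the algebraic hypercohomology generators of $\widehat{\mathcal E}_\alpha$ to genuine $\Delta_\zeta$-harmonic forms with sharp asymptotics uniform in $\zeta$: one must show that the difference between the naive candidate representative built from $\f_i^s(\beta)$ and the true harmonic representative (obtained by applying $1 - \hat{\pi}_{\zeta}$, cf.\ \eqref{eq:Dt}) contributes at strictly lower order in $|\sigma|$. This is where the genericity hypotheses of Theorem \ref{thm:st_phase1} enter: they exclude resonances among the quantities $\mu_i^s - \beta_i^{j(s)}$, which guarantees that the relevant Green's operator and the projection $\hat{\pi}_{\zeta}$ do not alter the leading order. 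Once this uniform control is in place --- which is precisely the content, in the present parabolic situation, of the estimates carried out in \cite{Biq-Jar} and \cite{Sz-these} --- the weights are read off from the model integrals above, and the theorem follows.
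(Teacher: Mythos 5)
Your local analysis is essentially the right half of the argument: the cut-off representatives supported on annuli of radius $\sim\varepsilon/|\zeta|$ around the spectral points, the rescaling $w=\zeta z$, and the resulting model integrals are exactly what the paper uses (Lemma \ref{lem:par_wt}). But note what that computation actually delivers: the $L^2$-norm of \emph{some} Dolbeault representative, which by Hodge theory only gives an \emph{upper} bound on the norm of the harmonic representative, hence only a \emph{lower} bound on each parabolic weight. The step you flag as ``the main obstacle'' --- showing that applying $1-\hat{\pi}_{\zeta}$ to the candidate does not change the leading order, i.e.\ the matching \emph{lower} bound on the harmonic norm --- is a genuine gap in your proposal. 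You assert it follows from the non-resonance hypotheses and the estimates of \cite{Sz-these}, \cite{Biq-Jar}, but those references do not prove such sharp two-sided asymptotics either; the Green's operator degenerates precisely as $\zeta\to\zeta_0$ (the concentration regime produces small eigenvalues of $\Delta_\zeta$), so a uniform ``lower-order correction'' estimate is exactly what one cannot get cheaply.

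The paper closes this gap by a global mechanism that is entirely absent from your proposal. One computes $\rank(\widehat{\mathcal E})$ and $\deg(\widehat{\mathcal E})$ via Grothendieck--Hirzebruch--Riemann--Roch applied to the complex \eqref{eq:twisted_Dolbeault} (Lemma \ref{lem:GRR}), identifies the graded pieces $\Gr_\alpha\widehat{\mathcal E}$ with those of $\mathcal E$ as in \eqref{eq:graded_dimensions1}--\eqref{eq:graded_dimensions2}, and then uses the fact that the existence of the harmonic metric forces $\deg_{\para}(\widehat{\mathcal E}_\bullet)=0=\deg_{\para}(\mathcal E_\bullet)$. Substituting the one-sided bounds from the local analysis into the parabolic degree yields a chain $0\geq\deg_{\para}(\widehat{\mathcal E}_\bullet)\geq 0$, whose inequalities must therefore all be equalities; the weights are then pinned down without ever estimating the harmonic representative from below. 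If you want to salvage your direct approach you would have to prove uniform control of $\hat{\pi}_\zeta$ near $\zeta_0$, which is substantially harder than the index-theoretic bookkeeping; otherwise you should add the GRR computation and the parabolic-degree argument, at which point your local estimates suffice as stated.
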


\begin{proof}
The argument goes as follows. We will use explicit representatives of Dolbeault hypercohomology classes similar to the ones 
appearing in \cite{Sz-these} to show: 
\begin{lem}\label{lem:par_wt}
For any $\alpha \in [0,1)$ the parabolic weights induced by the transformed harmonic metric $\htr$ on $\Gr_{\alpha} \widehat{\mathcal E}$ are 
bounded from below by the quantities stated in Theorem \ref{thm:parabolic_weights}.  
\end{lem}

Then, we observe that according to Theorem \ref{thm:st_phase1} for all $\alpha \in (0,1)$ there exist isomorphisms of $\C$ vector spaces
\begin{align}
   \Gr_{\alpha} \widehat{\mathcal E} & = \Gr_{\alpha} {\mathcal E} \label{eq:graded_dimensions1} \\
   \psi^{\neq 0} \Gr_0 \widehat{\mathcal E} & = \psi^{\neq 0} \Gr_{0} {\mathcal E} \label{eq:graded_dimensions2}
\end{align}
commuting with the natural residue maps induced on these vector spaces (up to a sign). 
More precisely, all these vector spaces decompose into a direct sum according to their support for instance as 
$$
  \Gr_{\alpha} {\mathcal E} = \bigoplus_{i=0}^n \Gr_{\alpha} {\mathcal E}_{z_i}.
$$
where the subscript $z_i$ refers to the subsheaf supported at the point $z_i$, and if we denote by $\zeta_0= \infty \in \CPt$ 
then for all $\alpha \in [0,1)$ we have isomorphisms 
$$
  \Gr_{\alpha} \widehat{\mathcal E}_{\zeta_0} \cong  \bigoplus_{i=1}^n \Gr_{\alpha} {\mathcal E}_{z_i}, 
$$
intertwining the action of the natural residue maps $\res_{\zeta = \infty} (\widehat{\theta})$ and $-\res_{z_i}(\theta)$. 
Similarly, there exist isomorphisms 
$$
  \psi^{\neq 0} \Gr_0 \widehat{\mathcal E}_{\zeta_0} = \bigoplus_{i=1}^n \psi^{\neq 0} \Gr_{0} {\mathcal E}_{z_i} 
$$
intertwining the action of the natural residue maps. 
In particular, the weight filtrations induced by the residue on both sides match up. 
Next, denoting by $\hat{r}$ the rank of $\widehat{\mathcal{E}}$, we will invoke the Grothendieck--Hirzebruch--Riemann--Roch theorem to show: 
\begin{lem}\label{lem:GRR}
The rank and degree of $\widehat{\mathcal E}$ are given by the formulae 
\begin{align*}
  \hat{r} = \rank (\widehat{\mathcal E} ) & = \sum_{i=1}^n \left( \dim_{\C} \psi^{\neq 0} \Gr_{0} {\mathcal E}_{z_i} 
		      + \sum_{\alpha \in (0,1)} \Gr_{\alpha} {\mathcal E}_{z_i} \right) , \\
  \deg (\widehat{\mathcal E} ) & = \deg ( \mathcal{F} ) + r + \hat{r}. 
\end{align*}
\end{lem}
Finally, we use the fact that the existence of a harmonic metric implies that the parabolic degree for the parabolic weights induced by $h$ and 
$\htr$ must be zero: 
$$
  \deg_{\para} ( \widehat{\mathcal E}_{\bullet}) = 0 = \deg_{\para} ( \mathcal{E}_{\bullet}) .
$$

The last piece of notation to introduce is 
$$
  W^{<-1} \psi^{\neq 0} \Gr_{0} {\mathcal E}  ,\quad  W^{\geq -1} \psi^{\neq 0} \Gr_{0} {\mathcal E}
$$
for the subspace (respectively quotient space) of the vector space $\psi^{\neq 0} \Gr_{0} {\mathcal E}$ composed of vectors with weight $k < -1$ 
(respectively $k\geq -1$) with respect to the weight filtration $W$. 
Using all these ingredients, a simple argument gives the proof. 
Indeed, denoting by $\hat{\alpha}$ the smallest parabolic weight induced by $\htr$ on $\Gr_{\alpha} \widehat{\mathcal E}$ we have 
\begin{align}
 0 = & \deg_{\para} ( \widehat{\mathcal E}_{\bullet}) \notag \\
   \geq & \deg (\widehat{\mathcal E} ) + \sum_{\alpha \in [0,1)} \hat{\alpha} \dim_{\C} \Gr_{\alpha} \widehat{\mathcal E} \notag \\
   \geq & \deg ({\mathcal F}) + r + \hat{r} 
   - \dim_{\C} W^{<-1} \psi^{\neq 0} \Gr_{0} {\mathcal E} \label{eq:par_ineq} \\
   & + \sum_{\alpha \in (0,1)} (\alpha - 1 ) \dim_{\C} \Gr_{\alpha} {\mathcal E} \notag 
\end{align}
because of Lemmas \ref{lem:par_wt}, \ref{lem:GRR}  and formulas \eqref{eq:graded_dimensions1}, \eqref{eq:graded_dimensions2}. 
Looking at the definition of $F$ we see that 
$$
  \deg ({\mathcal F}) = \deg ({\mathcal E}) - \dim_{\C} W^{\geq -1} \psi^{\neq 0} \Gr_{0} {\mathcal E}  
$$
Plugging this formula into \eqref{eq:par_ineq} turns it into 
$$
 \deg ({\mathcal E}) + r + \hat{r} 
 - \dim_{\C} \psi^{\neq 0} \Gr_{0} {\mathcal E}  
 + \sum_{\alpha \in (0,1)} (\alpha - 1 ) \dim_{\C} \Gr_{\alpha} {\mathcal E}.
$$
According to Theorem \ref{thm:st_phase1} and Assumption \ref{assn:main} we then have 
$$
  \sum_{\alpha \in (0,1)} \dim_{\C} \Gr_{\alpha} {\mathcal E}_{z_0} = r. 
$$
Using this and the formula for $\hat{r}$ given in Lemma \ref{lem:GRR} expression \eqref{eq:par_ineq} simpifies as 
$$
  \deg ({\mathcal E}) + \sum_{\alpha \in (0,1)} \alpha \dim_{\C} \Gr_{\alpha} {\mathcal E} = \deg_{\para} ( \mathcal{E}_{\bullet}) = 0. 
$$
We have found 
$$
  0 \geq \deg_{\para} ( \widehat{\mathcal E}_{\bullet}) \geq 0
$$
by replacing the parabolic weights induced by $\htr$ by their lower bounds given in Lemma \ref{lem:par_wt}. 
As the coefficients of these weights in these formulae are all positive, it follows that all the inequalities 
of Lemma \ref{lem:par_wt} must in reality be equalities. This finishes the proof of Theorem \ref{thm:parabolic_weights}.  


There only remains to prove Lemmas \ref{lem:par_wt} and \ref{lem:GRR}. 

\begin{proof}[Proof of Lemma \ref{lem:GRR}]
Let us introduce the notations 
$$
  f = \deg (\mathcal{F}), \quad g = \deg (\mathcal{G}), 
$$
and let us denote by $H = (\infty )$ and $\hat{H} = (\infty )$ the hyperplane classes of $\CP$ and $\CPt$ respectively. 
Applying the Grothendieck--Hirzebruch index formula to the sheaf complex \eqref{eq:twisted_Dolbeault}, we get that the Chern character of 
$\widehat{\mathcal E}$ is given by 
\begin{align*}
 ch(\widehat{\mathcal E} ) = & \left( \mbox{Todd} (\CP) \cup [ ch(\pi_1^*{\mathcal G} 
    \otimes K_{\CP}(2\cdot z_0 + z_1+\cdots + z_n) \otimes \pi_2^*  \O_{\CPt}(1)) - ch (\pi_1^* {\mathcal F}) ] \right) / \CP \\ 
  = & \left( (1+H) \cup  [ ( r + gH) (1 + n H)(1+\hat{H}) - (r + f H) ] \right) / \CP \\
  = & (g + rn -f) + (g + r(n+1)) \hat{H} 
\end{align*}
By Assumption \ref{assn:main} (\ref{assn:main1}), for all $i \in \{ 1, \ldots , n \}$ and $s \in \{ 1, \ldots , r \}$ the conditions $\beta_i^{j(s)} = 0 = \mu_i^s$ imply 
that the vector $e_i^s$ is annihilated by $\res_{z_i}^j (\nabla )$, in particular we have $k_i(s) = 0$. 
Taking into account the definition of ${\mathcal F}$ and of ${\mathcal G}$ given in Section \ref{sec:dR} we infer that 
$$
  \hat{r} = g + rn - f = \sum_{i=1}^n \# \{ s : \beta_i^{j(s)} \neq 0 \mbox{ or } \mu_i^s \neq 0 \}
$$
and that 
$$
  \deg (\Et ) = \hat{r} + f + r. 
$$
\end{proof}

\begin{proof}[Proof of Lemma \ref{lem:par_wt}]

We first treat the parabolic weights $\alpha > 0$ of the transformed Higgs bundle at $\zeta = \infty$. 
Then, as $\zeta \to \infty$, all the spectral points $z_j (\zeta)$ converge to one of the points of $P$. 
For ease of notation we assume that this logarithmic point is $0$ as in \eqref{eq:converse_Puiseux}. 
Recall that in \eqref{eq:converse_Puiseux} $d_r = \lambda\neq 0$ stands for a non-vanishing 
eigenvalue of $\res_{z=0}(\theta)$.
Let now $\varepsilon > 0$ be chosen so that for all non-vanishing eigenvalue $\lambda$ of $\res_{z=0}(\theta)$ we have 
$$
  2 \varepsilon < | \lambda|, 
$$
and for all pairs of distinct non-vanishing eigenvalues $\lambda_1, \lambda_2$ of $\res_{z=0}(\theta)$ we have 
$$
  3 \varepsilon < | \lambda_1 - \lambda_2 |. 
$$
Let us fix a smooth function 
\begin{equation}\label{eq:chi}
   \chi : \C \to [0,1]
\end{equation}
such that 
\begin{itemize}
 \item the support of $\mbox{d} \chi$ is contained in the annulus $1/3 < |w| < 2/3$
 \item $\chi$ is identically $1$ on the disc $|w| \leq 1/3$
 \item $\chi$ is identically $0$ on the complement of the disc $|w| < 2/3$.
\end{itemize}
Consider a local section $\varsigma$ of ${\mathcal E}$ near $z = 0$ such that $\varsigma (0) \in \Gr_{\alpha} {\mathcal E}|_0$. 
Then a local holomorphic section $\hat{\varsigma}$ of $\widehat{\mathcal{E}}_{\alpha}$ near $\zeta = \infty$ is represented by the class 
determined by $\varsigma(z_j (\zeta) ) \zeta$ in the corresponding stalk $M(z_j(\zeta) ,\zeta )$ of the cokernel sheaf $M$ 
introduced in the proof of Theorem \ref{thm:st_phase1}. Here, the factor $\zeta$ is a local trivialization of the sheaf $\O_{\CPt}(1)$ 
and $z_j (\zeta)$ are the spectral points for $\zeta$, having Puiseux series \eqref{eq:converse_Puiseux} with $d_r = \lambda$ 
for some eigenvalue $\lambda$ of $\Gr^{\alpha} \res_{z=0} (\theta )$. 
Fix one such eigenvalue $\lambda$; by assumption, we have $\lambda\neq 0$. 
By \eqref{eq:twisted_Dolbeault}, for fixed $\zeta$ the fiber of the holomorphic vector bundle underlying the transformed Higgs bundle 
has a description as the Dolbeault hypercohomology space 
$$
  \widehat{\mathcal E}|_{\zeta} = \H^1 (\Dol_{\zeta}).
$$

A Dolbeault representative of $\hat{\varsigma}$ can then be given as follows. 
Consider the smooth $(1,0)$-form with values in $V$ 
\begin{equation}\label{eq:Section_v}
  v(z, \zeta ) \mbox{d} z = \chi \left( \varepsilon^{-1} |\zeta | (z - \lambda \zeta^{-1}) \right)  \varsigma (z) \zeta \frac{\mbox{d} z}z. 
\end{equation}
Then, by the choice of $\varepsilon$ this section is supported away from $z = 0$ and away from 
the spectral points $z(\zeta )$ having Puiseux series \eqref{eq:converse_Puiseux} with $d_r \neq \lambda$. 
Furthermore, we see from the properties of $\chi$ that the support of $\bar{\partial}^{\mathcal{E}}v(z, \zeta )$
is contained in the annulus 
\begin{equation}\label{eq:annulus}
   A = \left\{ z | \; \frac{\varepsilon}{3|\zeta |} < |z - \lambda \zeta^{-1} | < 2 \frac{\varepsilon}{3|\zeta |} \right\}.
\end{equation}
In particular, by the choice of $\varepsilon$ the twisted Higgs field $\theta_{\zeta}$ 
is invertible for $(1,1)$-forms supported over this annulus: for any fixed $\zeta$ there exists a smooth $(0,1)$-form 
$t(z, \zeta ) \mbox{d}\bar{z}$ with values in $V$ and supported in \eqref{eq:annulus} such that 
\begin{equation}\label{eq:Section_t}
  \bar{\partial}^{\mathcal{E}} v(z) \mbox{d} z + \theta_{\zeta} t(z ) \mbox{d}\bar{z} = 0. 
\end{equation}
The $V$-valued $1$-form 
\begin{equation}\label{eq:Dolbeault_representative}
   v(z, \zeta ) \mbox{d} z + t(z, \zeta ) \mbox{d}\bar{z}
\end{equation}
then determines a cocycle of the twisted Dolbeault complex \eqref{eq:twisted_Dolbeault}, 
whose class in hypercohomology coincides with the class induced by $\hat{\varsigma}$. 

According to \eqref{eq:transformed_metric}, the norm of $\hat{\varsigma}$ is given by the 
$L^2$-norm of the harmonic representative, i.e. for the representative in the kernel of the 
Laplace operator \eqref{eq:Dirac-Laplace}. It follows from elementary Hodge theory that the $L^2$-norm of 
any Dolbeault representative gives an upper bound of the $L^2$-norm of the harmonic representative. 
In the rest of the proof we will give an upper bound for the $L^2$-norm of the Dolbeault representatives 
\eqref{eq:Dolbeault_representative}. 

Because of the choice of $\varepsilon$, the support of $v$ (and consequently the annulus $A$ given in \eqref{eq:annulus}) is a subset of the larger annulus 
\begin{equation}\label{eq:larger_annulus}
   \tilde{A} = \left\{ z | \; \frac{\lambda}{2|\zeta |} < |z | < \frac{2\lambda}{|\zeta |} \right\}, 
\end{equation}
where by compatibility of $h$ with the parabolic structure we have estimates of the form 
\begin{equation}\label{eq:sigma_bound}
   |\zeta |^{-2\alpha } \cdot P\left( (\log |\zeta |)^{-1} \right) \leq |\varsigma (z )|^2_h  \leq  |\zeta |^{-2\alpha } \cdot P(\log |\zeta |)
\end{equation}
for some polynomial $P$. 
We then derive from \eqref{eq:Section_v}, \eqref{eq:sigma_bound} and $\Vert \chi \Vert_{L^{\infty} (\C )} = 1$ the estimate 
\begin{align*}
  \int_{\C} |v(z, \zeta)|^2_h & = \int_{\tilde{A}} |v(z, \zeta)|^2_h \\
  & \leq \mbox{Area}(\tilde{A}) \cdot \Vert \chi \Vert^2_{L^{\infty} (\C )} \cdot \Vert \varsigma (z ) \Vert^2_{L_h^{\infty} (\tilde{A})} \cdot |\zeta |^2 
  \cdot \left\Vert \frac 1z \right\Vert^2_{L^{\infty} (\tilde{A})} \\
  & \leq K |\zeta |^{-2} \cdot 1 \cdot |\zeta |^{-2\alpha } \cdot P(\log |\zeta |) \cdot |\zeta |^2 \cdot |\zeta |^2 \\
  & = |\zeta |^{2-2\alpha } \cdot R(\log |\zeta |)
\end{align*}
for some polynomial $R$. 
Let us come to an estimate of the $L^2_h$-norm of the section $t(z, \zeta)$. 
For all $\zeta$ introduce the rescaled variable 
\begin{equation}\label{eq:w}
 z \mapsto w_{\zeta}(z) = \zeta z,
\end{equation}
equivalently 
$$
  z_{\zeta} (w) = \frac w{\zeta}. 
$$
Then we have 
$$
  z_{\zeta}^*(\theta_{\zeta} ) = z_{\zeta}^*\theta - \frac{\d w}2. 
$$
A trivialization of $\mathcal{E}$ over $\C$ identifies all vector bundles $z_{\zeta}^* \mathcal{E}$ with the trivial bundle. 
Because $\theta$ has a logarithmic singularity at $z=0$, it follows that for all $\zeta$ the pull-back 
$$
  z_{\zeta}^*\theta
$$
is a Higgs field over the trivial bundle with logarithmic singularity at $w = 0$. 
Moreover, it is easy to see that as $\zeta \to \infty$ these logarithmic Higgs fields 
converge to a logarithmic Higgs field of the form 
$$
  \frac Mw \d w
$$
for some constant endomorphism $M$ over any compact domain $K\subset \C \setminus \{ 0 \}$ of the $w$-line. 
The series \eqref{eq:converse_Puiseux} shows that the image under \eqref{eq:w} of the spectral points $z_j(\zeta )$ converge to $\lambda$.  
The image of the annulus $A$ under \eqref{eq:w} is 
$$
  B = \left\{ w | \; \frac{\varepsilon}{3} < |w - \lambda | < 2 \frac{\varepsilon}{3} \right\}
$$
independently of $\zeta$. 
It follows from the choice of $\varepsilon$ that there exists a constant $K>0$ such that for all $\zeta$ of large enough absolute value and all $w\in B$, the eigenvalues of 
$$
  z_{\zeta}^*(\theta_{\zeta} ) (w \partial_w) 
$$
are bounded from below by $K^{-1}$. 
Equivalently, this states that for all $|\zeta | > R$ and all $z \in A$ the eigenvalues of the inverse of 
$$
  \theta_{\zeta} (z \partial_z) 
$$
are all bounded from above by $K$. 
According to the definitions \eqref{eq:Section_t} and \eqref{eq:Section_v} we have 
$$
  t(z, \zeta ) \mbox{d}\bar{z} = - (\theta_{\zeta}  (z \partial_z))^{-1}  \left( \bar{\partial} \chi \left( \varepsilon^{-1} |\zeta | (z - \lambda \zeta^{-1}) \right) \right) \varsigma (z) \zeta .
$$
Finally, we have 
$$
  \left\Vert \frac{\partial}{\partial \bar{z}} \chi \left( \varepsilon^{-1} |\zeta | (z - \lambda \zeta^{-1}) \right) \right\Vert_{L^{\infty} (\C )} \leq K'' |\zeta |. 
$$
We infer that over $A$ we have a bound 
$$
  | t(z, \zeta ) \mbox{d}\bar{z} |_h^2 \leq K \cdot K'' \cdot  |\zeta |^2 \cdot |\zeta |^{-2\alpha } \cdot P(\log |\zeta |) \cdot |\zeta |^2. 
$$
Integrating this over $\C$ we get 
\begin{align*}
  \int_{\C} | t(z, \zeta ) |^2_h & =  \int_A  | t(z, \zeta ) |^2_h \\
    & \leq K \cdot K'' \cdot \mbox{Area} (A) \cdot |\zeta |^{4-2\alpha } \cdot P(\log |\zeta |)  \\ 
  & \leq |\zeta |^{2 - 2\alpha } Q(\log |\zeta |)
\end{align*}
for some polynomial $Q$. This finishes the proof in the case of $\alpha > 0$. 

The proof in the case $\alpha = 0$ follows exactly the same argument. The difference between the cases 
according to whether $k\geq -1$ or $k < -1$ is due to the definition of the section $v$. Indeed, in both cases 
$v$ is defined by a section $\varsigma$ of $\mathcal{G}$ exactly as in the case $\alpha > 0$ above, and local 
sections of $\mathcal{G}$ have parabolic weight $0$ if $k < -1$ and parabolic weight $1$ if $k\geq -1$. 
We leave it to the reader to fill out the details. 

Let us now come to the case of a logarithmic point $\zeta_j$ of $(\widehat{\mathcal E}, \widehat{\theta})$.  
For ease of notation, we again assume $\zeta_j = 0$. 
The argument is similar to the case of $\infty \in \CPt$ treated above, except for the centers, scales and norms of the representatives. 
Consider a local section $\varsigma$ of $\mathcal{E}$ near $z = \infty$ such that $\varsigma (\infty ) \in \Gr_{\alpha} \mathcal{E}|_{\infty}$.  
Then a local holomorphic section $\hat{\varsigma}$ of $\widehat{\mathcal{E}}_{\alpha}$ near $\zeta = 0$ is represented by the class 
determined by $\varsigma(z_j (\zeta) )$ in the stalk $M(z_j(\zeta) ,\zeta )$ of $M$, where $z_j (\zeta)$ has the expansion 
\eqref{eq:logarithmic_converse_Puiseux}. 
Let now $\varepsilon > 0$ be chosen so that for all non-vanishing eigenvalue $\lambda$ of $\res_{z=\infty }(\theta)$ we have 
$$
  2 \varepsilon < | \lambda|, 
$$
and for all pairs of distinct non-vanishing eigenvalues $\lambda_1, \lambda_2$ of $\res_{z=0}(\theta)$ we have 
$$
  3 \varepsilon < | \lambda_1 - \lambda_2 |. 
$$
We will make use of the function $\chi$ chosen in \eqref{eq:chi}. 
Then, in order to describe a convenient Dolbeault representative of $\hat{\varsigma}$ we first define the $1$-form valued in $V$ given by 
\begin{equation*}
  v(z, \zeta ) \mbox{d} z = \chi \left( \varepsilon^{-1} |\zeta | (z - \lambda \zeta^{-1}) \right)  \varsigma (z) \mbox{d} z. 
\end{equation*}
Again, by the choice of $\varepsilon$ for small enough $|\zeta |$ this section is supported away from any fixed compact set $K \subset \C$ 
and away from the spectral points having Puiseux series \eqref{eq:logarithmic_converse_Puiseux} with $d_r \neq \lambda$. 
Moreover, by the same argument as in the case of $\zeta = \infty$ explained previously, the support of 
$\bar{\partial}^{\mathcal{E}}v(z, \zeta ) \mbox{d} z$ is contained in the annulus 
\begin{equation}\label{eq:logarithmic_annulus}
   C = \left\{ z | \; \frac{\varepsilon}{3|\zeta |} < |z - \lambda \zeta^{-1} | < 2 \frac{\varepsilon}{3|\zeta |} \right\}.
\end{equation}
It follows again that for any fixed $\zeta$ there exists a smooth $(0,1)$-form $t(z, \zeta ) \mbox{d}\bar{z}$ 
with values in $V$ and supported in \eqref{eq:logarithmic_annulus} such that 
\begin{equation*}
  \bar{\partial}^{\mathcal{E}} v(z) \mbox{d} z + \theta_{\zeta} t(z ) \mbox{d}\bar{z} = 0. 
\end{equation*}
The $V$-valued $1$-form 
\begin{equation*}
   v(z, \zeta ) \mbox{d} z + t(z, \zeta ) \mbox{d}\bar{z}
\end{equation*}
then provides the desired Dolbeault representatives of $\hat{\varsigma}(\zeta )$. 
We merely need to show that the $L^2$-norm of these representatives is bounded from above by
$$
  |\zeta |^{\alpha -1} R(|\log |\zeta ||)
$$
for some polynomial $R$. 
Let us first treat the $L^2$-norm of $v(z, \zeta ) \mbox{d} z$: over $C$ defined in \eqref{eq:logarithmic_annulus} 
by compatibility of $h$ with the parabolic structure we have a bound 
$$
  |\varsigma (z)| \leq |z|^{-\alpha} P(|\log |z||)
$$
for some polynomial $P$ and also a bound 
$$
  |z| \leq K |\zeta |^{-1} 
$$
for some constant $K>0$. Furthermore, the area of the support of $v$ is bounded from above by 
$$
  K' |\zeta |^{-2}.
$$
Finally, the norm of $\d z$ with respect to the Euclidean metric is a constant. Putting these facts together 
we infer the estimate
$$
  \int_{\C} | v(z, \zeta ) \mbox{d} z |_h^2 \leq |\zeta |^{2\alpha -2} R(|\log |\zeta ||).
$$
Let us turn to the section $t(z, \zeta)$. We again make use of the homotethy \eqref{eq:w} for any fixed $\zeta$, with the 
difference that this time we let $\zeta \to 0$. 
We again have 
$$
  z_{\zeta}^*(\theta_{\zeta} ) = z_{\zeta}^*\theta - \frac{\d w}2, 
$$
a Higgs field over the trivial bundle with logarithmic pole at $w = \infty$. 
We again have 
$$
  w_{\zeta}(z_j(\zeta )) \to \lambda
$$
as $\zeta \to 0$. 
The image of the annulus $C$ \eqref{eq:logarithmic_annulus} under \eqref{eq:w} is 
$$
  \left\{ w | \; \frac{\varepsilon}{3} < |w - \lambda | < 2 \frac{\varepsilon}{3} \right\}
$$
independently of $\zeta$. 
Over this latter annulus for $|\zeta |$ sufficiently small the eigenvalues of 
$$
  z_{\zeta}^*(\theta_{\zeta} ) (w \partial_w) 
$$
are bounded from below by $K^{-1}$ for some $K>0$. 
We have 
$$
  t(z, \zeta ) \mbox{d}\bar{z} = - (\theta_{\zeta}  (z \partial_z))^{-1}  \left( \bar{\partial} \chi \left( \varepsilon^{-1} |\zeta | (z - \lambda \zeta^{-1}) \right) \right) \varsigma (z) z 
$$
and 
$$
  \left\Vert \frac{\partial}{\partial \bar{z}} \chi \left( \varepsilon^{-1} |\zeta | (z - \lambda \zeta^{-1}) \right) \right\Vert_{L^{\infty} (\C )} \leq K'' |\zeta |. 
$$
We find 
$$
  | t(z, \zeta ) \mbox{d}\bar{z} |_h^2 \leq K''' \cdot  |\zeta |^2 \cdot |\zeta |^{2\alpha } \cdot P(\log |\zeta |) \cdot |\zeta |^{-2}  
$$
for all $z$ in the annulus $C$ \eqref{eq:logarithmic_annulus}. Integrating this over $\C$ we get 
\begin{align*}
  \int_{\C} | t(z, \zeta ) |^2_h & = \int_{C} | t(z, \zeta ) |^2_h \\
  & \leq K''' \cdot \mbox{Area} (C ) \cdot |\zeta |^{2\alpha } \cdot P(\log |\zeta |)  \\ 
  & \leq |\zeta |^{2\alpha -2} Q(\log |\zeta |)
\end{align*}
for some polynomial $Q$. This finishes the proof.

\end{proof}

\end{proof}

\bibliography{NS50}

\begin{thebibliography}{10}

\bibitem{A-Sz}
K{\"u}r\c{s}at {Aker} and Szil\'ard {Szab\'o}.
\newblock Algebraic {N}ahm transform for parabolic {H}iggs bundles on
  $\mathbf{P}^1$.
\newblock {\em Geometry and Topology}, 18(5):2487--2545, 2014.

\bibitem{Biq-Boa}
Olivier {Biquard} and Philip {Boalch}.
\newblock Wild non-abelian {H}odge theory on curves.
\newblock {\em Compos. Math.}, 140(1):179--204, 2004.

\bibitem{Biq-Jar}
Olivier {Biquard} and Marcos {Jardim}.
\newblock Asymptotic behaviour and the moduli space of doubly-periodic
  instantons.
\newblock {\em Journal of the European Mathematical Society}, 3(4):335--375,
  2001.

\bibitem{Hit}
Nigel~J. {Hitchin}.
\newblock The self-duality equations on a {R}iemann surface.
\newblock {\em Proc. London Math. Soc.}, 55(3):59--126, 1987.

\bibitem{Katz}
Nicholas {Katz}.
\newblock {\em Exponential sums and differential equations}, volume 124 of {\em
  Annals of Mathematics Studies}.
\newblock Princeton University Press, 1990.

\bibitem{KL}
Nicholas {Katz} and G\'erard {Laumon}.
\newblock Transformation de {Fourier} et majoration de sommes exponentielles.
\newblock {\em Publ. Math., Inst. Hautes \'Etud. Sci.}, 62:145--202, 1985.

\bibitem{Mal}
Bernard {Malgrange}.
\newblock {\em \'Equations diff\'erentielles \`a coefficients polynomiaux},
  volume~96 of {\em Progress in Mathematics}.
\newblock {Birkh\"auser}, 1991.

\bibitem{Moc}
T.~Mochizuki.
\newblock {\em Wild harmonic bundles and wild pure twistor $D$-modules}, volume
  340 of {\em Ast\'erisque}.
\newblock Soci\'et\'e Math\'ematique de France, 2011.

\bibitem{DPS}
T.~Pantev R.~Donagi and C.~Simpson.
\newblock Direct images in non-abelian {Hodge} theory.
\newblock arXiv:1612.06388.

\bibitem{Sab-hi}
Claude {Sabbah}.
\newblock Harmonic metrics and connections with irregular singularities.
\newblock {\em Ann. Inst. Fourier Grenoble}, 49:1265--1291, 1999.

\bibitem{Sab-twistor}
Claude {Sabbah}.
\newblock {\em Polarizable twistor $D$-modules}, volume 300 of {\em
  Ast\'erisque}.
\newblock Soci\'et\'e Math\'ematique de France, 2005.

\bibitem{Sab-isomonodromy}
Claude {Sabbah}.
\newblock {\em Isomonodromic deformations and {Frobenius} manifolds}.
\newblock Universitext. Springer, 2007.

\bibitem{Sim}
Carlos~T. {Simpson}.
\newblock Harmonic bundles on noncompact curves.
\newblock {\em J. Amer. Math. Soc.}, 3(3):713--770, 1990.

\bibitem{Sz-BNR}
Szil\'ard {Szab\'o}.
\newblock The birational geometry of irregular {Higgs} bundles.
\newblock arXiv:1502.02003, submitted for publication.

\bibitem{Sz-these}
Szil\'ard {Szab\'o}.
\newblock Nahm transform for integrable connections on the {Riemann} sphere.
\newblock {\em M\'emoires de la Soci\'et\'e Math\'ematique de France},
  110:1--114, 2007.

\bibitem{Sz-laplace}
Szil\'ard {Szab\'o}.
\newblock Nahm transform and parabolic minimal {Laplace} transform.
\newblock {\em Journal of Geometry and Physics}, 62:2241--2258, 2012.

\bibitem{Sz-plancherel}
Szil\'ard {Szab\'o}.
\newblock The {Plancherel theorem for Fourier--Laplace--Nahm} transform for
  connections on the projective line.
\newblock {\em Comm. Math. Phys.}, 338(2):753--769, 2015.

\bibitem{Sz-FM34}
Szil\'ard {Szab\'o}.
\newblock Nahm transformation for parabolic {Higgs} bundles on the projective
  line --- case of non-semisimple residues.
\newblock {\em Journal of Geometry and Physics}, 2016.

\end{thebibliography}
\bibliographystyle{plain}

\end{document}